\documentclass[11pt]{amsart}
\usepackage[latin1]{inputenc}
\usepackage[english]{babel}
\usepackage{amsmath,amssymb,amsthm,amsfonts}

\usepackage{tikz-cd}

\setcounter{MaxMatrixCols}{10}

% without an Oxford comma in the title
\makeatletter
\def\author@andify{
	\nxandlist {\unskip ,\penalty-1 \space\ignorespaces}
	{\unskip {} \@@and~}
	{\unskip \penalty-2 \space \@@and~}
}
\makeatother

\makeatletter
\@namedef{subjclassname@2010}{ \textup{2010} Mathematics Subject Classification}
\makeatother
\voffset-2.5 cm\hoffset -1.5 cm\textwidth 16 cm\textheight 23 cm\pagestyle{plain}
\theoremstyle{definition}
\newtheorem{theorem}{Theorem}[section]
\theoremstyle{definition}
\newtheorem{lemma}[theorem]{Lemma}
\theoremstyle{definition}
\newtheorem{corollary}[theorem]{Corollary}
\theoremstyle{definition}
\newtheorem{proposition}[theorem]{Proposition}
\theoremstyle{definition}

\theoremstyle{definition}
\newtheorem{remark}[theorem]{Remark}
\theoremstyle{definition}
\newtheorem{example}[theorem]{Example}
\theoremstyle{definition}

\numberwithin{equation}{section}

\numberwithin{equation}{section}
\DeclareMathOperator{\supp}{supp}

\DeclareMathOperator*{\esssup}{esssup}

\DeclareMathOperator*{\interior}{int}

\newcommand{\norm}[1]{\left\lVert#1\right\rVert}
\newcommand{\vertiii}[1]{{\left\vert\kern-0.25ex\left\vert\kern-0.25ex\left\vert #1 
    \right\vert\kern-0.25ex\right\vert\kern-0.25ex\right\vert}}
\newcommand{\abs}[1]{\left\lvert#1\right\rvert}

\newcommand{\tandori}[1]{\widetilde{\mathit{#1}}}

\begin{document}
\title{Isomorphic and isometric structure of the optimal domains for Hardy-type operators}

\author{Tomasz Kiwerski*}
\address[Tomasz Kiwerski]{Faculty of Mathematics, Computer Science and Econometrics, University of Zielona G\'ora,
		prof. Z. Szafrana 4a, 65-516 Zielona G\'ora, Poland and \newline \indent
		Institute of Mathematics, Faculty of Electrical Engineering, Pozna\'{n} University of Technology,
		Piotrowo 3A, 60-965 Pozna\'{n}, Poland}
\email{tomasz.kiwerski@gmail.com}

\author{Pawe\l {} Kolwicz*}
\address[Pawe\l {} Kolwicz]{Institute of Mathematics, Faculty of Electrical Engineering, Pozna\'{n} University of Technology,
		Piotrowo 3A, 60-965 Pozna\'{n}, Poland}
\email{pawel.kolwicz@put.poznan.pl}

\author{Lech Maligranda}
\address[Lech Maligranda]{Department of Engineering Sciences and Mathematics, Luleå University of Technology,
		SE-971 87 Luleå, Sweden}
\email{lech.maligranda@ltu.se}

\begin{abstract}
We investigate structure of the optimal domains for the Hardy-type operators including, for example, 
the classical Ces\`aro, Copson and Volterra operators as well as for some of their generalizations. 
We prove that, in some sense, the abstract Ces\`aro and Copson function spaces are closely related 
to the space $L^1$, namely, they contain ``in the middle" a complemented copy of $L^1[0,1]$, 
asymptotically isometric copy of $\ell^1$ and also can be renormed to contain an isometric copy 
of $L^1[0,1]$. Moreover, the generalized Tandori function spaces are quite similar to $L^\infty$ 
because they contain an isometric copy of $\ell^\infty$ and can be renormed to contain an 
isometric copy of $L^\infty[0,1]$. Several applications to the metric fixed point theory will be 
given. Next, we prove that the Ces\`aro construction $X \mapsto CX$ does not commutate 
with the truncation operation of the measure space support. We also study whether a given 
property transfers between a Banach function space $X$ and the space $TX$, where $T$ 
is the Ces\`aro or the Copson operator. In particular, we find a large class of properties 
which do not lift from $TX$ into $X$ and prove that the abstract Ces\`aro and Copson 
function spaces are never reflexive, are not isomorphic to a dual space and do not have 
the Radon--Nikodym property in general.
\end{abstract}

\maketitle

\renewcommand{\thefootnote}{\fnsymbol{footnote}} \footnotetext[0]{
	{\it Date:} \today.  %%%%%%% 11 Sept 2019
		
	2010 \textit{Mathematics Subject Classification}. Primary 46E30; Secondary 46B20, 46B42.

	\textit{Key words and phrases}. Banach function spaces; symmetric spaces;
	Ces\`aro, Copson, Volterra and Tadori function spaces; Ces\`aro (Hardy), Copson and Volterra operators;
	(weak) fixed point property; asymptotically isometric copy of $\ell^1$; complemented subspaces; renorming;
	dual Banach space; Radon--Nikodym property.

	*The first and second authors are supported by the Ministry of Science and Higher Education of Poland, grant number 04/43/DSPB/0106.}

\section{Introduction}

In 1925 G. H. Hardy \cite{Ha25} proved the following inequality, which today is usually called the classical 
Hardy inequality
\begin{equation*}
\int_0^\infty \left(\frac{1}{x} \int_0^x f(t) \mathrm{d}t \right)^p \mathrm{d}x \leq \left(\frac{p}{p-1} \right)^p 
\int_0^\infty f(x)^p \mathrm{d}x,
\end{equation*}
where $1 < p < \infty$ and $f$ is a nonnegative real-valued Lebesgue measurable function 
(see \cite[Chapter 3]{KMP07} for more details). This inequality can be reformulated in the following way
\begin{equation*}
\text{the Hardy operator} \quad f \mapsto \frac{1}{x}\int_0^x f(t) \mathrm{d}t \quad \text{maps $L^p[0,\infty)$ 
continuously into itself}.
\end{equation*}

Given an operator $T \in \mathcal{L}(Y,X)$, where $X$ and $Y$ are Banach function spaces, it is natural to 
ask whether there is a Banach function space, say $Z$, such that $T \colon Z \rightarrow X$ is also bounded 
and $Z$ is the largest, in the sense of inclusion, Banach function space with this property. This situation 
can be summarized by the following diagram
%$$D$$
\begin{center}
	\begin{tikzcd}
	Y \arrow[r, hook] \arrow[dr, "T"]
	& Z \arrow[d, dashrightarrow, "T"]\\
	& X
	\end{tikzcd}
\end{center}

Under some technical assumptions \cite[p. 196]{CR06}, $Z$ is the space of all measurable functions $f$ 
such that $T\abs{f} \in X$, equipped with the norm $\norm{f}_Z = \norm{T\abs{f}}_X$. In other words, 
the space $Z$ is the maximal or optimal domain for the operator $T$ considered with values in the 
fixed space $X$ and throughout this paper we adopt the convention to denote it by $TX$. This point 
of view turned out to be helpful and fruitful in the study of such classes of operators like kernel operators 
(special cases of operators in this class are, for example, the Volterra, Ces\`aro, Copson, Poisson 
or Riemann--Liouville operator), differential operators, convolutions, Fourier transform and the Sobolev 
embedding (see \cite{ORS08} and references given there).

The classical Ces\`aro and Copson function spaces appeared in a natural way as the optimal domains 
of the Hardy operator and its conjugate operator, respectively (see \cite{CR16}--\cite{DS07}, \cite{LM15a} 
and \cite{NP10}). For this reason and also to avoid the use of the term ``Hardy space", which is usually 
reserved for certain spaces of holomorphic functions on the unit disc (interestingly, introduced by 
F. Riesz in 1923 also to honour of Hardy), we will call mentioned operator the Ces\`aro operator 
$C$, i.e., $(Cf)(x) := \frac{1}{x}\int_0^x f(t) \mathrm{d}t$, remembering about his result on uniform 
convergence of averages of partial sums of Fourier series. There is also a connection between 
the Ces\`aro function spaces $CX$ and the so-called down spaces $X^\downarrow$ introduced 
by Sinnamon. Namely, for a symmetric space $X$ on $I = [0,\infty)$ such that the Ces\`aro operator 
$C$ is bounded on $X$ we can identify $CX$ with $X^\downarrow$ (see \cite{Si91}--\cite{Si07}; see 
also \cite{FLM16} and \cite[Section 3]{LM15a} for some additional remarks).

From an isomorphic point of view, the abstract Ces\`aro function spaces $CX$ as well as the abstract 
Copson function spaces $C^*X$ are a kind of a nontrivial mixture of the Banach function space $X$ 
and $L^1$ in which the properties of both spaces manifest themselves. Following this idea, we will 
look for ``the best possible" copies of the space $\ell^1$ and $L^1[0,1]$ in the Ces\`aro and Copson 
function spaces, and also of the space $\ell^\infty$ in $Ces_\infty := CL^\infty$.
We apply our results, among others, to the fixed point theory which is a very wide branch of 
functional analysis and has been developed for several decades (see \cite{gk} and \cite{KS01}). 
It has many applications, for example, in nonlinear analysis as well as in integral and differential 
equations. In particular, the question whether a Banach space $X$ has or fails the (weak) fixed point
property for nonexpansive mappings is a fundamental in this area.

In \cite[ Theorem 1 and 2]{AM08} Astashkin--Maligranda proved that the Ces\`aro function spaces 
$Ces_{p}:=CL^{p}$ for $1\leq p\leq \infty $ if $I=[0,1]$ and $1<p\leq \infty $ if $I=[0,\infty )$ fail to 
have the fixed point property for nonexpansive mappings. In contrast, it was proved by Cui--Hudzik 
\cite{CH99}, Cui--Hudzik--Li \cite{CHL00}, and Cui--Meng--P\l {}uciennik \cite{CMP00} that their 
sequence counterparts, i.e., the Ces\`aro sequence spaces $ces_{p}:=C\ell ^{p}$, have this 
property whenever $1<p<\infty $. We will show that the abstract Ces\`aro 
and Copson function spaces on two separable measure spaces $[0,1]$ and $[0,\infty )$ contain 
an order asymptotically isometric copy of $\ell ^{1}$ (the notions of asymptotic isometries are 
intermediate between the isomorphic and isometric theory) and thus, by the Dowling--Lennard 
result, fail to have the fixed point property in general. In the case of Ces\`aro function spaces this 
result can be seen as a generalization of the Astashkin--Maligranda result from \cite{AM08}. In
fact, the main idea to find an asymptotically isometric copy of $\ell ^{1}$ (which, by the way, were 
introduced precisely to show that certain spaces fail to have the fixed point property) remains 
the same but the argument is much more sophisticated and works in full generality. 
An analogous result for the Copson function spaces is new even for $X = L^{p}$. On the other 
hand, we also prove that nontrivial Tandori function spaces $\widetilde{\mathit{X}}$ contain an order
isomorphically isometric copy of $\ell ^{\infty }$ and consequently even fail to have the weak 
fixed point property. 

The second important problem we consider is the question whether ``some" property can be 
transfered from a simpler structure to more complicated one and vice versa. This type of problems 
has been successfully considered for many constructions. For example, we can mention three 
of such questions: $1^0$. $(X,E) \mapsto E(X)$, where $X$ is a Banach space, $E$ is a Banach 
function space and $E(X)$ is a K\"othe--Bochner space, $2^0$. $(X,Y) \mapsto \mathfrak{F}(X,Y)$, 
where $X$ and $Y$ are symmetric spaces and $\mathfrak{F}$ is an interpolation functor (see 
references in \cite{LM15p}), and $3^0$. $X \mapsto X^{(*)}$, where $X$ is a Banach function 
space and $X^{(*)}$ is the so-called symmetrization of $X$ (see references in \cite{KLM-2019}).

We will also consider this problem but for the Ces\`aro and Copson construction $X \mapsto TX$ 
presenting a large class of properties that never transfer from the space $TX$ into $X$.

Finally, we will examine the Ces\`aro construction $X \mapsto CX$ itself. More precisely, we show that this 
construction does not commutate in general with the truncation operation $X \mapsto X|_{[0,1]}$ 
highlighting in this way the difference between the Ces\`aro function spaces defined on $[0,1]$ 
and $[0,\infty)$. The question whether two operations commutate has been often investigated. 
For example, the symmetrization operation $X \mapsto X^{(*)}$ commutates with the 
Calder\'on--Lozanovski{\u \i} construction $\rho(X,Y)$ (in particular with the pointwise product $X \odot Y$)  
and with the pointwise multipliers $M(X,Y)$ (in particular with the K\"othe dual $X'$), see \cite{KLM-2019}. 
Furthermore, the Ces\`aro construction $X \mapsto CX$ comutates with the interpolation functor
$\mathfrak{F}$ having the homogenity property (see \cite[Theorem 6]{LM15p}).

It is worth to mention that we are able to prove most of the results without the assumption that the Ces\`aro 
or Copson operator is bounded on $X$, the assumption which is present in almost all results of this type. 

The paper is organized as follows.
After an introduction we collect some necessary definitions, basic facts and notations in Section 2. 
Here we also recall the duality theorem of Le\'snik--Maligranda from \cite{LM15a}, the 
Lindenstrauss--Tzafriri \cite{LT79} and Boyd \cite{Bo69} results on interpolation because we will 
use them frequently.

In Section 3 we will provide some basic results regarding nontriviality of the abstract Copson function 
spaces (Lemma \ref{Copson nontrivial} and Corollary \ref{Copson bounded and embeddings}). 
We discuss also the difference between the condition $TX \neq \{ 0 \}$ and the fact that the operator 
$T$ is bounded on $X$, where $T$ is the Ces\`aro or the Copson operator (Example \ref{Cesaro 
i Copson dla L1 i L8}). 

Section 4 starts with two lemmas (Lemma \ref{complementedL1} and Lemma \ref{complemented 
L1 in C*X}) which will play a crucial role later on. In particular, they show that the nontrivial Ces\`aro 
and Copson function spaces contain ``in the middle" a complemented copy of $L^1[0,1]$. Next, 
we prove that a Banach space $X$, which contains a complemented copy of a space $Y$, can 
always be renormed to contain an isometric copy of the space $Y$ (Theorem \ref{general 
renorming}). As a corollary we obtain immediately that the Ces\`aro and Copson function spaces 
can be renormed to contain an isometric copy of $L^1[0,1]$ and that the Tandori function spaces can be 
renormed to contain an isometric copy of $L^\infty[0,1]$. Finally, in Theorem \ref{AIC-l1} and 
Theorem \ref{AIC-l1 in C^*X} we present the main result of this section, namely, that the Ces\`aro 
and Copson function spaces always contain an order asymptotically isometric copy of $\ell^1$. 
Since the generalized Tandori function spaces $\widetilde{\mathit{X}}$ and the space $Ces_{\infty}$ 
are never order continuous \cite[Theorem 1 (e)]{LM15a} it follows that they contain an isomorphic 
copy of $\ell^\infty$. Nevertheless, we prove that $\widetilde{\mathit{X}}$ and $Ces_{\infty}$ 
even always contain an order isometric copy of $\ell^\infty$ (Proposition \ref{isometric l^infty in 
Tandori} and Proposition \ref{Ces_infty isometric copy of l8}). 

Next, in Section 5, we try to compare the Ces\`aro function spaces defined on $[0,1]$ and on 
$[0,\infty)$ and we show that the Ces\`aro construction $X \mapsto CX$, and the truncation operation 
of the measure space support $X \mapsto X|_{[0,1]}$ does not commutate in general 
(Lemma \ref{obciecie}). This fact explains, in a sense, quite supprising differences between some 
results obtained for the Ces\`aro function spaces on a finite and infinite interval (see also 
\cite{AM09}, \cite{LM15a} and \cite{LM15p}).

In Section 6, we analyze a problem of transfer of properties between $X$ and $TX$, where $T = C$ 
or $T = C^*$. We give an example of properties which lift from the Banach function space $X$ to 
$TX$ and vice versa (Corollary \ref{Copson a Cesaro} and Lemma \ref{X = Y, to CX = CY}). 
Next, using the results of Bessaga--Pe\l {}czy\'nski and Talagrand we obtain that the Ces\`aro and 
Copson function spaces are not isomorphic to a dual space and do not have the Radon--Nikodym 
property (Corollary \ref{not dual and not RNP}). Moreover, we include an example of a certain 
class of Banach function spaces which contain ``in the middle" an isomorphic copy of a Banach 
function space $Y$, but the construction $X \mapsto TX$, in a sense, forgets about this copy 
(Lemma \ref{CX zjada srodek}). The presented comparison of this example with the result from 
\cite{KT17} can be instructive. Furthermore, we  give a large class of properties (including, for 
example, order continuity, $p$-concavity and the Dunford--Pettis property) which do not transfer 
from $TX$ to $X$ (Theorem \ref{transfer}).  

The main result in Section 7 is Theorem \ref{main theorem}, which states that the abstract Ces\`aro 
and Copson function spaces fail to have the fixed point property in general. We prove that, under 
additional assumptions, these spaces cannot even be renormed to have the fixed point property
(Corollary \ref{TX bez przenormowania do FPP}). We also conclude that the generalized Tandori 
function spaces $\widetilde{\mathit{X}}$ and the space $Ces_{\infty}$ fail to have the weak fixed 
point property (Proposition \ref{Proposition wFPP}). 

Section 8 presents a certain way to generalize the results from the previous sections. We show that 
the methods developed by us in Sections 4 and 7 also work for a wider class of operators, e.g., 
for the weighted Ces\`aro operator $\mathcal{H}_w$ and its conjugate $\mathcal{H}_w^{*}$ 
(Theorem \ref{AIC-l1 uogolnione C i C*} and Theorem \ref{uogolnione Ci C* nie maja FPP}). 
In particular, we prove that an abstract Volterra spaces $VX$ fail to have the fixed point property 
as well (Corollary \ref{VX nie ma FPP}).

Finally, the Appendix is devoted to the analysis of a certain objects, specifically, two functions $F_X$ 
and $G_X$, that appeared in the proof of Theorems \ref{AIC-l1} and \ref{AIC-l1 in C^*X}. We finish 
this section with a few examples (Example \ref{przyklad do F_X}). In the first one we give some 
rather exotic examples of the function $F_X$ and in the next we will justify that the order continuity 
of a symmetric space $X$ is not crucial for the continuity of the function $F_X$.

%%%%%%%%%%%%%%%%%%%%%% 2
\section{Notation and preliminaries}

\subsection{Banach function spaces and symmetric spaces}

Denote by $m$ the Lebesgue measure on $I$, where $I = [0,1]$ or $I = [0, \infty)$, and by $L^0 = L^0(I)$ 
the set of all equivalence classes of real-valued Lebesgue measurable functions defined on $I$.
A \textit{Banach function space} (or a \textit{Banach ideal space}) $X = (X, \norm{\cdot}_X)$ on $I$
is understood to be a Banach space $X$ such that $X$ is a linear subspace of $L^0(I)$ satisfying 
the so-called \textit{ideal property}, which means that if $f,g \in L^0(I)$, $\abs{f(t)} \leq \abs{g(t)}$ 
for almost all $t \in I$ and $g \in X$, then $f \in X$ and $\norm{f}_X \leq \norm{g}_X$.
If it is not stated otherwise we assume that a Banach function space $X$ contains a function 
$f_0 \in X$ which is positive almost everywhere (in short, {\it a.e.}) on $I$ (such a function is called the 
\textit{weak unit} in $X$), which means that $\supp(X) = I$. Sometimes we will write $X[0,1]$ or 
$X[0,\infty)$ to clearly indicate that a Banach function space $X$ is defined on $I = [0,1]$ or on 
$I = [0,\infty)$, respectively. We say that a Banach function space $X$ is \textit{nontrivial} if 
$X \neq \left\{ 0\right\}$.

For two Banach function spaces $X$ and $Y$ on $I$, the symbol $X \lhook\joinrel\xrightarrow{M} Y$ 
denotes the fact that the inclusion $X \subset Y$ is continuous with the norm not bigger than $M$, i.e., 
there exists a constant $M > 0$ (we will call it the embedding constant) such that 
$\norm{f}_Y \leq M\norm{f}_X$ for all $f \in X$. If the embedding $X \lhook\joinrel\xrightarrow{M} Y$ 
holds with some (maybe unknown) constant $M > 0$ we simply write $X \hookrightarrow Y$ and 
$\norm{f}_Y \lesssim \norm{f}_X$. Recall also that for two Banach function spaces $X$ and $Y$ 
the inclusion $X \subset Y$ is always continuous.
Moreover, $X = Y$ (resp. $X \equiv Y$) means that the spaces $X$ and $Y$ have the same 
elements and their norms are equivalent (resp. equal). If the spaces $X$ and $Y$ are isomorphic 
(resp. are isometric under the isometry $\lambda \cdot \text{id}$, where $\lambda > 0$), then 
we write $X \simeq Y$ (resp. $X \cong Y$). 

Let us remind that the {\it K\"othe dual space} (or {\it associated space}) $X' = X'(I)$ of a Banach 
function space $X$ on $I$ is defined as 
$$ X' := \{ f \in L^0(I) \colon \norm{f}_{X'} = \sup\limits_{g\in X, \ \norm{g}_X \leq 1} 
\int_I \abs{f(x)g(x)}\mathrm{d}x < \infty \}.$$
The K\"othe dual space is again a Banach function space. Moreover, $X \lhook\joinrel\xrightarrow{1} X'' := (X')'$ 
and $X = X''$ if and only if the norm in $X$ has the {\it Fatou property} (in short $X \in (FP)$), i.e., 
if for any sequence $(f_n) \subset X$ with $0 < f_n \uparrow f$ almost everywhere on $I$ such that 
$\sup\limits_{n\in \mathbb{N}} \norm{f_n}_X < \infty$, we have $f \in X$ and $\norm{f_n}_X \uparrow \norm{f}_X$.

A function $f \in X$, where $X$ is a Banach function space space on $I$, is said to have an {\it order 
continuous norm} in $X$ if for any decreasing sequence of sets $A_n \subset I$ with empty intersection, 
we have $\norm{f\chi_{A_n}}_X \rightarrow 0$ as $n\rightarrow \infty$ (see \cite[Proposition 3.5, p. 15]{BS88}).
By $X_a$ we denote the {\it subspace of all functions with order continuous norm in} $X$.
A Banach function space space $X$ on $I$ is said to be \textit{order continuous} (we write $X \in (OC)$ 
for short) if every element of $X$ has an order continuous norm, that is, if $X_a = X$. The subspace $X_a$ 
is always closed in $X$ (cf. \cite[Th. 3.8, p. 16]{BS88}). If $X$ is an order continuous Banach function 
space then $X^* = X'$ (see \cite[Theorem 4.1, p. 20]{BS88}). Moreover, a Banach function space on 
$I$ with the Fatou property is reflexive if and only if both $X$ and $X'$ are order continuous 
(cf. \cite[Corollary 4.4, p. 23]{BS88}).

Throughout the paper, we will accept the convention that whenever we take a subset $A \subset I$, 
we mean that $A$ is a Lebesgue measurable set. For a function $f \in L^0(I)$ we define the 
\textit{support of} $f$ as $$\supp(f) := \{x \in I \colon f(x) \neq 0 \}.$$

For a measurable function $w : I \rightarrow (0,\infty)$ (the \textit{weight on} $I$) and for a Banach function 
space $X$ on $I$, the \textit{weighted Banach function space} $X(w) = X(w)(I)$ is defined as
\begin{equation*}
X(w) := \{ f \in L^0(I) : fw \in X \},
\end{equation*}
with the norm $\norm{f}_{X(w)} = \norm{fw}_X$. It is clear that $X(w)$ is a Banach function space on $I$ 
and $X(w)' \equiv X'(1/w)$.

For a function $f \in L^0(I)$ we define the \textit{distribution function} $d_f(\lambda) := m(\{  t \in I : \abs{f(t)} > \lambda \})$ 
for $\lambda > 0$. We say that two functions $f, g \in L^0(I)$ are \textit{equimeasurable} when they have 
the same distribution functions, i.e. $d_f \equiv d_g$. By a \textit{symmetric space} (\textit{symmetric Banach 
function space} or \textit{rearrangement invariant Banach function space}) on $I$ we mean a Banach function 
space $E = (E, \norm{\cdot}_E)$ on $I$ with the additional property that for any two equimeasurable 
functions $f,g \in L^0(I)$ if $f \in E$ then $g \in E$ and $\norm{f}_E = \norm{g}_E$.
In particular, $\norm{f}_E = \norm{f^*}_E$, where $f^*(t) := \inf\{\lambda > 0 : d_f(\lambda) \leq t \}$ 
for $t \geq 0$.

For general properties of Banach lattices, Banach function spaces and symmetric spaces we refer 
to the books by Bennett--Sharpley \cite{BS88}, Kantorovich--Akilov \cite{KA82},
Krein--Petunin--Semenov \cite{KPS82}, Lindenstrauss--Tzafriri \cite{LT79}, Maligranda \cite{Ma89},
Meyer-Nieberg \cite{MN91}, and Wnuk \cite{Wn99}.

%%%%%%%%%%%%%% 2.2
\subsection{Ces\`aro, Copson and Tandori function spaces} \label{subsekcja CX, C*X i Tandori}

For a Banach function space $X$ on $I$ the \textit{abstract Ces\`aro function space} $CX = CX(I)$ 
is defined as 
$$
CX := \{ f \in L^0(I) \colon C\abs{f} \in X \} \quad \text{with the norm} \quad \norm{f}_{CX} := \norm{C\abs{f}}_X,
$$
where $C$ denotes the \textit{Ces\`aro operator} (sometimes also called the Hardy operator)
$$
C \colon f \mapsto Cf(x) := \frac{1}{x} \int_0^x f(t) \mathrm{d}t \quad \text{for} \quad 0 < x \in I.
$$
The Copson and Tandori spaces are directly related to the Ces\`aro spaces. For a Banach ideal space 
$X$ on $I$ we define the \textit{abstract Copson function space} $C^*X = C^*X(I)$ as
$$
C^*X := \{ f \in L^0(I) \colon C^*\abs{f} \in X \} \quad \text{with the norm} \quad \norm{f}_{C^*X} := \norm{C^*\abs{f}}_X,
$$
where $C^*$ denotes the conjugate operator (in the sense of K\"othe) to the Ces\`aro operator $C$, which will be called
the \textit{Copson operator}, that is
$$
C^* \colon f \mapsto C^*f(x) := \int_{I\cap [x,\infty)} \frac{f(t)}{t} \mathrm{d}t \quad \text{for} \quad x \in I,
$$
and the \textit{abstract Tandori function space} $\widetilde{\mathit{X}} = \widetilde{\mathit{X}}(I)$ as
$$ 
\tandori{X} := \{ f \in L^0(I) \colon \tandori{f} \in X \} \quad \text{with the norm} \quad \norm{f}_{\tandori{X}} := \norm{\tandori{f}}_X,
$$
where by the \textit{nonincreasing majorant} $\tandori{f}$ of a given function $f$ we understand by
$$
\tandori{f}(x) := \esssup\limits_{t\in I, t \geq x} \abs{f(t)} \quad \text{for} \quad x \in I.
$$

The abstract Ces\`aro function spaces are simply a generalization of the well-known classical Ces\`aro spaces 
$Ces_p[0,1]$ and $Ces_p[0,\infty)$. Indeed, if we take $X = L^p$, where $1 \leq p \leq \infty$, then 
$Ces_p = CL^p$ (note, that in the case when $p = 1$ we have $Ces_1[0,1] = L^1(\ln(1/t))$ and 
$Ces_1[0,\infty) = \{0 \} $). The space $Ces_\infty[0,1]$ appeared already in 1948 and it is known 
as the Korenblyum--Kre\u \i n--Levin space $K$ (see \cite{KKL48}, \cite[p. 26 and p. 61]{Wn99} 
and \cite[pp. 469--471]{Za83}).

Various properties of these spaces have been studied by Astashkin in \cite{As12}, Astashkin--Maligranda 
in \cite{AM08}--\cite{AM14}, Hassard--Hussein in \cite{HH73}, Kami\'{n}ska--Kubiak in \cite{KK12}, Kubiak 
in \cite{Ku14}, Shiue in \cite{Sh70} and Sy--Zhang--Lee in \cite{SZL87}. Taking $X = L^\Phi$, 
$X = \Lambda_\varphi$ or $X = M_\varphi$ we obtain the Ces\`aro--Orlicz, Ces\`aro--Lorentz and 
Ces\`aro--Marcinkiewicz spaces, respectively, which have been studied intensively by 
Astashkin--Le\'{s}nik--Maligranda in \cite{ALM17}, Kiwerski--Kolwicz in \cite{KK16}--\cite{Ki-Kol-isom}, 
and Kiwerski--Tomaszewski in \cite{KT17}. 
General consideration of this construction when $X$ is a Banach function space or sometimes a symmetric 
space were initiated in \cite{LM15a} and \cite{LM15b}. More recently, the structure of these spaces, 
especially in their general form, is quite popular to study among various researchers such as 
Astashkin--Le\'snik--Maligranda \cite{ALM17}, Curbera--Ricker \cite{CR16}, Delgado--Soria \cite{DS07},
and Kiwerski--Tomaszewski \cite{KT17}.

Note that Ces\`aro function spaces $CX$ are never symmetric nor reflexive. Nevertheless, at least when 
$X$ is a symmetric space, there are some connections and similarities with the classical theory of normed 
ideal spaces and symmetric spaces. For example, it has been shown in \cite[Theorem 3]{KT17}
that order continuity property ``transfers" quite well between $X$ and $CX$. Moreover, $Ces_\infty$ 
and $ces_\infty$ are isomorphic, see \cite[Theorem 13]{ALM17} (this is analogous to the well-known 
Pe\l czy\'{n}ski result \cite{Pe58}, which states that spaces $L^\infty$ and $\ell^\infty$ are isomorphic).
Furthermore, $\widetilde{\mathit{\ell^1}}$ has the Schur property but is not isomorphic to $\ell^1$ \cite[Theorem 3.1]{ALM17}. 
Of course, there are also big differences if we compare the results obtained in the cases of a finite 
and infinite interval. For example, this differences can be seen in results on the K\"{o}the duality for 
abstract Ces\`aro function spaces in \cite[Theorems 3, 4 and 5]{LM15a} (cf. also Theorem A below) 
or in the interpolation results proved in \cite{LM15p}.

It is worth mentioning here that the study of the classical Ces\`aro sequence spaces $ces_p = C\ell^p$ 
for $1 < p \leq \infty$ began much earlier and many results have been obtained, see \cite{AM09} 
and \cite{AM14} and the references therein.

Copson function spaces $Cop_p = C^*L^p$ and Copson sequence spaces $cop_p = C^*\ell^p$ have 
appeared already in Bennett's memoir \cite[pp. 25--28 and p. 123]{Be96}. Furthermore, 
Astashkin--Maligranda used Copson function spaces $Cop_p$ to describe their interpolation results, 
see \cite[Section 2]{AM13a}. The abstract Copson spaces have been studied by Le\'{s}nik--Maligranda 
in \cite{LM15p}. For some connections between the Ces\`aro and Copson function spaces
and even their iterations $CCX$ and $C^*C^*X$ we refer to \cite[Theorem 1 (a) and (b)]{LM15p}.

Le\'snik and Maligranda suggested in \cite{LM15a} to call $\tandori{X}$ the generalized Tandori spaces 
since Tandori \cite{Ta55} proved in 1954 that $(Ces_\infty[0,1])' = \tandori{L^1}[0,1]$. Moreover, these 
spaces appeared earlier but without such name, e.g., in \cite{AM09} and \cite{LM15b}.
The Tandori spaces are related to the K\"{o}the duality of Ces\`aro spaces. Many special cases of 
this general construction have been studied by Alexiewicz \cite{Al57}, Astashkin--Maligranda \cite{AM09}, 
Bennett \cite{Be96}, Jagers \cite{Ja74}, Kami\'{n}ska--Kubiak \cite{KK12} and Luxemburg--Zaanen \cite{LZ66}.
The general Tandori spaces $\tandori{X}$ have been studied by Le\'{s}nik--Maligranda in 
\cite{LM15a}--\cite{LM15p} and the following K\"othe duality result was proved in \cite[Theorems 3, 5 and 6]{LM15a}.

%%%%%%%%%%%%%%%% Thm A
\bigskip \noindent
{\bf Theorem A}. {\it If $X$ is a Banach function space on $I = [0,\infty)$ such that
the Ces\`aro operator $C$ and the dilation operator $\sigma_\tau$ (for some $0 < \tau < 1$) are 
bounded on $X$, then
\begin{equation} \label{Theorem A na [0,infty]}
(CX)' = \tandori{X'}.
\end{equation}
Furthermore, if $X$ is a symmetric space on $I = [0,1]$ with the Fatou property such that both 
operators $C$ and $C^*$ are bounded on $X$, then}
\begin{equation} \label{Theorem A na [0,1]}
(CX)' = \tandori{X'(w)} \quad \text{where} \quad w \colon [0,1) \ni x \mapsto \frac{1}{1-x}.
\end{equation}
\smallskip

The {\it dilation operator} $\sigma_\tau$ for $\tau > 0$ is defined by $\sigma_\tau f(x) := f(x/\tau)$ for 
$0 < x < \infty$ and
\begin{equation*}
\sigma_\tau f(x) := \begin{cases} f(x/\tau), & \mbox{if } x < \min\{1,\tau \} \\
									0 & \mbox{if } \tau \leq x < 1 \end{cases},
\end{equation*}
for $0 < x \leq 1$. This operator is bounded in any symmetric space $X$ on $I$ and 
$\norm{\sigma_\tau}_{X\rightarrow X} \leq \max\{1, \tau\}$ (see \cite[p. 148]{BS88} and \cite[pp. 96--98]{KPS82}).
The \textit{Boyd indices} of a symmetric space $X$ are defined by
\begin{equation*}
p(X) := \lim\limits_{\tau \rightarrow \infty} \frac{\ln \tau}{\ln \norm{\sigma_\tau}_{X \rightarrow X}} \quad \text{and} \quad
q(X) := \lim\limits_{\tau \rightarrow 0^+} \frac{\ln \tau}{\ln \norm{\sigma_\tau}_{X \rightarrow X}}.
\end{equation*}
Let us mention that these numbers can be different for $X$ on $I = [0, 1]$ and for $X$ on $I = [0, \infty)$, but
always we have estimates $1 \leq p(X) \leq q(X) \leq \infty$ (see \cite{KPS82}, \cite{LT79} and \cite{Ma85}).
\bigskip

We will use the following result from the Lindenstrauss--Tzafriri book \cite[Proposition 2.b.3, p. 132]{LT79}.

%%%%%%%%%%%%% Thm B
\bigskip \noindent
{\bf Theorem B}. {\it If $X$ is a symmetric space on $I$, then there are constants $A, B > 0$ such that
\begin{equation} \label{wlozenie}
L^{p}\cap L^{q} \lhook\joinrel\xrightarrow{A} X \lhook\joinrel\xrightarrow{B} L^{p}+L^{q},
\end{equation}
for every $p, q > 0$ satisfying $1 \leq p < p(X)$ and $q(X) < q \leq \infty$, where $p(X)$ and $q(X)$ 
are the Boyd indices of the space $X$,
$$L^{p}\cap L^{q} := \{f \in L^0(I) : \norm{f}_{L^p \cap L^q} = \max\{\norm{f}_{L^p}, \norm{f}_{L^q} \} < \infty \},$$
and
$$L^{p}+L^{q} := \{f \in L^0(I) : \norm{f}_{L^{p}+L^{q}} = \inf\limits_{\substack{f = g + h \\ g \in L^p, \ h \in L^q}} \{ \norm{g}_{L^p} + \norm{h}_{L^q}\} < \infty \}.$$
Moreover, if $p(X) = 1$ (resp. $q(X) = \infty$) then we can take $p = 1$ (resp. $q = \infty$) in (\ref{wlozenie}).}
\bigskip

Let us recall the important result about boundedness of the Ces\`aro operator (cf. \cite[Theorem 17, p. 130]{KMP07}).

%%%%%%%%%%%%%% Thm C
\bigskip \noindent
{\bf Theorem C}. {\it Let $X$ be a symmetric space on $I$. Then
\begin{enumerate}
\item [(i)] the Ces\`aro operator $C$ is bounded on $X$ if and only if $p(X) > 1$,
\item [(ii)] the Copson operator $C^*$ is bounded on $X$ if and only if $q(X) < \infty$.
\end{enumerate}
}
\bigskip

Throughout the article we will use the following notation: the norm of the function
$f_{\lambda } \colon I\ni x\longmapsto \frac{1}{x}\chi_{\lbrack \lambda, m(I))}(x)$, where $0 < \lambda \in I$,
in a Banach function space $X$ on $I$, will be denoted by
$\left\Vert \frac{1}{x}\chi _{\lbrack \lambda, m(I))}(x)\right\Vert _{X(I)}$,
i.e.,
\begin{equation*}
\left\Vert f_{\lambda }\right\Vert _{X(I)}:=\left\Vert \frac{1}{x}\chi
_{\lbrack \lambda ,m(I))}(x)\right\Vert _{X(I)},
\end{equation*}
and the norm of the function $(C\left\vert f\right\vert)\chi _{A} \colon I\ni x\longmapsto 
\frac{1}{x}\int_{0}^{x}\left\vert f(t)\right\vert \mathrm{d}t\chi _{A}\left(
x\right)$, where $A\subset I$, will
be denoted by
\begin{equation*}
\left\Vert \frac{1}{x}\int_{0}^{x}\left\vert f(t)\right\vert \mathrm{d}t\chi
_{A}\left( x\right) \right\Vert _{X(I)}:=\left\Vert \left(
C\left\vert f\right\vert \right) \chi _{A}\right\Vert _{X(I)}.
\end{equation*}

Recall that if $X$ is a Banach function space on $I$ and $X \in (FP)$, then the Ces\`aro operator $C$ 
is bounded on $X$ if and only if the Copson operator $C^*$ is bounded on $X'$ and 
$\norm{C}_{X \rightarrow X} = \norm{C^*}_{X' \rightarrow X'}$ (see \cite[Remark 1 (iv)]{KLM-2019}).
Note also that if $X$ is a Banach function space on $I$, then the assumption $C \colon X \rightarrow X$ 
is in fact equivalent to the statement that the Ces\`aro operator $C$ is bounded on $X$ (see \cite{KT17}). 
Clearly, if the operator $C$ is bounded on $X$, then $X \hookrightarrow CX$. Therefore, the space 
$CX$ is nontrivial with $\supp(CX) = \supp(X) = I$. We will now collect some other useful facts about 
an abstract Ces\`aro function space $CX$, which are proved in \cite[the proof of Proposition 2.2]{ALM17}, 
\cite[Theorem 1 (a) and (b)]{LM15a} and \cite[Lemma 2]{KT17}.

%%%%%%%%%%%%% Thm D
\bigskip \noindent
{\bf Theorem D}. {\it Let $X$ be a Banach function space on $I$. Then
\begin{enumerate}
\item [(i)] $CX[0,1]$ is nontrivial if and only if $\chi_{[\lambda, 1]} \in X$ for some $0 < \lambda < 1$.
\item [(ii)] $CX[0,\infty)$ is nontrivial if and only if $\frac{1}{x}\chi_{[\lambda, \infty)}(x) \in X$ for some $\lambda > 0$.
\end{enumerate}
In particular, $[\lambda, m(I)) \subset \supp(CX)$ for some $0 < \lambda < m(I)$.
	
If $X$ is a Banach function space on $I$ such that the Ces\`aro operator $C$ is bounded on $X$ or $X$ 
is a symmetric space on $[0,1]$ or $X$ is a symmetric space on $[0,\infty)$ with $CX[0,\infty) \neq \{ 0 \}$, then
\begin{enumerate}
\item [(iii)] $\chi_{[\lambda, 1]} \in X$ for all $0 < \lambda < 1$ if $I = [0,1]$,
\item [(iv)] $\frac{1}{x}\chi_{[\lambda, \infty)}(x) \in X$ for all $\lambda > 0$ if $I = [0,\infty)$.
\end{enumerate}
In particular, $\supp(CX) = \supp(X) = I$. Let us emphasize also that if $X$ is a symmetric space on $[0,1]$, 
then $CX$ is always nontrivial.
}
\bigskip

%%%%%%%%%%%%%%%%%%% 3
\section{Some auxiliary results}

We will give below a few simple but useful facts about the Copson spaces.

\begin{lemma} \label{Copson nontrivial}
{\it Let $X$ be a Banach function space on $I$. Then the Copson space $C^*X$ is nontrivial if 
and only if $\chi_{[0,\lambda]} \in X$ for some $0 < \lambda < m(I)$.}
\end{lemma}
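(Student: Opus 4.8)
The plan is to prove both implications by exploiting two elementary features of the Copson operator: that $C^*g$ is nonincreasing on $I$ for every $g \geq 0$, and that $C^*$ applied to the indicator of a compact subinterval of $(0, m(I))$ produces a bounded function supported near the origin. In both directions I reduce the question, via the ideal property of $X$, to comparing $C^*\abs{f}$ with a scalar multiple of an indicator $\chi_{[0,\lambda]}$.

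For sufficiency ($\Leftarrow$), assume $\chi_{[0,\lambda]} \in X$ for some $0 < \lambda < m(I)$ and test nontriviality on the nonzero function $f := \chi_{[\lambda/2, \lambda]}$. A direct computation gives
\[
C^*f(x) = \int_{I \cap [x,\infty)} \frac{\chi_{[\lambda/2,\lambda]}(t)}{t}\,\mathrm{d}t
= \ln\frac{\lambda}{\max\{x, \lambda/2\}} \leq \ln 2 \quad \text{for } x \leq \lambda,
\]
and $C^*f(x) = 0$ for $x > \lambda$. Hence $C^*f \leq (\ln 2)\,\chi_{[0,\lambda]}$, so the ideal property yields $C^*f \in X$; that is, $0 \neq f \in C^*X$ and $C^*X$ is nontrivial.

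For necessity ($\Rightarrow$), suppose $C^*X \neq \{0\}$ and fix $0 \neq f \in C^*X$, so that $g := C^*\abs{f} \in X$. The crucial observation is that $g$ is nonincreasing. Since $f \neq 0$, its support $S := \{ t \in I : \abs{f(t)} > 0 \}$ has positive measure, and as $m(S \cap [x,\infty)) \to m(S) > 0$ when $x \to 0^+$, we may fix $x_1 \in (0, m(I))$ with $m(S \cap [x_1,\infty)) > 0$; for this $x_1$ the integrand $\abs{f(t)}/t$ is positive on a set of positive measure, whence $g(x_1) > 0$. On the other hand $g \in X \subset L^0(I)$ is finite a.e., and since $g$ is nonincreasing the set where $g = \infty$ is an initial subinterval of $I$; as it must have measure zero, $g(x) < \infty$ for every $x > 0$, so in particular $c := g(x_1) \in (0,\infty)$. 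Monotonicity now gives $g(x) \geq c$ for all $x \leq x_1$, i.e. $c\,\chi_{[0,x_1]} \leq g \in X$, and the ideal property delivers $\chi_{[0,x_1]} \in X$ with $0 < x_1 < m(I)$, as wanted.

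The only delicate point---which I would flag as the main (and rather mild) obstacle---is the choice of the threshold $x_1$ in the necessity direction: it must be selected so that $g(x_1)$ is simultaneously strictly positive (using that the support of $f$ meets $[x_1,\infty)$ in positive measure) and strictly finite (using $g \in L^0(I)$ together with the monotonicity of $g$). Everything else is a routine use of the ideal property, and notably no boundedness assumption on $C^*$ is required.
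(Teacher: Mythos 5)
Your proof is correct and follows essentially the same route as the paper: for necessity you bound $c\,\chi_{[0,x_1]} \leq C^*\abs{f} \in X$ using monotonicity of $C^*\abs{f}$ and the ideal property (the paper does this with $\eta := \int_\lambda^{m(I)} \abs{f(t)}/t\,\mathrm{d}t > 0$, where finiteness of $\eta$ is implicit from $C^*\abs{f} \in L^0$, the point you spell out explicitly), and for sufficiency you test on $\chi_{[\lambda/2,\lambda]}$ with the bound $C^*\chi_{[a,\lambda]} \leq \ln(\lambda/a)\,\chi_{[0,\lambda]}$, exactly the paper's estimate with the particular choice $a = \lambda/2$.
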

\begin{proof}
Assume that $C^*X \neq \{ 0 \}$. Then there exists $f \in C^*X$ with $\abs{f(x)} > 0$ for 
$x \in A \subset I$ and $m(A) > 0$. Of course, we can also find $\lambda > 0$ such that
$$ 
\int_\lambda^{m(I)} \frac{\abs{f(t)}}{t} \mathrm{d}t := \eta > 0. 
$$
Therefore, we obtain that
\begin{align*}
\eta \chi_{[0,\lambda]}(x) & = \int_\lambda^{m(I)} \frac{\abs{f(t)}}{t} \mathrm{d}t \chi_{[0,\lambda]}(x) \\
& \leq \int_x^{m(I)} \frac{\abs{f(t)}}{t} \mathrm{d}t \chi_{[0, \lambda]}(x)
\leq C^*\abs{f}(x) \in X,
\end{align*}
so $\chi_{[0,\lambda]} \in X$.
	
If $\chi_{[0,\lambda]} \in X$ for some $0 < \lambda < m(I)$, then for $0 < a < \lambda$ we have
$$ 
\norm{\chi_{[a, \lambda]}}_{C^*X} \leq \norm{(\int_a^\lambda \frac{\mathrm{d}t}{t}) \chi_{[0,\lambda]}}_X
= \ln(\frac{\lambda}{a}) \norm{\chi_{[0,\lambda]}}_X < \infty,
$$
which means that $C^*X \neq \{ 0 \}$.
\end{proof}

\begin{corollary} \label{Copson bounded and embeddings}
(1) {\it The Copson space $C^*X$ is always nontrivial whenever $X$ is a symmetric space.}

(2) {\it If $X$ is a Banach function space on $I$ such that the operator $C^*$ is bounded on 
$X$, then $\supp(C^*X)=I$. In particular, the Copson space $C^*X$ is nontrivial. Moreover,
\begin{enumerate}
\item [(i)] $L^\infty[0,1]|_{[0,\lambda]} \hookrightarrow X[0,1]$ for all $0 < \lambda < 1$
		and, in addition, $L^\infty[0, 1] \hookrightarrow X[0,1]$ if $X$ has the Fatou property,
\item [(ii)] $L^\infty_{\text{fin}}[0,\infty) \subset X[0, \infty)$
		and, in addition, $L^\infty_b[0,\infty) \hookrightarrow X[0, \infty)$ if $X$ has the Fatou property,
\end{enumerate}
where $L^\infty_{\text{fin}}(I) := \{f \in L^\infty(I) \colon m(\supp(f)) < \infty \}$ and $(L^\infty(I))_b = L^\infty(I)_b$
is the closure of $L^\infty_{\text{fin}}(I)$} in $L^\infty(I)$.
\end{corollary}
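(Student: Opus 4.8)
The plan is to reduce everything to the nontriviality criterion of Lemma~\ref{Copson nontrivial} together with the elementary observation that boundedness of $C^*$ on $X$ yields the embedding $X\hookrightarrow C^*X$. For part (1), if $X$ is symmetric then for $0<\lambda<m(I)$ the function $\chi_{[0,\lambda]}$ is bounded with support of finite measure $\lambda$, so $\chi_{[0,\lambda]}\in L^1\cap L^\infty\hookrightarrow X$ (every symmetric space contains the characteristic functions of finite-measure sets; this also follows from the left inclusion in Theorem~B). Lemma~\ref{Copson nontrivial} then gives $C^*X\neq\{0\}$ at once. For the first assertions of part (2), observe that if $C^*$ is bounded on $X$ then every $f\in X$ satisfies $C^*\abs{f}\in X$, i.e.\ $f\in C^*X$ with $\norm{f}_{C^*X}\leq\norm{C^*}_{X\to X}\norm{f}_X$; hence $X\hookrightarrow C^*X$. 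Since $\supp(X)=I$ this forces $\supp(C^*X)=I$, and in particular $C^*X\neq\{0\}$.

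The key step for the embeddings (i) and (ii) is to test against a weak unit. Let $f_0>0$ a.e.\ be a weak unit of $X$ and put $F:=C^*f_0\in X$. Then $F$ is nonincreasing, strictly positive, and finite a.e.\ (being an element of $X\subset L^0$), so $F(\lambda)<\infty$ for every $\lambda>0$. On $I=[0,1]$, for $0<\lambda<1$ set $\eta_\lambda:=\int_\lambda^1 f_0(t)/t\,\mathrm{d}t=F(\lambda)\in(0,\infty)$; monotonicity gives $F\geq\eta_\lambda\chi_{[0,\lambda]}$, whence $\chi_{[0,\lambda]}\in X$ by the ideal property, with $\norm{\chi_{[0,\lambda]}}_X\leq\eta_\lambda^{-1}\norm{F}_X$. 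Since $\abs{g}\leq\norm{g}_{L^\infty}\chi_{[0,\lambda]}$ this is exactly $L^\infty[0,1]|_{[0,\lambda]}\hookrightarrow X$. The same computation on $[0,\infty)$ with $\eta_N:=\int_N^\infty f_0(t)/t\,\mathrm{d}t$ yields $\chi_{[0,N]}\in X$ for every $N$, hence every bounded function with support of finite measure belongs to $X$; this gives $L^\infty_{\mathrm{fin}}[0,\infty)\subset X$. Note that the constants here depend on $\lambda$ (resp.\ on the set), which is precisely why the statement records an inclusion $\subset$ rather than a continuous embedding for the finite-measure part.

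For the Fatou upgrades I would exploit that $\chi_{[0,\lambda]}\uparrow\chi_{[0,1]}$ a.e.\ as $\lambda\uparrow1$ (and, on $[0,\infty)$, that any $g\in L^\infty_b$ is exhausted by $g\chi_{\{\abs{g}>1/n\}}\in L^\infty_{\mathrm{fin}}$), so that the Fatou property delivers $\chi_{[0,1]}\in X$, i.e.\ $L^\infty[0,1]\hookrightarrow X$, as soon as one has the uniform bound $\sup_{\lambda<1}\norm{\chi_{[0,\lambda]}}_X<\infty$. The main obstacle is exactly this uniform bound: the estimate from the previous paragraph is useless here, since $\eta_\lambda^{-1}\norm{F}_X$ blows up as $\lambda\to1$, and no image $C^*(\cdot)$ can dominate a constant near the endpoint (tail integrals vanish there). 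A genuinely different mechanism is therefore needed, and I expect it to be the delicate heart of the proof.

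My plan for that bound is to pass to the K\"othe dual. Since $X\in(FP)$, boundedness of $C^*$ on $X$ is equivalent to boundedness of $C$ on $X'$ (the duality remark preceding Theorem~D), while the Fatou property and $X=X''$ give the identity $\sup_{\lambda<1}\norm{\chi_{[0,\lambda]}}_X=\sup_{\norm{g}_{X'}\leq1}\int_0^1\abs{g}=\norm{\mathrm{id}\colon X'\to L^1[0,1]}$, so the required estimate is precisely the embedding $X'[0,1]\hookrightarrow L^1[0,1]$. I would thus try to deduce $X'\hookrightarrow L^1$ from boundedness of $C$ on $X'$, feeding in Theorem~D applied to $X'$ (which already supplies $\chi_{[\lambda,1]}\in X'$ for all $0<\lambda<1$) and the finiteness of the measure; the analogous reduction on $[0,\infty)$ governs the $L^\infty_b$-embedding. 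Capturing the \emph{full} mass of $X'$-functions near the endpoint $1$ — equivalently, ruling out that the dual norm degenerates there — is the step I anticipate to be hardest and where any additional structural hypothesis on $X$ would have to be used.
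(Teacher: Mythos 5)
Your proposal coincides with the paper's own proof on everything except the Fatou clauses. Part (1) is exactly the paper's argument (a symmetric space contains every $\chi_{[0,\lambda]}$, $0<\lambda<m(I)$, and then Lemma \ref{Copson nontrivial} applies); the statements $\supp(C^*X)=I$ and $C^*X\neq\{0\}$ are obtained in the paper precisely from $X\hookrightarrow C^*X$; and for the first halves of (i) and (ii) the paper likewise tests the Copson operator against a weak unit $f_0$, using $\delta_\lambda\chi_{[0,\lambda]}\leq C^*\abs{f_0}\in X$ together with the ideal property (your observation that $\delta_\lambda\in(0,\infty)$ because $C^*\abs{f_0}\in X\subset L^0$ is left implicit in the paper, and your remark explaining why (ii) only records an inclusion $\subset$ rather than a uniform embedding is accurate). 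Up to this point your write-up is correct and is essentially the paper's proof.

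The Fatou upgrades, however, you only sketch as a plan, so as a proof of the full statement your proposal has a genuine gap — but your diagnosis of where the difficulty sits is exactly right, and in fact sharper than the paper. The paper's proof of this step is the single sentence ``If, additionally, $X\in(FP)$, then we conclude that $\chi_{[0,1]}\in X$'' (resp., for (ii), ``it is enough to take a sequence $(f_n)\subset L^\infty_{\text{fin}}$ with $0\leq f_n\uparrow f$''), which silently presumes precisely the uniform bound $\sup_{\lambda<1}\norm{\chi_{[0,\lambda]}}_X<\infty$ (resp.\ $\sup_n\norm{f_n}_X<\infty$) that you isolate; without it the Fatou property gives nothing. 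Worse, the target of your duality reduction, ``$C$ bounded on $X'$ implies $X'\hookrightarrow L^1[0,1]$,'' is false in the stated generality, so this step cannot be closed by any argument: take $Y:=L^1(w)[0,1]$ with $w(t)=\frac{1-t}{t}$; then
\begin{equation*}
\int_t^1\frac{w(x)}{x}\,\mathrm{d}x=\frac{1-t}{t}-\ln\frac{1}{t}\leq w(t),
\end{equation*}
so $\norm{C}_{Y\to Y}\leq 1$, while $g(t)=\frac{t}{1-t}$ satisfies $\norm{g}_Y=1$ but $g\notin L^1[0,1]$. Equivalently, $X:=L^\infty\bigl(\frac{t}{1-t}\bigr)[0,1]$ has the Fatou property and a weak unit, and $C^*$ is bounded on $X$ with norm at most $1$, because $\norm{f}_X\leq 1$ gives
\begin{equation*}
C^*\abs{f}(x)\leq\int_x^1\frac{1-t}{t^2}\,\mathrm{d}t=\frac{1-x}{x}-\ln\frac{1}{x}\leq\frac{1-x}{x};
\end{equation*}
yet $\chi_{[0,1]}\notin X$, i.e.\ $L^\infty[0,1]\not\hookrightarrow X$, even though $\chi_{[0,\lambda]}\in X$ for every $\lambda<1$, consistently with the (correct) first half of (i). Thus the Fatou clause of (2)(i) fails as stated: your anticipation that ``any additional structural hypothesis on $X$ would have to be used'' is vindicated — under symmetry, for instance, the conclusion is immediate since every symmetric space on $[0,1]$ satisfies $L^\infty[0,1]\hookrightarrow X$ regardless of $C^*$. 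The paper's proof of the Fatou clause in (ii) suffers from the same lacuna (a weighted-$L^\infty$ counterexample is blocked there, since on $[0,\infty)$ boundedness of $C^*$ forces local boundedness of the weight, which is incompatible with the required integrability of $1/(tw(t))$ at $0$, so the truth of (ii) remains open on this route). In short: on the valid portion your proof is the paper's proof; on the remaining portion the missing step is not one you failed to find but, at least in case (i), a defect of the statement and of the paper's own argument.
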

\begin{proof}
If $X$ is a symmetric space on $I$, then $\chi_{[0,\lambda]} \in X$ for all $0 < \lambda < m(I)$,
so $C^*X \neq \{ 0 \}$, see Lemma \ref{Copson nontrivial}.
	
It is also clear that if the operator $C^*$ is bounded on $X$ then $X \hookrightarrow C^*X$ and
consequently $\supp(C^*X)=I$.
	
(i) Of course, the condition $L^\infty[0,1]|_{[0,\lambda]} \hookrightarrow X$ is equivalent to 
$\chi_{[0,\lambda]} \in X$. Take $0 < \lambda < 1$ and let $f_0$ be a weak unit in $X$. Then 
$\int_{\lambda}^{m(I)} \abs{f_0(t)}/t \mathrm{d}t := \delta > 0$ and proceeding as in the first part 
of the proof of Lemma \ref{Copson nontrivial} we get $\chi_{[0,\lambda]} \in X$. 
If, additionally, $X \in (FP)$, then we conclude that $\chi_{[0,1]} \in X$, i.e., $L^\infty[0,1] \hookrightarrow X$.
	
(ii) Similarly, as in the case (i), above we obtain that $L^\infty_{\text{fin}}[0,\infty) \subset X[0, \infty)$ 
(note only that $L^\infty_{\text{fin}}$ is not complete whence the inclusion $L^\infty_{\text{fin}} \subset L^\infty_b$ 
is not continuous). Suppose now that $X \in (FP)$ and take $f \in L^\infty_b[0,\infty)$. To show that 
$f \in X$ it is enough to take a sequence $(f_n) \subset L^\infty_\text{fin}[0,\infty)$ with $0 \leq f_n \uparrow f$.
\end{proof}

Many results, although we can probably say that almost all, in the theory of Ces\`aro and Copson function 
spaces are proved under the assumption that at least one of the operators $C$ or $C^{\ast}$ is bounded 
on $X$. As mentioned in the introduction, we are able to prove our results under the essentially weaker 
assumption (actually the weakest possible one) that the Ces\`aro or Copson function space is nontrivial.
In this context, it seems resonable to give several examples discussing the difference between these 
two assumptions, because many naturally appearing spaces have the property that the operator $T$, 
where $T = C$ or $T = C^*$, is not bounded on $X$ but $TX \neq \{ 0 \}$ or even $\supp(TX) = I$.

%%%%%%%%%%%%%% Ex 3.3
\begin{example} \label{Cesaro i Copson dla L1 i L8}
(a) Consider, as in \cite[Example 2]{LM15a}, the space $L^p(w_1)$ on $[0,\infty)$, where $1 < p < \infty$ and
$$
w_1(x) = \frac{1}{1-x}\chi_{[0,1)}(x) + \chi_{[1,\infty)}(x).
$$
Then $\supp(X) = [0,\infty)$, $\supp(CX) = [1,\infty)$ and $\supp(C^*X) = [0,1]$.
Put $X = L^1(w_2)$, where
$$
w_2 \colon I \ni x \mapsto \frac{1}{x}.
$$
Then it is clear that $\chi_{[0,\lambda]} \notin X$ for every $0 < \lambda < m(I)$, so  $C^*X = \{ 0 \}$.
Finally, take $X = L^\infty(w_3)$, where
$$
w_3 = \text{id}_I \colon I \ni x \mapsto x.
$$
Then $CX\equiv L^{1}$. On the other hand, if $C \colon X \rightarrow X$ is bounded, then 
$X\hookrightarrow CX$ but $L^{\infty }(w_3) \not\hookrightarrow L^{1}$ (just take $f(x) = 1/x$), 
so $C$ is not bounded on $L^\infty(w_3)$ and $\supp CX = I$.

(b) It is easy to see that the space $Ces_{1}[0,1]$ is just a weighted $L^{1}(w)[0,1]$ space,
where $w(t)=\ln (1/t)$ for $0<t\leq 1$. Indeed, we have
\begin{equation}
\int_{0}^{1}(\frac{1}{x}\int_{0}^{x}\left\vert f(t)\right\vert \mathrm{d}t) 
\mathrm{d}x=\int_{0}^{1}(\int_{t}^{1}\frac{\mathrm{d}x}{x})\left\vert
f(t)\right\vert \mathrm{d}t=\int_{0}^{1}\left\vert f(t)\right\vert \ln (
\frac{1}{t})\mathrm{d}t,  \label{Ces1 = L1ln(1/t)}
\end{equation}
see \cite[Theorem 1 (a)]{AM09}. Therefore, despite the fact that the Ces\`aro operator 
$C$ is not bounded on $L^{1}[0,1]$ (cf. Theorem C), we see again that
$\supp(Ces_1[0,1]) = [0,1]$. Thus, if $f\in Ces_{1}[0,1]$ and $\supp(f)\subset [a,b]$, 
where $0<a<b<1$, then 
\begin{equation}
\ln (\frac{1}{b})\left\Vert f\right\Vert _{L^{1}[0,1]}\leq \left\Vert
f\right\Vert _{Ces_{1}[0,1]}\leq \ln (\frac{1}{a})\left\Vert f\right\Vert
_{L^{1}[0,1]},  \label{nierownosc1dlaL1[0,1]}
\end{equation}
see also \cite[Lemma 1, inequality (4)]{AM08}. Equality (\ref{Ces1 = L1ln(1/t)}) shows by the way 
that $Ces_{1}[0,\infty )=\{0\}$, cf. \cite[Theorem 1 (a)]{AM09}.
	
(c) Clearly, $Cop_1 \equiv L^1$ and $Cop_\infty \equiv L^1(1/t)$ because
$$ 
\norm{f}_{Cop_1} = \int_I (\int_x^{m(I)} \frac{\abs{f(t)}}{t} \mathrm{d}t) \mathrm{d}x 
= \int_I (\int_0^t \mathrm{d}x) \frac{\abs{f(t)}}{t} \mathrm{d}t = \norm{f}_{L^1},
$$
and
$$ 
\norm{f}_{Cop_\infty} = \sup\limits_{x \in I} \int_x^{m(I)} \frac{\abs{f(t)}}{t} \mathrm{d}t 
= \int_0^{m(I)} \frac{\abs{f(t)}}{t} \mathrm{d}t = \norm{f}_{L^1(1/t)}.
$$
Again, $\supp(Cop_\infty) = I$ but the Copson operator $C^*$ is not bounded on $L^\infty$, 
see Theorem C.	
	
(d) Let $X$ be a symmetric space on $[0,1]$ with $p(X) = 1$. Then the Ces\`aro operator $C$ 
is not bounded on $X$ but $CX \neq \{ 0 \}$, cf. Theorem C and D. For example, if $X$ is the 
Zygmund space $L\log L[0,1]$ (see \cite[Definition 6.1, p. 243)]{BS88}), then $p(X) = 1$ (see 
\cite[Theorem 6.5, p. 247)]{BS88}).
	
(e) Consider a symmetric space $X$ on $I$ such that the Copson operator $C^{\ast}$ is not 
bounded on $X$. Then, by Lemma \ref{Copson nontrivial}, $C^{\ast}X \neq \left\{ 0\right\}$, 
because $\chi _{\lbrack 0,\lambda ]}\in X$ for each $0 < \lambda < m(I)$. In particular, we can 
take $X = L^{\Phi}$, where $L^\Phi$ is the Orlicz space generated by the Orlicz function $\Phi$ 
which does not satisfy the suitable $\Delta _{2}$-condition. Since $\Phi \notin \Delta _{2},$ 
so $q(L^{\Phi}) = \infty $ (see \cite[Proposition 2.5, p. 139]{LT79} and \cite[Theorem 3.2, p. 22]{Ma85}) 
and consequently $C^{\ast}$ is not bounded on $X$, cf. Theorem C.
	
(f) Suppose that $X$ is the Orlicz space $L^{\Phi}$ generated by the Orlicz function 
$$
\Phi(x) = x \log (1 + x).
$$
First, note that $p(L^{\Phi}) = \alpha_{\Phi}$, where $\alpha_{\Phi}$ is the lower Orlicz--Matuszewska 
index of $\Phi$ (see \cite[Proposition 2.5, p. 139 and Remark 2, p. 140]{LT79}). Moreover, it is not 
difficult to calculate that $\alpha_{\Phi}=1$ (cf. \cite[pp. 7--21]{Ma85}). Consequently, the operator 
$C$ is not bounded on $X$, cf. Theorem C. We claim that $CX\neq \left\{ 0\right\}$. It is clear when 
$I=\left[ 0,1\right]$. In fact, $L^{\Phi}$ is a symmetric space, so $\chi _{\lbrack 0,\lambda ]}\in X$ for 
each $0 < \lambda < 1$ and we can apply Theorem D. If $I=[0,\infty)$, according to Theorem D, 
we need to show that $(f_{\lambda } \colon x\mapsto \frac{1}{x}\chi _{\lbrack \lambda ,\infty )}(x)) 
\in L^{\Phi}$ for some $\lambda > 0$. Recall that $f \in L^{\Phi}$, whenever 
$\int_{0}^\infty \Phi \left( \gamma \left\vert f\left( x\right) \right\vert \right) \mathrm{d}x < \infty $ 
for some $\gamma > 0$ (see \cite{Ma89}). We have
\begin{equation*}
\int_{0}^\infty \Phi \left( \left\vert f_{\lambda }\left( x\right) \right\vert
\right) \mathrm{d}x = \int_{\lambda }^{\infty }\frac{1}{x}\log \left( 1+\frac{1}{x} 
\right) \mathrm{d}x \leq \int_{\lambda}^{\infty}\frac{1}{x^2}\mathrm{d}x = \frac{1}{\lambda} < \infty,
\end{equation*}
and the claim follows.
\end{example}

%%%%%%%%%% 4
\section{Copies of $\ell^1$, $\ell^\infty$, $L^1[0,1]$ and $L^\infty[0,1]$ in Ces\`aro, Copson 
and Tandori function spaces} \label{sekcja kopii l1}

The norms of the Ces\`aro and Copson function spaces are generated by a positive sublinear operator 
$T$, where $T$ stands for the Ces\`aro or the Copson operator, and by the norm of a Banach function 
space $X$. Thus, in a sense, the space $TX$ is a nontrivial mix of the space $L^1$ and $X$ and 
some similarities to both these spaces can be found in $TX$. We will make these statements more 
precise showing first that the Ces\`aro and Copson function spaces contain ``good" copies 
of $L^1[0,1]$ and $\ell^1$.

%%%%%%%%%%%% Lemma 4.1
\begin{lemma} \label{complementedL1}
{\it Let $X$ be a Banach function space on $I$ such that $CX \neq \{ 0 \}$. 
Then there are numbers $0 < a < b < m(I)$ such that
\begin{equation}
\left\Vert \frac{1}{x}\chi _{\lbrack b,m(I))}(x)\right\Vert
_{X}\left\Vert f\right\Vert _{L^{1}(I)}\leq \left\Vert f\right\Vert
_{CX}\leq \left\Vert \frac{1}{x}\chi _{\lbrack a,m(I))}(x)\right\Vert
_{X}\left\Vert f\right\Vert _{L^{1}(I)},  \label{nierownosc1}
\end{equation}
for all $f\in CX$ with $\supp(f)\subset \lbrack a,b]$.
In particular, the space $CX$ contains a complemented copy of $L^{1}[0,1]$.}
\end{lemma}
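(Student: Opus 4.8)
The plan is to select the interval $[a,b]$ from the nontriviality hypothesis, deduce both inequalities in (\ref{nierownosc1}) from elementary pointwise estimates for $C\abs{f}$ together with the ideal property of $X$, and then build an explicit bounded projection to obtain complementation.

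First I would fix the constants. Since $CX \neq \{0\}$, Theorem D yields a $\lambda$ with $\frac{1}{x}\chi_{[\lambda,m(I))}(x) \in X$: when $I = [0,1]$ this is because $\chi_{[\lambda,1]} \in X$ and $\frac{1}{x} \leq \frac{1}{\lambda}$ on $[\lambda,1)$, while for $I = [0,\infty)$ the function $\frac{1}{x}\chi_{[\lambda,\infty)}$ lies in $X$ directly. I set $a := \lambda$ and choose any $b$ with $a < b < m(I)$. Then $\norm{\frac{1}{x}\chi_{[a,m(I))}}_X < \infty$, and since $\frac{1}{x}\chi_{[b,m(I))} \leq \frac{1}{x}\chi_{[a,m(I))}$ the former also lies in $X$; being a.e.\ positive on a set of positive measure it has strictly positive norm. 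Thus both constants appearing in (\ref{nierownosc1}) are finite and positive.

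Next come the two inequalities. For $f$ with $\supp(f) \subset [a,b]$ one has $(C\abs{f})(x) = 0$ for $x < a$ and $(C\abs{f})(x) = \frac{1}{x}\int_a^{\min\{x,b\}}\abs{f(t)}\,\mathrm{d}t$ for $x \geq a$. Reading off that $\int_a^{\min\{x,b\}}\abs{f(t)}\,\mathrm{d}t \leq \norm{f}_{L^1(I)}$ always, with equality once $x \geq b$, gives the pointwise sandwich
\[
\frac{1}{x}\chi_{[b,m(I))}(x)\,\norm{f}_{L^1(I)} \leq (C\abs{f})(x) \leq \frac{1}{x}\chi_{[a,m(I))}(x)\,\norm{f}_{L^1(I)}.
\]
Applying the ideal property of $X$ to both sides yields (\ref{nierownosc1}) at once; note it also shows every $L^1$-function supported in $[a,b]$ belongs to $CX$, so the subspace $Y := \{f \in CX : \supp(f) \subset [a,b]\}$ is, via the identity map, isomorphic to $L^1[a,b] \simeq L^1[0,1]$.

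Finally, for complementation I would use the multiplication operator $P \colon f \mapsto f\chi_{[a,b]}$, which plainly satisfies $P^2 = P$ and $P|_Y = \mathrm{id}$, so everything reduces to boundedness of $P$ on $CX$. Here is the one step with real content: for arbitrary $f \in CX$ and every $x \geq b$ one has $(C\abs{f})(x) = \frac{1}{x}\int_0^x\abs{f(t)}\,\mathrm{d}t \geq \frac{1}{x}\int_a^b\abs{f(t)}\,\mathrm{d}t$, so the ideal property forces $\int_a^b\abs{f(t)}\,\mathrm{d}t \leq \norm{\frac{1}{x}\chi_{[b,m(I))}}_X^{-1}\norm{f}_{CX}$. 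Combining this with the already-proven upper estimate applied to $Pf$ (which is supported in $[a,b]$) gives $\norm{Pf}_{CX} \leq \norm{\frac{1}{x}\chi_{[a,m(I))}}_X\int_a^b\abs{f(t)}\,\mathrm{d}t \lesssim \norm{f}_{CX}$. The delicate point is precisely this control of the $L^1$-mass on $[a,b]$ by the whole $CX$-norm using only the tail behaviour on $[b,m(I))$; once $P$ is bounded it is a projection onto the closed subspace $Y \simeq L^1[0,1]$, which is the assertion.
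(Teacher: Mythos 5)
Your proof is correct and takes essentially the same route as the paper: the same choice of $a=\lambda$ and arbitrary $b\in(a,m(I))$ via Theorem D, the same pointwise sandwich $\frac{1}{x}\chi_{[b,m(I))}(x)\norm{f}_{L^1}\leq C\abs{f}(x)\leq \frac{1}{x}\chi_{[a,m(I))}(x)\norm{f}_{L^1}$ combined with the ideal property to get (\ref{nierownosc1}), and the same projection $P\colon f\mapsto f\chi_{[a,b]}$. The only difference is that you supply the explicit boundedness argument for $P$ (controlling $\int_a^b\abs{f(t)}\,\mathrm{d}t$ by $\norm{f}_{CX}$ through the tail of $C\abs{f}$ on $[b,m(I))$), a detail the paper's sketch merely asserts.
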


\begin{proof}
We will give only a sketch of the proof because, in fact, this lemma is just
a reformulation of Proposition 2.2 in \cite{ALM17} (cf. also \cite[Theorem 5.1 (b)]{AM14}).

Let $I=[0,1].$ First of all, $\chi _{\lbrack \lambda ,1]}\in X$ for some $
0<\lambda <1$ due to nontriviality of the space $CX$, see Theorem D. Take $a = \lambda$
and choose a number $ b \in (a,1) $.
Then $\frac{1}{x}\chi _{\lbrack
a ,1]}\left( x\right) \in X$ and, from the ideal property, also $\frac{1}{x}\chi _{\lbrack
b ,1]}\left( x\right) \in X$. Now, for such numbers $0 < a < b < 1$ and $f\in CX$ with $ 
\supp(f)\subset \lbrack a,b]$ it is obvious that 
\begin{equation*}
\frac{1}{x}\left\Vert f\right\Vert _{L^{1}[0,1]}\chi _{\lbrack b,1]}(x)\leq 
\frac{1}{x}\int_{0}^{x}\left\vert f(t)\right\vert \mathrm{d}t\leq \frac{1}{x} 
\left\Vert f\right\Vert _{L^{1}[0,1]}\chi _{\lbrack a,1]}(x),
\end{equation*} 
for any $0<x\in I$. Thus, 
\begin{equation}
\left\Vert \frac{1}{x}\chi _{\lbrack b,1]}(x)\right\Vert _{X}\left\Vert
f\right\Vert _{L^{1}[0,1]} \leq \left\Vert f\right\Vert _{CX}=\left\Vert 
\frac{1}{x}\int_{0}^{x}\left\vert f(t)\right\vert \mathrm{d}t\right\Vert _{X}
 \leq \left\Vert \frac{1}{x}\chi _{\lbrack a,1]}(x)\right\Vert
_{X}\left\Vert f\right\Vert _{L^{1}[0,1]},
\end{equation} 
and (i) follows.

At this point, it is clear that $\{f \in L^1[0,1] \colon \supp(f) \subset [a,b] \} \simeq L^{1}[0,1]$
and this copy of $L^{1}[0,1]$ is in fact complemented because the projection 
$P \colon f\mapsto f\chi _{\lbrack a,b]}$ is bounded.

In the case when $I=[0,\infty )$ the proof is completely analogous.
\end{proof}

%%%%%%%%%%%%% Lemma 4.2
\begin{lemma} \label{complemented L1 in C*X}
{\it Let $X$ be a Banach function space on $I$ such that $C^*X \neq \{ 0 \}$.
Then there are numbers $0 < a < b < m(I)$ such that
\begin{equation} \label{nierownosc Copson}
\norm{\chi_{[0,a]}}_X \norm{f}_{L^1(1/t)(I)} \leq \norm{f}_{C^*X} \leq \norm{\chi_{[0,b]}}_X \norm{f}_{L^1(1/t)(I)},
\end{equation}
for $f\in C^*X$ with $\supp(f) \subset [a,b]$. In particular, the space $C^*X$ contains a 
complemented copy of $L^{1}[0,1]$.}
\end{lemma}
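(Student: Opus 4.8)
**Proof plan for Lemma \ref{complemented L1 in C*X}.**

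The plan is to mimic the structure of the proof of Lemma \ref{complementedL1}, but now for the Copson operator $C^*$ instead of the Ces\`aro operator $C$, and with the reference measure $L^1$ replaced by the weighted space $L^1(1/t)$ (this is exactly the weight that appears naturally for $C^*$, as one already sees from Example \ref{Cesaro i Copson dla L1 i L8}(c), where $Cop_\infty \equiv L^1(1/t)$). First I would use the nontriviality hypothesis $C^*X \neq \{0\}$ together with Lemma \ref{Copson nontrivial} to extract a number $0 < \lambda < m(I)$ such that $\chi_{[0,\lambda]} \in X$. I would then fix $0 < a < b < \lambda$, so that by the ideal property both $\chi_{[0,a]} \in X$ and $\chi_{[0,b]} \in X$.

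The heart of the argument is a pointwise two-sided sandwich for $C^*|f|$ when $\operatorname{supp}(f) \subset [a,b]$. For such an $f$ and for $x \in I$ one has
\begin{equation*}
C^*\abs{f}(x) = \int_{I \cap [x,\infty)} \frac{\abs{f(t)}}{t} \mathrm{d}t = \int_{[a,b]\cap[x,\infty)} \frac{\abs{f(t)}}{t}\mathrm{d}t.
\end{equation*}
For $x \in [0,a]$ the lower limit of integration is effectively $a$, so the whole mass is captured and $C^*\abs{f}(x) = \norm{f}_{L^1(1/t)}$ there; for $x > b$ the integral vanishes. This yields the pointwise estimate
\begin{equation*}
\norm{f}_{L^1(1/t)}\,\chi_{[0,a]}(x) \leq C^*\abs{f}(x) \leq \norm{f}_{L^1(1/t)}\,\chi_{[0,b]}(x),
\end{equation*}
valid for all $x \in I$. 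Applying the monotone norm $\norm{\cdot}_X$ (using the ideal property) and recalling $\norm{f}_{C^*X} = \norm{C^*\abs{f}}_X$ gives inequality (\ref{nierownosc Copson}) at once.

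For the complementation statement I would argue exactly as in Lemma \ref{complementedL1}: inequality (\ref{nierownosc Copson}) shows that on the subspace $\{f : \operatorname{supp}(f) \subset [a,b]\}$ the norm $\norm{\cdot}_{C^*X}$ is equivalent to $\norm{\cdot}_{L^1(1/t)}$, and since $1/t$ is bounded above and below on $[a,b]$, the latter is in turn equivalent to $\norm{\cdot}_{L^1}$; hence this subspace is isomorphic to $L^1[a,b] \simeq L^1[0,1]$. The copy is complemented because the multiplication projection $P\colon f \mapsto f\chi_{[a,b]}$ is bounded on $C^*X$ (this is immediate from the ideal property, as $\abs{f\chi_{[a,b]}} \leq \abs{f}$). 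I do not expect any real obstacle here; the only point that needs a little care compared with the Ces\`aro case is the role of the weight $1/t$, since $C^*$ integrates against $\mathrm{d}t/t$ rather than $\mathrm{d}t$, but this is precisely why $L^1(1/t)$ rather than $L^1$ is the correct reference space in the sandwich, and on the fixed interval $[a,b]$ the two are harmlessly equivalent.
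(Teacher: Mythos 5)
Your proposal is correct and takes essentially the same route as the paper's own proof: extracting $\chi_{[0,\lambda]} \in X$ from Lemma \ref{Copson nontrivial}, establishing the pointwise sandwich $\norm{f}_{L^1(1/t)}\chi_{[0,a]}(x) \leq C^*\abs{f}(x) \leq \norm{f}_{L^1(1/t)}\chi_{[0,b]}(x)$ for $\supp(f) \subset [a,b]$ and applying the monotone norm of $X$, and then obtaining the complemented copy via the projection $P \colon f \mapsto f\chi_{[a,b]}$ together with the equivalence of the $L^1(1/t)$ and $L^1$ norms on $[a,b]$. The only cosmetic difference is that the paper sets $b = \lambda$ and picks $a \in (0,b)$, while you take $0 < a < b < \lambda$ and invoke the ideal property, which changes nothing of substance.
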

\begin{proof}
We will give the proof only if $I = [0,1]$. The remaining case is analogous.
	
Suppose $I = [0,1]$. Thanks to the assumption that the Copson function space $C^*X$ is 
nontrivial we get $\chi_{[0,\lambda]} \in X$ for some $0 < \lambda < 1$, see 
Lemma \ref{Copson nontrivial}. Take $b = \lambda$ and choose a number $a \in (0,b)$.
If $f\in C^*X$ and $\supp(f) \subset [a,b]$, then we have
\begin{equation*}
C^* \abs{f}(x) = \int^1_x \frac{\abs{f(t)}}{t} \mathrm{d}t \geq \int_a^b \frac{\abs{f(t)}}{t} \mathrm{d}t \chi_{[0,a]}(x)
= \norm{f}_{L^1(1/t)[0,1]} \chi_{[0,a]}(x).
\end{equation*}
Moreover,
\begin{equation*}
C^* \abs{f}(x) = \int^1_x \frac{\abs{f(t)}}{t} \mathrm{d}t \leq \int_a^b \frac{\abs{f(t)}}{t} \mathrm{d}t \chi_{[0,b]}(x)
= \norm{f}_{L^1(1/t)[0,1]} \chi_{[0,b]}(x).
\end{equation*}
Putting together the above inequalities we obtain (\ref{nierownosc Copson}).
	
The last part of this lemma is clear since
$$
\{ f \in L^1(1/t)[0,1] \colon \supp(f) \subset [a,b] \} = \{ f \in L^1[0,1] \colon \supp(f) \subset [a,b] \} \simeq L^1[0,1],
$$
whenever $0 < a < b < 1$ (because $\frac{1}{b} \norm{f\chi_{[a,b]}}_{L^1[0,1]} \leq 
\norm{f\chi_{[a,b]}}_{L^1(1/t)[0,1]} \leq \frac{1}{a} \norm{f\chi_{[a,b]}}_{L^1[0,1]}$ for $f \in L^1(1/t)[0,1]$) 
and it is enough to take the projection $P \colon f \mapsto f\chi_{[a,b]}$.
\end{proof}

If additionally the Ces\`aro or the Copson operator is bounded on the Banach function space $X$, 
then we can deduce a little stronger versions of Lemma \ref{complementedL1} and 
Lemma \ref{complemented L1 in C*X}, respectively. More precisely, if the Ces\`aro operator $C$ is 
bounded on $X$, then it follows from Theorem D that $\supp(CX) = I$ and consequently, 
Lemma \ref{complementedL1} holds true for all $0 < a < b < m(I)$. Of course, due to Corollary 
\ref{Copson bounded and embeddings}, analogous remark holds true also for every nontrivial 
Copson function space $C^*X$.
	
It is clear, that every nontrivial Ces\`aro and Copson function space contains also a complemented 
copy of $\ell^{1}$ (simply because the space $L^1[0,1]$ contains such a copy). Moreover, James's 
distortion theorem for $\ell^1$ states that a Banach space $X$ contains an isomorphic copy of 
$\ell^1$ if and only if it contains an almost isometric copy of $\ell^1$, that is, for every 
$0 < \varepsilon < 1$, there exists a sequence $(x_n) \subset X$ such that
$(1 - \varepsilon)\sum_{n=1}^\infty \abs{\alpha_n} \leq \norm{\sum_{n=1}^\infty \alpha_{n}x_n}_X 
\leq \sum_{n=1}^\infty \abs{\alpha_n}$, for all $\alpha = (\alpha_n) \in \ell^1$. Therefore, as 
an immediate conclusion from the complemented version of James's distortion theorem 
\cite[Theorem 2]{DRT98} we obtain the following result.

%%%%%%%%%%%%%%% Cor 4.3
\begin{corollary} \label{CX-contains compl of l1} 
{\it Let $T = C$ or $T = C^*$. If $X$ is a Banach function space on $I$ such that $TX \neq \{ 0 \}$, 
then the space $TX$ contains a complemented almost isometric copy of $\ell ^{1}$. In particular, 
the space $TX$ is not reflexive.}
\end{corollary}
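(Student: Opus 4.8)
The plan is to reduce everything to the complemented copy of $L^1[0,1]$ that is already in hand and then invoke the complemented version of James's distortion theorem. First I would observe that, according to whether $T = C$ or $T = C^*$, the hypothesis $TX \neq \{0\}$ together with Lemma \ref{complementedL1} (respectively Lemma \ref{complemented L1 in C*X}) guarantees that $TX$ contains a complemented copy of $L^1[0,1]$. In both lemmas the complementing projection is explicit, namely the restriction $P \colon f \mapsto f\chi_{[a,b]}$ onto a suitable subinterval $[a,b] \subset I$, so at this stage no further work is needed beyond quoting the appropriate lemma.

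Next I would recall the elementary fact that $\ell^1$ embeds complementably and isometrically into $L^1[0,1]$. Concretely, one selects pairwise disjoint sets $A_n \subset [a,b]$ of positive measure and sets $g_n := \chi_{A_n}/m(A_n)$; disjointness of the supports gives $\norm{\sum_n \alpha_n g_n}_{L^1} = \sum_n \abs{\alpha_n}$, so the closed linear span of $(g_n)$ is an isometric copy of $\ell^1$, and the conditional expectation onto the $\sigma$-algebra generated by the $A_n$ furnishes a norm-one projection of $L^1[0,1]$ onto this span. Since being complemented is transitive — a complemented subspace of a complemented subspace is complemented, by composing the two projections — it follows that $TX$ itself contains a complemented copy of $\ell^1$.

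Finally I would upgrade this to the almost isometric statement and read off non-reflexivity. By James's distortion theorem a Banach space containing an isomorphic copy of $\ell^1$ contains, for each $0 < \varepsilon < 1$, an almost isometric copy; its complemented refinement \cite[Theorem 2]{DRT98} replaces a complemented copy of $\ell^1$ by a complemented \emph{almost isometric} copy, which applied to the copy built above gives the first assertion. Non-reflexivity is then immediate, since $TX$ has a complemented, hence closed, subspace isomorphic to the non-reflexive space $\ell^1$, and reflexivity passes to closed subspaces. I do not expect a genuine obstacle here, as the statement is merely assembled from results already established; the only points needing a little care are verifying that complementation composes correctly and that the copy of $\ell^1$ we produce fits the hypotheses of the complemented distortion theorem \cite[Theorem 2]{DRT98}.
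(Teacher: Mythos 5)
Your argument is correct and follows essentially the same route as the paper: both pass from the complemented copy of $L^1[0,1]$ given by Lemma \ref{complementedL1} (resp.\ Lemma \ref{complemented L1 in C*X}) to a complemented copy of $\ell^1$ inside it, and then invoke the complemented version of James's distortion theorem \cite[Theorem 2]{DRT98}, with non-reflexivity read off from the closed subspace isomorphic to $\ell^1$. The only difference is that you spell out the standard span-of-normalized-indicators construction and the conditional-expectation projection, which the paper leaves implicit with the remark that $L^1[0,1]$ ``contains such a copy.''
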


It turns out that we can prove even a stronger versions of our Lemmas \ref{complementedL1} 
and \ref{complemented L1 in C*X} and Proposition 2.2 from \cite{ALM17}.

%%%%%%%%%%%%%%% Thm 4.4
\begin{theorem} \label{general renorming}
{\it Let $X$ be a Banach space and assume that $X$ contains a complemented copy of a Banach 
space $Z$. Then there exists an equivalent norm on $X$ such that $X$ contains an isometric 
copy of $Z$. In particular, if $X$ is a Banach function space and $T = C$ or $T = C^*$, then:
\begin{enumerate}
\item [(i)] the space $TX$ can be renormed to contains an isometric copy of $L^1[0,1]$, 
whenever $TX \neq \{ 0 \}$.
\item [(ii)] every nontrivial Tandori function space $\tandori{X}$ can be renormed to contain 
an isometric copy of $L^\infty[0,1]$.
\end{enumerate}
	}
\end{theorem}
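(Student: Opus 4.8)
The plan is to prove the abstract renorming statement first and then simply feed the two applications into it. Since $X$ contains a complemented copy of $Z$, there is a closed subspace $Y \subseteq X$, an isomorphism $\phi \colon Z \to Y$, and a bounded projection $P \colon X \to X$ with range $Y$; writing $W := \ker P$ we obtain the topological direct sum $X = Y \oplus W$. First I would transport the norm of $Z$ onto $Y$ by setting $\vertiii{y}_0 := \norm{\phi^{-1}y}_Z$ for $y \in Y$, so that $\phi \colon (Z, \norm{\cdot}_Z) \to (Y, \vertiii{\cdot}_0)$ becomes an isometry. Then I would glue this to the original norm off $Y$ by declaring $\vertiii{x} := \vertiii{Px}_0 + \norm{(I-P)x}_X$ for $x \in X$; this is a norm because $P$ and $I-P$ are complementary projections, and for $x \in Y$ it reduces to $\vertiii{x} = \norm{\phi^{-1}x}_Z$. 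Hence $\phi$ identifies $Z$ isometrically with $Y$ sitting inside $(X, \vertiii{\cdot})$.

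It then remains to check that $\vertiii{\cdot}$ is equivalent to $\norm{\cdot}_X$. Because $\phi$ is an isomorphism there are constants $0 < c \le C$ with $c\vertiii{y}_0 \le \norm{y}_X \le C\vertiii{y}_0$ for $y \in Y$. Using boundedness of $P$ (and hence of $I-P$) one gets $\vertiii{x} \le \bigl(c^{-1}\norm{P} + \norm{I-P}\bigr)\norm{x}_X$, while in the other direction $\norm{x}_X \le \norm{Px}_X + \norm{(I-P)x}_X \le \max\{C,1\}\,\vertiii{x}$. This proves the general statement.

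For (i) no further work is needed: Lemma \ref{complementedL1} (when $T = C$) and Lemma \ref{complemented L1 in C*X} (when $T = C^*$) already exhibit, for every nontrivial $TX$, a complemented copy of $L^1[0,1]$ — namely $\{f \in TX \colon \supp(f) \subset [a,b]\}$ with the bounded projection $f \mapsto f\chi_{[a,b]}$. Applying the abstract result with $Z = L^1[0,1]$ gives (i).

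For (ii) the missing ingredient is a complemented copy of $L^\infty[0,1]$ in a nontrivial $\tandori{X}$, which I would produce in analogy with Lemma \ref{complementedL1}. Nontriviality of $\tandori{X}$ forces $\chi_{[0,b]} \in X$ for some $b > 0$ (the nonincreasing majorant of any nonzero element is a nonzero nonincreasing function, hence dominates a multiple of some $\chi_{[0,b]}$), and then $\chi_{[0,a]} \in X$ for every $0 < a < b$ by the ideal property. Fixing such $a < b$, every $f$ with $\supp(f) \subset [a,b]$ satisfies $\norm{f}_\infty \chi_{[0,a]} \le \tandori{f} \le \norm{f}_\infty \chi_{[0,b]}$, hence $\norm{f}_\infty \norm{\chi_{[0,a]}}_X \le \norm{f}_{\tandori{X}} \le \norm{f}_\infty \norm{\chi_{[0,b]}}_X$; this identifies $\{f \in \tandori{X} \colon \supp(f) \subset [a,b]\}$ with $L^\infty[a,b] \cong L^\infty[0,1]$. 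Moreover the truncation $f \mapsto f\chi_{[a,b]}$ is contractive on $\tandori{X}$, since $\abs{f\chi_{[a,b]}} \le \abs{f}$ pointwise and the majorant operation is monotone, so this copy of $L^\infty$ is complemented and the abstract result with $Z = L^\infty[0,1]$ yields (ii). I expect the abstract renorming to be entirely routine; the only genuinely delicate point is this last step of (ii), where I must build the complemented $L^\infty$-copy by hand, the key observation being that monotonicity of $f \mapsto \tandori{f}$ makes the natural truncation projection contractive and thereby upgrades an isomorphic copy to a complemented one.
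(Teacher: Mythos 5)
Your proposal is correct and follows essentially the same route as the paper: the identical renorming $\vertiii{x} := \norm{\phi^{-1}Px}_Z + \norm{(\mathrm{id}-P)x}_X$ with the same equivalence estimates, the same appeal to Lemmas \ref{complementedL1} and \ref{complemented L1 in C*X} for (i), and for (ii) the same initial observation that nontriviality of $\tandori{X}$ forces $\chi_{[0,b]} \in X$. The only deviation is that where the paper delegates the construction of the complemented copy of $L^\infty[0,1]$ to the argument of Proposition 2.2 in \cite{ALM17}, you write it out explicitly via the two-sided estimate $\norm{f}_{L^\infty}\chi_{[0,a]} \leq \tandori{f} \leq \norm{f}_{L^\infty}\chi_{[0,b]}$ for $\supp(f) \subset [a,b]$ together with the contractive truncation projection $f \mapsto f\chi_{[a,b]}$ --- a correct, self-contained filling-in of the cited step.
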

\begin{proof}
First, let $P \colon X \rightarrow X$ be a projection onto $Y_1 \subset X$ and $T$ be an isomorphism 
from $Z$ onto $Y_1$. Consequently, we have the following diagrams
\begin{equation*}
	\begin{tikzcd}
	X \arrow[r, "P"] \arrow[dr, dashrightarrow]
	& Y_1 \arrow[d, "T^{-1}"]\\
	& Z
	\end{tikzcd}
	\quad \text{ and } \quad
	\begin{tikzcd}
	X \arrow[r, "\text{id}_X - P"] \arrow[dr, dashrightarrow]
	& Y_2 \arrow[d, equal]\\
	& Y_2
	\end{tikzcd}
\end{equation*}
that is, $X \simeq Y_1 \oplus Y_2 \simeq Z \oplus Y_2$. 
We will introduce a new norm $\vertiii{\cdot}_X$ on the space $X$ which is defined as
\begin{equation}
\vertiii{x}_X := \norm{T^{-1}Px}_Z + \norm{(\text{id}_X - P)x}_X.
\end{equation}
It turns out that this norm is equivalent to the original one. In fact, 
\begin{align*}
\vertiii{x}_X & \leq \norm{T^{-1}}_{Y \rightarrow Z}\norm{Px}_Y + (1 + \norm{P})\norm{x}_X \\
 & \leq \norm{T^{-1}}_{Y \rightarrow Z}\norm{P}\norm{x}_X + (1 + \norm{P})\norm{x}_X \\
& \leq (1 + \norm{P} + \norm{T^{-1}}_{Y \rightarrow Z}\norm{P})\norm{x}_X.
\end{align*}
On the other hand,
\begin{align*}
\norm{x}_X & = \norm{Px + (\text{id}_X - P)x}_X \\
& \leq \norm{Px}_Y + \norm{(\text{id}_X - P)x}_X \\
& \lesssim \norm{T^{-1}Px}_Z + \norm{(\text{id}_X - P)x}_X = \vertiii{x}_X.
\end{align*}
Combining the above inequalities, we see that the norm $\vertiii{\cdot}_X$ is equivalent 
to the norm $\norm{\cdot}_X$. Moreover, if $y \in Y$ then
\begin{equation}
\vertiii{y}_X = \norm{T^{-1}Py}_Z + \norm{(\text{id}_X - P)y}_X = \norm{T^{-1}Py}_Z = \norm{T^{-1}y}_Z,
\end{equation}
which means that $T \colon Z \rightarrow (Y, \vertiii{\cdot}_X)$ is an isometry.
	
(i) This is clear due to Lemmas \ref{complementedL1} and \ref{complemented L1 in C*X} and the 
first part of the proof.
	
(ii) Let's start with a simple observation. If $X$ is a Banach function spaces on $I$, then nontriviality 
of the space $\tandori{X}$ is equivalent to the statement that $\chi_{[0,\lambda]} \in X$ for some 
$0 < \lambda < m(I)$. In fact, if $\tandori{X} \neq \{ 0 \}$, so we can find an element $f \in \tandori{X}$ 
such that $\abs{f(x)} > 0$ for $x \in A \subset I$ and $m(A) > 0$. 
Setting $B_n := \{ x \in A \colon \abs{f(x)} > 1/n \}$, where $n \in \mathbb{N}$, we see that there exists 
$n_0 \in \mathbb{N}$ with $m(B_{n_0}) > 0$. Therefore, we have
$$
\frac{1}{n_0} \chi_{[0,m(B_{n_0})]} \leq \tandori{f\chi_{B_{n_0}}} \leq \tandori{f} \in X,
$$
that is, $\chi_{[0,\lambda]} \in X$ for $\lambda = m(B_{n_0})$. The second implication is clear. Now,
since $\tandori{X} \neq \{ 0 \}$, it follows that $(0,\lambda) \subset \supp(\tandori{X})$ for some 
$0 < \lambda < m(I)$. Using the same argument as in \cite[Proposition 2.2]{ALM17} but for 
$0 < a < b < \lambda$ we can prove that the Tandori space $\tandori{X}$ contains a complemented 
copy of $L^\infty[0,1]$. To finish the proof it is enough to apply once again the first part.
\end{proof}

Let us note that if $X$ is a Banach function space on $I$ with $CX \neq \{ 0 \}$,
then exactly as we just did in Theorem \ref{general renorming}, we get the following
diagram
\begin{equation}
	\begin{tikzcd}
		CX \arrow[r, "P"]
		\arrow[d, dashrightarrow]
		& Y_1 \arrow[d, "\text{id}"] \\
		L_1[0,1] \arrow[r, leftarrow, "Q"]
		& L_1[a,b]
	\end{tikzcd}
\end{equation}
where $P \colon f \mapsto f|_{[a,b]}$ for $0 < a < b < m(I)$ is a bounded projection,
$Y_1 = L^1[a,b] := \{f \in L^1(I) \colon \supp(f) \subset [a,b] \}$, the mapping $Q$ is a linear isometry
between $L^1[a,b]$ and $L^1[0,1]$ and
$$\vertiii{f}_{CX} = \norm{f\chi_{[a,b]}}_{L^1} + \norm{f\chi_{(0,a) \cup (b,m(I))}}_{CX},$$
for $f \in CX$. Now, if we take $X = L^p$ for $1 \leq p < \infty$ if $I = [0,1]$ and $1 < p < \infty$ 
if $I = [0,\infty)$, then we obtain the Astashkin--Maligranda result from \cite[Lemma 4]{AM13b} 
concerning analogous renormings of classical Ces\`aro function spaces $Ces_p$.
\smallskip

Recall that a Banach function space $X$ contains an {\it order asymptotically isometric copy of $\ell ^{1}$},
whenever there is a sequence $\left(f_{n}\right) \subset X$ with pairwise disjoint supports and a sequence
$\left( \varepsilon _{n}\right) \subset \left( 0,1\right) $ such that $\varepsilon _{n}\rightarrow 0$ and
\begin{equation}
\sum_{n=1}^{\infty }\left( 1-\varepsilon _{n}\right) \left\vert \alpha
_{n}\right\vert \leq \left\Vert \sum_{n=1}^{\infty }\alpha
_{n}f_{n}\right\Vert _{X}\leq \sum_{n=1}^{\infty }\left\vert \alpha
_{n}\right\vert,
\end{equation}
for each $\alpha =\left( \alpha _{n}\right) \in \ell^{1}$. This notion was introduced by Dowling--Lennard 
in \cite[Definition 1.1]{DL97} and used to show that every nonreflexive subspace of $L^1[0,1]$ 
fails the fixed point property.

The notion of an asymptotically isometric copy of $\ell^1$ is closely related to an almost isometric copy of $\ell^1$
and consequently to James's distortion theorem (see \cite[Question]{DJLT97} and \cite[p. 270]{KS01}). However,
Dowling--Johnson--Lennard--Turett \cite{DJLT97} gave an example of a renorming of the space $\ell^1$ which 
contains no asymptotically isometric copy of $\ell^1$. We will further extend the class of spaces which contains 
an asymptotically isometric copy of $\ell^1$ showing that nontrivial Ces\`aro and Copson function spaces 
always contain such a copy. Note that for $Ces_p$-spaces such a claim has been proved in 
\cite[Theorems 1 and 2]{AM08}. It would seem naturally to look for a generalization of this result for symmetric 
spaces first (or even first for Orlicz spaces). Amazingly, it turns out that symmetry of the space $X$ is not important in our proof.

Before giving the proof, for $X$ being a Banach function space on $I$, let us define a function $F_{X}$ 
as follows
\begin{equation}
F_{X}:=F_{X[0,1]} \colon I\ni \lambda \mapsto \left\Vert \frac{1}{x}\chi
_{(\lambda ,1]}(x)\right\Vert _{X}\in \lbrack 0,\infty ],  \label{F}
\end{equation} 
if $I=[0,1]$, and
\begin{equation}
F_{X}:=F_{X[0,\infty )} \colon I\ni \lambda \mapsto \left\Vert \frac{1}{x}\chi
_{(\lambda ,\infty )}(x)\right\Vert _{X}\in \lbrack 0,\infty ],
\label{G}
\end{equation} 
if $I=[0,\infty )$.

%%%%%%%%%%%%%%%%%% Thm 4.5
\begin{theorem} \label{AIC-l1} \label{as kopia l1}
{\it Let $X$ be a Banach function space on $I$ such that the Ces\`aro function space $CX$ is nontrivial.
	Then the space $CX$ contains an order asymptotically isometric copy of $\ell ^{1}$.}
\end{theorem}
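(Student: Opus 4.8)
The plan is to build, in a small right neighbourhood of a single well-chosen point $c$, a sequence of bumps with pairwise disjoint supports shrinking to $c$, and to read off the $CX$-norm on their span from the local comparison with $L^1$ furnished by Lemma~\ref{complementedL1}. Since $CX\neq\{0\}$, Theorem~D provides $\lambda\in(0,m(I))$ with $\frac{1}{x}\chi_{[\lambda,m(I))}\in X$, so $F_X(\lambda)<\infty$; as the function $F_X$ from \eqref{F}--\eqref{G} is nonincreasing and strictly positive on $(0,m(I))$, every $c\in(\lambda,m(I))$ satisfies $0<F_X(c)<\infty$. I would fix such a $c$ at which $F_X$ is continuous, the continuity of $F_X$ being precisely the issue analysed in the Appendix.

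Using continuity of $F_X$ at $c$, I would choose pairwise disjoint intervals $[a_n,b_n]\subset(c,m(I))$ with $b_1>a_1\ge b_2>a_2\ge\cdots$ and $a_n,b_n\downarrow c$, so that $F_X(a_n),F_X(b_n)\to F_X(c)$, and pick $f_n$ with $\supp(f_n)\subset[a_n,b_n]$ normalized by $\norm{f_n}_{CX}=1$. The local estimate of Lemma~\ref{complementedL1}, valid on every interval on which $\frac1x\chi$ lies in $X$, gives $F_X(b_n)\norm{f_n}_{L^1}\le 1\le F_X(a_n)\norm{f_n}_{L^1}$, hence $\norm{f_n}_{L^1}\to 1/F_X(c)$. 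Disjointness yields $\abs{\sum_n\alpha_nf_n}=\sum_n\abs{\alpha_n}\abs{f_n}$ and, $C$ being positive and linear, $\norm{\sum_n\alpha_nf_n}_{CX}=\norm{\sum_n\abs{\alpha_n}C\abs{f_n}}_X\le\sum_n\abs{\alpha_n}\norm{f_n}_{CX}=\sum_n\abs{\alpha_n}$; this is the upper estimate, and the disjoint supports make the resulting copy an order copy.

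For the lower estimate I would keep only the tails: since $C\abs{f_n}(x)=\frac1x\norm{f_n}_{L^1}$ for $x\ge b_n$, nonnegativity of the discarded parts gives $\sum_n\abs{\alpha_n}C\abs{f_n}\ge h:=\sum_n\abs{\alpha_n}\norm{f_n}_{L^1}\frac1x\chi_{[b_n,m(I))}$. One estimates $\norm{h}_X$ from below by testing against a nonnegative $\phi\in X'$ with $\norm{\phi}_{X'}\le1$ that nearly norms $\frac1x\chi_{[c,m(I))}$: setting $1-\varepsilon_n:=\norm{f_n}_{L^1}\int_I\frac1x\chi_{[b_n,m(I))}\phi\,\mathrm{d}x$, the bounds $\int_I\frac1x\chi_{[b_n,m(I))}\phi\,\mathrm{d}x\le F_X(b_n)$ and $\norm{f_n}_{L^1}\le1/F_X(b_n)$ force $\varepsilon_n\ge0$, while $\norm{f_n}_{L^1}\to1/F_X(c)$ fixes the first factor; that $\varepsilon_n\to0$ will follow once $\phi$ is chosen so that $\int_I\frac1x\chi_{[b_n,m(I))}\phi\,\mathrm{d}x$ is driven up to $F_X(b_n)$, which I discuss next. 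Granting this, $\norm{\sum_n\alpha_nf_n}_{CX}\ge\norm{h}_X\ge\int_Ih\phi\,\mathrm{d}x=\sum_n(1-\varepsilon_n)\abs{\alpha_n}$, the required lower estimate.

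The genuinely delicate point is this lower estimate, and inside it the requirement $\varepsilon_n\to0$: one must make a single nonnegative functional asymptotically norm all the nearby tails $\frac1x\chi_{[b_n,m(I))}$ simultaneously, equivalently drive the ratio $\bigl(\int_I\frac1x\chi_{[b_n,m(I))}\phi\,\mathrm{d}x\bigr)/F_X(b_n)$ to $1$. For non--order-continuous $X$ (for instance $X=L^\infty$, where $CX=Ces_\infty$) the tails do \emph{not} converge in $X$-norm to $\frac1x\chi_{[c,m(I))}$, so this cannot be read off from norm continuity and rests entirely on the regularity of the scalar function $F_X$ near $c$; securing it, together with the monotone approach $F_X(b_n)\uparrow F_X(c)$ and a suitable rate for $b_n\downarrow c$, is exactly the content deferred to the Appendix. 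The case $I=[0,\infty)$ is identical, using \eqref{G} in place of \eqref{F}.
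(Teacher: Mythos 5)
Your setup --- choosing a continuity point of the monotone function $F_X$, disjointly supported normalized bumps, the trivial upper estimate, and the two-sided $L^1$ comparison from Lemma \ref{complementedL1} --- matches the paper, but your lower estimate has a genuine gap, and the root cause is that you oriented the construction the wrong way. Because your supports $[a_n,b_n]$ shrink to $c$ \emph{from the right}, the tails $\frac{1}{x}\chi_{[b_n,m(I))}(x)$ have variable left endpoints $b_n\downarrow c$; their common part $\frac{1}{x}\chi_{[b_1,m(I))}(x)$ only produces the constant $F_X(b_1)/F_X(c)<1$, which is why you are forced into the duality argument. That argument cannot deliver $\varepsilon_n\to 0$: for a single fixed $\phi\in X'$ with $\norm{\phi}_{X'}\leq 1$ one only gets $\int_I\frac{1}{x}\chi_{[b_n,m(I))}(x)\phi(x)\,\mathrm{d}x \uparrow \int_I\frac{1}{x}\chi_{(c,m(I))}(x)\phi(x)\,\mathrm{d}x \leq \norm{\frac{1}{x}\chi_{(c,m(I))}(x)}_{X''}\leq F_X(c)$, with a deficit that is fixed once $\phi$ is fixed (the supremum over $\phi$ is generally not attained, and since the theorem does not assume the Fatou property, testing against $X'$ recovers only the $X''$-norm, which may be strictly smaller than $F_X(c)$). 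Hence $\limsup_n\varepsilon_n>0$, and what your argument actually yields is, for each $\delta>0$, a $(1-\delta)$-almost isometric copy of $\ell^1$ --- which already follows from James's distortion theorem and Corollary \ref{CX-contains compl of l1} --- not the asymptotically isometric copy the theorem asserts (recall the Dowling--Johnson--Lennard--Turett example shows the two notions genuinely differ). You also defer the missing step to the Appendix, but Lemma \ref{Function F} there assumes $C$ bounded on $X$ together with order continuity, or symmetry with $X\not\hookrightarrow L^\infty$ --- hypotheses not available under the sole assumption $CX\neq\{0\}$ --- and in any case the Appendix concerns continuity of $F_X$, not your simultaneous-norming problem; meanwhile the mere \emph{existence} of a continuity point needs no Appendix: $F_X$ is nonincreasing, hence continuous off a countable set, which is all the paper uses.

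The paper's proof fixes exactly this point by a purely lattice-theoretic device with no duality: approach the continuity point $a_0$ \emph{from below}, taking $a_n\uparrow a_0$ and $g_n=\chi_{(a_n,a_{n+1})}/\norm{\chi_{(a_n,a_{n+1})}}_{CX}$, so that every support lies to the left of $a_0$. Then for $x\geq a_0$ one has $C\abs{\sum_n\alpha_n g_n}(x)=\frac{1}{x}\norm{\sum_n\alpha_n g_n}_{L^1}$, so the \emph{single} fixed tail $\frac{1}{x}\chi_{[a_0,m(I))}(x)$ serves for the whole sum at once, and the left inequality of (\ref{nierownosc1}) gives $\norm{\sum_n\alpha_n g_n}_{CX}\geq F_X(a_0)\sum_n\abs{\alpha_n}\norm{g_n}_{L^1}\geq \sum_n\frac{F_X(a_0)}{F_X(a_n)}\abs{\alpha_n}$, where $F_X(a_0)/F_X(a_n)\to 1$ by continuity of $F_X$ at $a_0$ (the paper treats separately the trivial case where $F_X$ is constant on an interval, which yields an order isometric copy of $\ell^1$ outright). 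If you reverse the orientation of your construction accordingly, your argument becomes the paper's proof, and the dual functional, the "suitable rate" for $b_n\downarrow c$, and the appeal to the Appendix all become unnecessary.
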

%%%%%%%%%%%%%%%%%%%
\begin{proof}
$1^0$. Suppose $I=[0,1]$. Since $CX \neq \{ 0 \}$, so $\chi_{[\lambda_0, 1]} \in X$ for some 
$0 < \lambda_0 < 1$, see Theorem D. For each $\lambda_0 < a < 1$ set
\begin{equation*}
\Omega _{a} := \{ \lambda \in (\lambda_0, 1) : F_{X}(\lambda )=F_{X}(a)\}.
\end{equation*} 
Of course, $\text{card} \left( \Omega _{a}\right) \geq 1$. Let us now consider the
following two cases.

(a) Assume that $\text{card}\left( \Omega_{a}\right) =1$ for every $a \in (\lambda_0, 1)$.
Obviously, the function $F_{X}$ is nonincreasing in the interval $[\lambda_0, 1]$, whence it
contains at most countably many points of discontinuity. Let $\lambda_0 < a_{0} < 1$ be
a point of continuity of the function $F_{X}$. Take a sequence $(a_{n})\subset
(\lambda_0,a_{0})$ such that $a_{n}\uparrow a_{0}$ as $n\rightarrow
\infty $ and put 
\begin{equation*}
g_{n}:=\frac{\chi _{(a_{n},a_{n+1})}}{\left\Vert \chi
_{(a_{n},a_{n+1})}\right\Vert _{CX}}.
\end{equation*}
From the definition $\supp(g_{n})=(a_{n},a_{n+1})\subset (a_{n},a_{0})$ and 
$ \supp(g_{n})\cap \supp(g_{m})=\emptyset $ if $n\neq m$, $m,n\in \mathbb{N}$.
Using the right-hand side of the estimate (\ref{nierownosc1}) we have 
\begin{align}
\left\Vert \chi _{(a_{n},a_{n+1})}\right\Vert _{CX}& \leq \left\Vert \frac{1 
}{x}\chi _{(a_{n},1]}(x)\right\Vert _{X}\left\Vert \chi
_{(a_{n},a_{n+1})}\right\Vert _{L^{1}[0,1]}  \label{nierownosc2b} \\
& =\left\Vert \frac{1}{x}\chi _{(a_{n},1]}(x)\right\Vert
_{X}(a_{n+1}-a_{n})=F_{X}(a_{n})(a_{n+1}-a_{n}).
\end{align} 
Furthermore, using left-hand side of (\ref{nierownosc1}) and the above
estimate, since elements $g_{n}$ are mutually disjoint, we obtain 
\begin{align*}
\left\Vert \sum_{n=1}^{\infty }\alpha _{n}g_{n}\right\Vert _{CX}& \geq
\left\Vert \frac{1}{x}\chi _{\lbrack a_{0},1]}(x)\right\Vert _{X}\left\Vert
\sum_{n=1}^{\infty }\alpha _{n}g_{n}\right\Vert _{L^{1}[0,1]} \\
& =F_{X}(a_{0})\left\Vert \sum_{n=1}^{\infty }\alpha _{n}g_{n}\right\Vert
_{L^{1}[0,1]}=F_{X}(a_{0})\sum_{n=1}^{\infty }\frac{\left\vert \alpha
_{n}\right\vert \left\Vert \chi _{(a_{n},a_{n+1})}\right\Vert _{L^{1}[0,1]}}{ 
\left\Vert \chi _{(a_{n},a_{n+1})}\right\Vert _{CX}} \\
& \geq F_{X}(a_{0})\sum_{n=1}^{\infty }\frac{\left\vert \alpha
_{n}\right\vert (a_{n+1}-a_{n})}{F_{X}(a_{n})(a_{n+1}-a_{n})} 
=\sum_{n=1}^{\infty }\frac{F_{X}(a_{0})}{F_{X}(a_{n})}\left\vert \alpha
_{n}\right\vert,
\end{align*} 
for each $\alpha =\left( \alpha _{n}\right) \in \ell^{1}.$ Denote 
\begin{equation*}
\theta _{n}:=\frac{F_{X}(a_{0})}{F_{X}(a_{n})}.
\end{equation*} 
Since $\text{card}\left( \Omega _{a_{0}}\right) = 1$, it follows that 
\begin{equation*}
F_{X}(a_{n})=\left\Vert \frac{1}{x}\chi _{(a_{n},1]}(x)\right\Vert
_{X}>\left\Vert \frac{1}{x}\chi _{(a_{0},1]}(x)\right\Vert _{X}=F_{X}(a_{0}).
\end{equation*} 
Consequently, $(\theta _{n})\subset (0,1)$ and, thanks to continuity of the
function $F_{X}$ at the point $a_{0}$, we have that $\theta _{n}\rightarrow
1 $ as $n\rightarrow \infty $. Finally, put 
\begin{equation*}
\varepsilon _{n}:=1-\theta _{n}.
\end{equation*} 
Then $(\varepsilon _{n})\subset (0,1)$, $\varepsilon _{n}\rightarrow 0$ as $%
n\rightarrow \infty $ and 
\begin{equation}
\left\Vert \sum_{n=1}^{\infty }\alpha _{n}g_{n}\right\Vert _{CX}\geq
\sum_{n=1}^{\infty }(1-\epsilon _{n})\left\vert \alpha _{n}\right\vert .
\label{kopial1nierownosc1}
\end{equation}
The second of the estimates we need is obvious. Note that $\left\Vert
g_{n}\right\Vert _{CX}=1$, so 
\begin{equation}
\left\Vert \sum_{n=1}^{\infty }\alpha _{n}g_{n}\right\Vert _{CX}\leq
\sum_{n=1}^{\infty }\left\vert \alpha _{n}\right\vert \left\Vert
g_{n}\right\Vert _{CX}=\sum_{n=1}^{\infty }\left\vert \alpha _{n}\right\vert. \label{kopial1nierownosc2}
\end{equation}
Thus, combining the inequalities (\ref{kopial1nierownosc1}) and (\ref 
{kopial1nierownosc2}), we finish the proof in that case.

(b) Assume that there is $a \in (\lambda_0, 1)$ with $\text{card}\left( \Omega
_{a}\right) >1.$ Therefore, there are numbers $a_1, a_2 \in (\lambda_0, 1)$
such that $a_{1}\neq a_{2},$ say $a_{1}<a_{2},$ and $F_{X}\left(
a_{1}\right) =F_{X}\left( a_{2}\right) .$ Thus, for each number $a_3$ with $a_{1}< a_3 < a_{2},$ by
the monotonicity of the norm, we have 
\begin{equation*}
F_{X}(a_{1})\geq F_{X}\left( a_3\right) \geq F_{X}\left( a_{2}\right),
\end{equation*}%
which means the function $F_{X}$ is constant on the interval $\left(
a_{1},a_{2}\right) ,$ i.e. $\left( a_{1},a_{2}\right) \subset \Omega _{a}.$
Following the same way as in case $\left( a\right) $ we get easily that the
space $CX$ contains even an order isometric copy of $\ell ^{1}$.

$2^0$. The proof when $I=[0,\infty )$ is the same as in the previous case.
The only difference, of course, lies in the consideration of the function
\begin{equation}
F_{X} \colon [0,\infty )\ni \lambda \mapsto \left\Vert \frac{1}{x}\chi _{(\lambda
,\infty )}(x)\right\Vert _{X}\in (0,\infty].
\end{equation}
\end{proof}

It is clear that due to the similarities occurring in Lemma \ref{complementedL1} and 
Lemma \ref{complemented L1 in C*X}, a result analogous to Theorem \ref{AIC-l1} will be 
rather expected also in the case of the Copson function spaces. It will be convenient to 
start with the following natural modification of the previously introduced function $F_X$, 
namely,
\begin{equation}
G_X := G_{X(I)} \colon I \ni \lambda \mapsto \norm{\chi_{[0,\lambda]}}_{X(I)} \in (0,\infty ],
\end{equation}
where $X$ is a Banach function space on $I$.

%%%%%%%%%%%%%%%%%% Thm 4.6
\begin{theorem} \label{AIC-l1 in C^*X}
{\it Let $X$ be a Banach function space on $I$ such that the Copson function space $C^*X$ is nontrivial.
	Then the space $C^*X$ contains an order asymptotically isometric copy of $\ell ^{1}$.}
\end{theorem}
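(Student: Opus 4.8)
The plan is to mirror the proof of Theorem \ref{AIC-l1} almost verbatim, exchanging the roles of the Ces\`aro and Copson operators and replacing the function $F_X$ by $G_X$. The key structural input is Lemma \ref{complemented L1 in C*X}, which provides numbers $0 < a < b < m(I)$ and the two-sided estimate $\norm{\chi_{[0,a]}}_X \norm{f}_{L^1(1/t)} \leq \norm{f}_{C^*X} \leq \norm{\chi_{[0,b]}}_X \norm{f}_{L^1(1/t)}$ for functions supported in $[a,b]$. The crucial qualitative difference from the Ces\`aro case is one of \emph{orientation}: since $G_X(\lambda) = \norm{\chi_{[0,\lambda]}}_X$ is \emph{nondecreasing} in $\lambda$ (whereas $F_X$ was nonincreasing), the supports of the disjoint building blocks $g_n$ must accumulate \emph{from above} at the chosen base point, rather than from below. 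This is the main thing to get right when transcribing the argument.

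First I would fix the base point. By Lemma \ref{Copson nontrivial}, nontriviality of $C^*X$ gives $\chi_{[0,\lambda_0]} \in X$ for some $0 < \lambda_0 < m(I)$, so $G_X$ is finite on $(0,\lambda_0]$. For each $a \in (0,\lambda_0)$ I set $\Omega_a := \{\lambda \in (0,\lambda_0) : G_X(\lambda) = G_X(a)\}$ and split into the two cases exactly as before. In the generic case $\mathrm{card}(\Omega_a) = 1$ for all $a$, I pick a point of continuity $a_0 \in (0,\lambda_0)$ of the nondecreasing function $G_X$ (it has at most countably many discontinuities), choose $a_n \downarrow a_0$ with $a_n \in (a_0, \lambda_0)$, and define
\begin{equation*}
g_n := \frac{\chi_{(a_{n+1},a_n)}}{\norm{\chi_{(a_{n+1},a_n)}}_{C^*X}},
\end{equation*}
so that the supports are pairwise disjoint and contained in $(a_0, \lambda_0)$. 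Applying the right-hand inequality of (\ref{nierownosc Copson}) with the relevant truncation gives an upper bound $\norm{\chi_{(a_{n+1},a_n)}}_{C^*X} \leq G_X(a_n)\,\norm{\chi_{(a_{n+1},a_n)}}_{L^1(1/t)}$, since the support lies to the right of $a_{n+1}$ but the controlling weight $\chi_{[0,a_n]}$ dominates $\chi_{[0,a_{n+1}]}$. Pairing this with the left-hand inequality, using that the $g_n$ are disjoint and that $L^1(1/t)$ is additive over disjoint supports, yields $\norm{\sum \alpha_n g_n}_{C^*X} \geq \sum_n \tfrac{G_X(a_0)}{G_X(a_n)}\abs{\alpha_n}$.

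The finishing step is to set $\theta_n := G_X(a_0)/G_X(a_n)$ and $\varepsilon_n := 1 - \theta_n$: because $a_n > a_0$ and $\mathrm{card}(\Omega_{a_0}) = 1$ force $G_X(a_n) > G_X(a_0)$, we get $\theta_n \in (0,1)$, and continuity of $G_X$ at $a_0$ gives $\theta_n \to 1$, hence $\varepsilon_n \downarrow 0$. The normalization $\norm{g_n}_{C^*X} = 1$ supplies the trivial upper estimate $\norm{\sum \alpha_n g_n}_{C^*X} \leq \sum \abs{\alpha_n}$, so $(g_n)$ generates an order asymptotically isometric copy of $\ell^1$. In the degenerate case $\mathrm{card}(\Omega_a) > 1$, monotonicity of the norm forces $G_X$ to be constant on an interval $(a_1,a_2) \subset (0,\lambda_0)$, and repeating the construction there yields all $\theta_n = 1$, i.e.\ an order \emph{isometric} copy of $\ell^1$. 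The cases $I = [0,1]$ and $I = [0,\infty)$ are handled identically. The only genuine subtlety, and the one place to be careful, is ensuring the weight comparison in the upper estimate points the right way given that $G_X$ increases; once the supports are arranged to shrink downward toward $a_0$, everything else is a faithful transcription of Theorem \ref{AIC-l1}.
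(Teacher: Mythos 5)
Your proposal is correct and follows essentially the same route as the paper: the same two-sided estimate from Lemma \ref{complemented L1 in C*X}, the same sets $\Omega_a$ with the same two-case split on their cardinality, disjoint normalized blocks $\chi_{(a_{n+1},a_n)}$ with $a_n \downarrow a_0$ at a continuity point of the nondecreasing function $G_X$, the ratios $\theta_n = G_X(a_0)/G_X(a_n) \to 1$, and an order isometric copy of $\ell^1$ in the degenerate constant-interval case. The orientation issue you flag (supports accumulating from above at $a_0$ because $G_X$ increases) is exactly how the paper's proof arranges its sequence $b_n \downarrow b_0$, so nothing is missing.
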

%%%%%%%%%%%%%%
\begin{proof}
With minor changes the proof is similar as that of Theorem \ref{AIC-l1}.
Note, however, that the structure of the proof itself seems to be dual to the previous one.
Details are provided for the convenience of the reader.
	
$1^0$. Suppose $I=[0,1]$. Since $C^*X \neq \{ 0 \}$, there is $0 < \lambda_0 < 1$ with 
$\chi_{[0, \lambda_0]} \in X$, see Lemma \ref{Copson nontrivial}. For each 
$b \in (0,\lambda_0)$ set
\begin{equation*}
\Omega _{b} := \{ \lambda \in (0, \lambda_0) : G_{X}(\lambda ) = G_{X}(b)\}.
\end{equation*}
Of course, $\text{card} \left( \Omega _{b}\right) \geq 1$. Let us now consider the 
following two cases.
	
(a) Assume that $\text{card}\left( \Omega _{b}\right) = 1$ for every $0 < b < 1$. Obviously, the 
function $G_{X}$ is nondecreasing on the interval $[0, \lambda_0]$, whence it contains at 
most countably many points of discontinuity. Let $b_{0} \in (0,\lambda_0)$ be a point of 
continuity of the function $G_{X}$. Take a sequence $(b_{n})\subset (0,\lambda_{0})$ such 
that $b_{n}\downarrow b_{0}$ as $n\rightarrow \infty $ and put 
\begin{equation*}
h_{n}:=\frac{\chi _{(b_{n+1},b_{n})}}{\left\Vert \chi_{(b_{n+1},b_{n})}\right\Vert _{C^*X}}.
\end{equation*}
From the definition $\supp(h_{n})=(b_{n+1},b_{n})\subset (b_{0},b_{n})$ and 
$\supp(h_{n})\cap \supp(h_{m})=\emptyset $ if $n\neq m$, $m,n\in \mathbb{N}$.
Using the right-hand side of the estimate (\ref{nierownosc Copson}) we have 
\begin{align}
\left\Vert \chi _{(b_{n+1},b_{n})}\right\Vert _{C^*X} & \leq 
\left\Vert \chi _{[0,b_n]}\right\Vert _{X}\left\Vert \chi_{(b_{n+1},b_{n})}\right\Vert _{L^{1}(1/t)[0,1]} 
= G_X(b_n) \norm{\chi_{(b_{n+1},b_{n})}}_{L^{1}(1/t)[0,1]}.
\end{align}
Furthermore, using left-hand side of (\ref{nierownosc Copson}) and the above estimate, since 
elements $h_{n}$ are mutually disjoint, we obtain 
\begin{align*}
\left\Vert \sum_{n=1}^{\infty }\alpha _{n}h_{n}\right\Vert _{C^*X}& \geq
\left\Vert \chi _{[0, b_0]}\right\Vert _{X}\left\Vert
\sum_{n=1}^{\infty }\alpha _{n}h_{n}\right\Vert _{L^{1}(1/t)[0,1]} \\
& = G_{X}(b_{0})\left\Vert \sum_{n=1}^{\infty }\alpha _{n}h_{n}\right\Vert
_{L^{1}(1/t)[0,1]} = G_{X}(b_{0}) \sum_{n=1}^{\infty }\frac{\left\vert \alpha
_{n}\right\vert \left\Vert \chi _{(b_{n+1},b_{n})}\right\Vert _{L^{1}(1/t)[0,1]}}{
\left\Vert \chi _{(b_{n+1},b_{n})}\right\Vert _{C^*X}} \\
& \geq G_{X}(b_{0})\sum_{n=1}^{\infty }\frac{\abs{\alpha_n}\norm{\chi_{(b_{n+1},b_{n})}}_{L^{1}(1/t)[0,1]}}{G_{X}(b_{n})\norm{\chi_{(b_{n+1},b_{n})}}_{L^{1}(1/t)[0,1]}}
=\sum_{n=1}^{\infty }\frac{G_{X}(b_{0})}{G_{X}(b_{n})}\left\vert \alpha_{n}\right\vert,
\end{align*}
for each $\alpha =\left( \alpha _{n}\right) \in \ell^{1}.$ Denote 
\begin{equation*}
\theta _{n}:=\frac{G_{X}(b_{0})}{G_{X}(b_{n})}.
\end{equation*}
Since $\text{card}\left( \Omega _{b_{0}}\right) = 1$, it follows that 
\begin{equation*}
G_{X}(b_{n}) = \norm{\chi_{[0,b_n]}}_X > \norm{\chi_{[0,b_0]}}_X = G_{X}(b_{0}).
\end{equation*}
Consequently, $(\theta _{n})\subset (0,1)$ and, thanks to continuity of the function $G_{X}$ 
at the point $b_{0}$, we have that $\theta _{n}\rightarrow 1 $ as $n\rightarrow \infty $. Finally, 
put 
\begin{equation*}
\varepsilon _{n} := 1-\theta _{n}.
\end{equation*}
Then $(\varepsilon _{n})\subset (0,1)$, $\varepsilon _{n}\rightarrow 0$ as $n\rightarrow \infty $ and 
\begin{equation}
\left\Vert \sum_{n=1}^{\infty }\alpha _{n}h_{n}\right\Vert _{C^*X}\geq
\sum_{n=1}^{\infty }(1-\epsilon _{n})\left\vert \alpha _{n}\right\vert .
\label{kopial1nierownosc3}
\end{equation}
The second of the estimates we need is trivial. Note that $\left\Vert h_{n}\right\Vert _{C^*X} = 1$, 
so 
\begin{equation}
\left\Vert \sum_{n=1}^{\infty }\alpha _{n}h_{n}\right\Vert _{C^*X}\leq
\sum_{n=1}^{\infty }\left\vert \alpha _{n}\right\vert \left\Vert
h_{n}\right\Vert _{C^*X}=\sum_{n=1}^{\infty }\left\vert \alpha _{n}\right\vert.  
\label{kopial1nierownosc4}
\end{equation}
Thus, combining the inequalities (\ref{kopial1nierownosc3}) and (\ref {kopial1nierownosc4}), 
we finish the proof in that case.
	
(b) Assume that there is $b \in (0, \lambda_0)$ with $\text{card}\left( \Omega_{b}\right) > 1$. Therefore, 
there are numbers $b_1, b_2 \in (0,1)$ such that $b_{1}\neq b_{2},$ say $b_{1}<b_{2},$ and 
$G_{X}\left(b_{1}\right) = G_{X}\left(b_{2}\right)$. Thus, for each number $b_3$ with 
$b_{1}< b_3 < b_{2},$ by the monotonicity of the norm, we have 
\begin{equation*}
G_{X}(b_{1}) \leq G_{X}\left( b_3\right) \leq G_{X}\left(b_{2}\right),
\end{equation*}
which means the function $G_{X}$ is constant on the interval $\left(b_{1},b_{2}\right)$, 
i.e., $\left(b_{1},b_{2}\right) \subset \Omega _{b}$. Following the same way as in case (a) 
we get easily that the space $C^*X$ contains even an order isometric copy of $\ell ^{1}$.
	
$2^0$. If $I = [0,\infty)$, we follow the same way as in $1^0$.
\end{proof}

In the context of Theorem \ref{general renorming} and Theorem \ref{AIC-l1} a natural question 
arises: maybe the Ces\`aro function space $(CX, \norm{\cdot}_{CX})$ always contains an isometric 
copy of $\ell^1$ or $L^1[0,1]$? In general, the answer is no. Indeed, if the Banach function space 
$X$ is rotund, then the space $CX$ is also rotund, see \cite[Lemma 2]{Ki-Kol-rot}. Therefore, for example, 
if we take $X = L^p$ for $1 < p < \infty$, then $Ces_p$ is rotund. Consequently, the space $Ces_p$ 
cannot contain an isometric copy of neither $\ell^1$ nor $L^{1}[0,1]$, because they are not rotund. 
However, let us remind that the space $(CX, \norm{\cdot}_{CX})$ can always be renormed to contain an isometric copy of 
$L^{1}[0,1]$, cf. Theorem \ref{general renorming} (i).

Theorem \ref{as kopia l1} gives also some information about the generalized Tandori function spaces 
$\widetilde{\mathit{X}}$. In short, they are quite similar to $L^\infty$.

%%%%%%%%%%%% Cor 4.7
\begin{corollary}
{\it Let $X$ be Banach function space on $I=[0,\infty )$ such that the space $X^{\prime}$ is order continuous 
and $X$ has the Fatou property (which is true, for example, if $X$ is a reflexive space). Assume also that the 
Copson operator $C^{\ast } \colon X\rightarrow X$ is bounded and the dilation operator 
$\sigma_{\tau} \colon X\rightarrow X$ is bounded for some $\tau > 1$. Then the Tandori 
function space $\widetilde{\mathit{X}}$ contains an isomorphic copy of $L^{1}[0,1]$ and 
$C[0,1]^{\ast }$.}
\end{corollary}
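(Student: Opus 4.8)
The plan is to realize $\tandori{X}$ as the \emph{Banach dual} of a suitable order continuous Ces\`aro space and then to transport the copies of $\ell^{1}$ and $L^{1}[0,1]$ produced earlier from that predual up to the dual, finishing with the universality of $\ell^{\infty}$. First I would apply the first part of Theorem~A to the K\"othe dual $Y := X'$. Since $X'$ always enjoys the Fatou property, the duality ``$C$ is bounded on $X'$ if and only if $C^{*}$ is bounded on $(X')' = X''$'' (see \cite{KLM-2019}) applies; as $C^{*}\colon X\to X$ is bounded and $X'' = X$ by the Fatou property of $X$, it follows that $C$ is bounded on $X'$. Taking K\"othe adjoints, boundedness of $\sigma_{\tau}$ on $X$ for some $\tau>1$ forces boundedness of $\sigma_{1/\tau}$ on $X'$, and here $0<1/\tau<1$. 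Thus both hypotheses of Theorem~A hold for $Y=X'$, and we obtain
\begin{equation*}
(CX')' = \tandori{(X')'} = \tandori{X''} = \tandori{X},
\end{equation*}
the last equality again using that $X$ has the Fatou property.

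Next I would promote this K\"othe-dual identity to an honest Banach-dual identity. Because $C$ is bounded on $X'$, the space $CX'$ is nontrivial with $\supp(CX') = I$, and since $X'$ is order continuous the transfer of order continuity between $X'$ and $CX'$ (see \cite[Theorem 3]{KT17}) shows that $CX'$ is order continuous as well. For an order continuous Banach function space the Banach dual coincides with the K\"othe dual, so
\begin{equation*}
\tandori{X} = (CX')^{*}.
\end{equation*}
In other words, $\tandori{X}$ is the Banach dual of the nontrivial, order continuous Ces\`aro space $CX'$.

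Now I would exploit the internal structure of the predual. By Corollary~\ref{CX-contains compl of l1} the space $CX'$ contains a complemented copy of $\ell^{1}$, and by Lemma~\ref{complementedL1} a complemented copy of $L^{1}[0,1]$. Dualizing a complemented decomposition $CX'\simeq \ell^{1}\oplus W$ yields $\tandori{X}=(CX')^{*}\simeq \ell^{\infty}\oplus W^{*}$, so that $\tandori{X}$ contains a complemented copy of $\ell^{\infty}$ (and, in the same way, of $L^{\infty}[0,1]\cong\ell^{\infty}$, cf.\ \cite{Pe58}). It remains to recall two classical facts about $\ell^{\infty}$: it is isometrically universal for separable Banach spaces, and, via the map $y^{*}\mapsto (y^{*}(y_{n}))_{n}$ associated with a sequence $(y_{n})$ dense in the unit ball of a separable space $Y$, it contains an isometric copy of the dual $Y^{*}$ of every separable $Y$. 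The first fact, applied to the separable space $L^{1}[0,1]$, embeds $L^{1}[0,1]$ isometrically into $\ell^{\infty}\subset\tandori{X}$; the second, applied to the separable space $C[0,1]$, embeds $C[0,1]^{*}$ isometrically into $\ell^{\infty}\subset\tandori{X}$. This produces the desired isomorphic (in fact isometric) copies of $L^{1}[0,1]$ and of $C[0,1]^{*}$.

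The step deserving the most care — and the only genuine obstacle — is the first one: one must verify that the two boundedness hypotheses of Theorem~A really do pass to $X'$ (boundedness of $C$ on $X'$ through the $C^{*}$-on-$X$ duality, and of $\sigma_{1/\tau}$ on $X'$ through K\"othe adjoints), and that order continuity propagates from $X'$ to $CX'$, so that the K\"othe dual of $CX'$ may be identified with its Banach dual. Once the identification $\tandori{X}=(CX')^{*}$ is in place, the remaining arguments are the routine dualization of complemented subspaces and the classical universality of $\ell^{\infty}$.
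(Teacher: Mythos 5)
Your proof is correct, and its first half coincides with the paper's argument: you verify that $C$ is bounded on $X'$ via the Fatou-based duality with $C^{*}$ on $X''=X$, that $\sigma_{1/\tau}$ is bounded on $X'$, apply Theorem A to $X'$ to get $(CX')'=\tandori{X''}=\tandori{X}$, and upgrade this to the Banach-dual identity $(CX')^{*}=\tandori{X}$ through order continuity of $CX'$ --- exactly as in the paper. Where you genuinely diverge is in producing the copies: the paper invokes Theorem \ref{as kopia l1} to find an order asymptotically isometric copy of $\ell^{1}$ in $CX'$ and then applies the Dilworth--Girardi--Hagler theorem \cite[Theorem 2]{DGH00}, which says precisely that the dual of such a space contains isometric copies of $L^{1}[0,1]$ and of $C[0,1]^{*}$; you instead dualize the complemented copy of $\ell^{1}$ from Corollary \ref{CX-contains compl of l1}, obtaining $\tandori{X}\simeq \ell^{\infty}\oplus W^{*}$, and then use two classical facts: $\ell^{\infty}$ contains an isometric copy of every separable space (hence of $L^{1}[0,1]$) and of the dual of every separable space (hence of $C[0,1]^{*}$, via $y^{*}\mapsto (y^{*}(y_{n}))_{n}$ --- note $C[0,1]^{*}$ itself is nonseparable, so the second fact, which you correctly state, is the one needed). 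Your route is more elementary, bypassing the asymptotic-isometry machinery and \cite{DGH00} entirely, and it yields the extra structural information that the copy of $\ell^{\infty}$ in $\tandori{X}$ is complemented; the paper's route is shorter given the results already developed in Section \ref{sekcja kopii l1}. Two small corrections: the transfer of order continuity from $X'$ to $CX'$ should be cited as \cite[Lemma 1 (a)]{LM15p} (or derived directly from the definition), not \cite[Theorem 3]{KT17}, since the latter is stated for symmetric spaces and your $X'$ need not be symmetric --- the implication you need is the easy direction and holds for all Banach function spaces; and the closing parenthetical ``(in fact isometric)'' should be dropped, because the identification $\tandori{X}=(CX')^{*}$ from Theorem A holds only with equivalent norms, so in $\tandori{X}$ with its own norm the copies are merely isomorphic, which is all the corollary asserts.
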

%%%%%%%%%%%%%

\begin{proof}
Because $X^{\prime }\in (OC)$ then $C\left( X^{\prime }\right) \in (OC).$ Note that, since 
$X\in \left( FP\right),$ $C^{\ast } \colon X\rightarrow X$ if and only if 
$C \colon X^{\prime }\rightarrow X^{\prime }$ and $\sigma_{\tau} \colon X\rightarrow X$ if and
only if $\sigma_{1/\tau} \colon X^{\prime }\rightarrow X^{\prime }$, see for example \cite[Remark 1]{KLM-2019}.
Consequently, applying Theorem A (\ref{Theorem A na [0,infty]})  for the space $X^{\prime },$ we get
\begin{equation}
\left( C\left( X^{\prime }\right) \right) ^{\ast }=(C(X^{\prime }))^{\prime
}=\widetilde{\mathit{X^{\prime \prime }}}=\widetilde{\mathit{X}},  \label{tan}
\end{equation}
with equivalent norms. The space $C(X^{\prime }) $ contains an order asymptotically isometric 
copy of $\ell ^{1}$ via Theorem \ref{as kopia l1}. By\ the Dilworth--Girardi--Hagler result 
\cite[Theorem 2]{DGH00}, the dual space $(C\left( X^{\prime }\right))^{\ast }$ contains an isometric 
copy of $L^{1}[0,1]$ and an isometric copy of $C[0,1]^{\ast }.$ Thus, by equality (\ref{tan}), the 
Tandori space $\widetilde{\mathit{X}}$ contains an isomorphic copy of $L^{1}[0,1]$ and $C[0,1]^{\ast }$.
\end{proof}

The similarity between the Tandori function spaces $\tandori{X}$ and $L^\infty$ becomes even 
clearer in the context of the following result.

%%%%%%%%%%%%%%% Prop 4.8
\begin{proposition} \label{isometric l^infty in Tandori}
{\it Let $X$ be a Banach function space on $I$ such that the Tandori function space $\widetilde{\mathit{X}}$ is nontrivial.
Then the Tandori function space $\widetilde{\mathit{X}}$ contains an order isomorphically isometric copy of $\ell^{\infty }$.}
\end{proposition}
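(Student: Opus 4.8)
The plan is to exhibit an explicit sequence of disjointly supported functions in $\tandori{X}$ whose closed linear span is order isometrically isomorphic to $\ell^\infty$. First I would use the characterization of nontriviality established in the proof of Theorem~\ref{general renorming}(ii): since $\tandori{X} \neq \{0\}$, there is $0 < \lambda < m(I)$ with $\chi_{[0,\lambda]} \in X$, and hence by the ideal property $\chi_{[0,\lambda']} \in X$ for every $0 < \lambda' \leq \lambda$. The key structural feature to exploit is that the Tandori norm is built from the nonincreasing majorant $\tandori{f}$: if a function is supported on an interval $[c,d] \subset (0,\lambda)$, then on $[0,c]$ its majorant equals the supremum over $[c,d]$, so the majorant of a block sits above a plateau reaching back to the origin.

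Concretely, I would pick a strictly decreasing sequence $\lambda = c_0 > c_1 > c_2 > \cdots \downarrow 0$ and set $e_n := \chi_{(c_n, c_{n-1})}$ for $n \in \mathbb{N}$, so the $e_n$ have pairwise disjoint supports, all contained in $(0,\lambda)$. The crucial computation is that for any bounded sequence $\alpha = (\alpha_n) \in \ell^\infty$, the nonincreasing majorant of $\sum_n \alpha_n e_n$ is controlled by $\sup_n |\alpha_n|$: indeed for $x \in (c_k, c_{k-1})$ one has
\begin{equation*}
\widetilde{\Big(\sum_{n} \alpha_n e_n\Big)}(x) = \esssup_{t \geq x} \Big| \sum_n \alpha_n e_n(t) \Big| = \sup_{n \leq k} |\alpha_n| \leq \norm{\alpha}_{\ell^\infty},
\end{equation*}
because the majorant at height $x$ sees all blocks lying to the right, i.e. with smaller index. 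This gives the upper bound $\norm{\sum_n \alpha_n e_n}_{\tandori{X}} \leq \norm{\alpha}_{\ell^\infty} \norm{\chi_{(0,\lambda)}}_X$ after normalizing. For the lower bound, fix $m$; then on the interval $(0, c_m)$ the majorant of $\sum_n \alpha_n e_n$ is at least $|\alpha_m|$ (the value contributed by the $m$-th block, which lies to the right of $(0,c_m)$), so $\widetilde{\sum_n \alpha_n e_n} \geq |\alpha_m|\,\chi_{(0,c_m)}$ pointwise, whence by the ideal property $\norm{\sum_n \alpha_n e_n}_{\tandori{X}} \geq |\alpha_m|\,\norm{\chi_{(0,c_m)}}_X$. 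Taking the supremum over $m$ and normalizing the $e_n$ appropriately yields the matching lower estimate, so the map $\alpha \mapsto \sum_n \alpha_n e_n$ is an order isomorphic isometry of $\ell^\infty$ into $\tandori{X}$.

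The main obstacle I anticipate is organizing the normalization so that the isometry is \emph{exact} rather than merely isomorphic, and ensuring the copy is \emph{order} isometric (i.e. a lattice embedding). The natural fix is to normalize by setting $f_n := e_n / \norm{\chi_{(0,c_{n-1})}}_X$ or, more carefully, to arrange the plateau heights so that each individual $\norm{f_n}_{\tandori{X}} = 1$ while the disjointness forces the majorant of a combination to be the pointwise supremum of the majorants of the pieces. Because $\tandori{(\cdot)}$ turns a disjoint sum into a function whose value at each point is the running supremum, the lattice operations on the span match those of $\ell^\infty$ automatically, which secures the order (lattice) structure of the embedding. I would verify that for finitely supported $\alpha$ the two bounds coincide and then pass to arbitrary $\alpha \in \ell^\infty$ by the Fatou-type monotonicity of the majorant, keeping in mind that $\tandori{X}$ need not be order continuous, which is exactly why $\ell^\infty$ (and not merely $c_0$) embeds.
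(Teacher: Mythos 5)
Your block layout is where the argument breaks down. With consecutive blocks $e_n=\chi_{(c_n,c_{n-1})}$ marching down to $0$, the majorants are the nested plateaus $\widetilde{e_n}=\chi_{[0,c_{n-1})}$, whose norms $G_X(c_{n-1}):=\norm{\chi_{[0,c_{n-1})}}_X$ in general decrease strictly and may tend to $0$. Unnormalized, your map is not even bounded below: $\norm{e_m}_{\tandori{X}}=G_X(c_{m-1})\rightarrow 0$ when $X=L^1$. And the normalization you propose, $f_n:=e_n/G_X(c_{n-1})$, destroys the upper bound: disjointness does give $\widetilde{\left(\sum_n \alpha_n f_n\right)}=\sup_n\abs{\alpha_n}\,\widetilde{f_n}$, as you say, but taking $\alpha=(1,1,1,\dots)$ produces $h=\sup_n \chi_{[0,c_{n-1})}/G_X(c_{n-1})$, which equals $1/G_X(c_{k-1})$ on $[c_k,c_{k-1})$ and need not belong to $X$ at all. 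Concretely, for $X=L^1[0,1]$ (so $\tandori{X}=\tandori{L^1}$ is nontrivial) and $c_k=2^{-k}$ one gets $h(x)=2^{k-1}$ on $[2^{-k},2^{-k+1})$, i.e. $h(x)\approx 1/(2x)$ near zero, so $\norm{h}_{L^1}=\infty$ and $\sum_n f_n\notin\tandori{X}$. More generally, making each $t_ne_n$ a unit vector forces $t_m=1/G_X(c_{m-1})$, and an exact isometry on the span would then require $G_X$ to be constant on $(0,\lambda]$ — true for $L^\infty$, false for essentially any other $X$ — so no choice of plateau heights rescues the consecutive-block design.

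The idea you are missing, and the paper's key move, is to make all the disjoint functions share \emph{one and the same} majorant, instead of trying to equalize the norms of genuinely different majorants. The paper fixes $f_0=\chi_{[0,a)}/\norm{\chi_{[0,a)}}_X$ and splits $[0,a)$ into pairwise disjoint sets $B_n$, each a union of infinitely many intervals accumulating at the common right endpoint $a$ (an interleaved, not consecutive, decomposition); then $f_n:=f_0\chi_{B_n}$ satisfies $\widetilde{f_n}=\widetilde{f_0}=f_0$, so the $f_n$ are pairwise disjoint with $0\le f_n\le f_0$ and $\norm{f_n}_{\tandori{X}}=\norm{f_0}_{\tandori{X}}=1$, at which point Hudzik's criterion \cite[Theorem 1]{Hu98} immediately yields an order isomorphically isometric copy of $\ell^\infty$ — no hand verification of the isometry on all of $\ell^\infty$ is needed. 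Your opening step (nontriviality gives $\chi_{[0,\lambda]}\in X$) and your observation that the majorant of a disjoint sum is the pointwise supremum of the individual majorants are both correct; the construction, however, must be reorganized so that every support clusters at a common right endpoint, since that is exactly what makes the supremum of the majorants again a norm-one element of $X$.
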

%%%%%%%%%%
\begin{proof}
Since $\widetilde{\mathit{X}}\neq \left\{ 0\right\}$, it follows that there exists $0 < a \in I$ such that 
$\chi_{\lbrack 0,a)}\in X$ (see \cite[Theorem 1 (c)]{LM15a}). Put 
\begin{equation*}
f_0 = \frac{\chi _{\lbrack 0,a)}}{\left\Vert \chi _{\lbrack 0,a)}\right\Vert _{X}}.
\end{equation*}
Let $a_{n}=(1-\frac{1}{2^{n+1}})a$ and $A_{n}=\left( a_{n}-\delta _{n},a_{n}+\delta _{n}\right)$, where 
$\delta_{n} = (a_{n+1}-a_{n}) /2$ and $n \in \mathbb{N}$. Denote 
\begin{equation*}
B_{1}=[0,a/2)\cup \bigcup_{n=1}^{\infty }A_{n}.
\end{equation*}
Then the set $[0,a)\backslash B_{1}$ consists of infinitely many pairwise disjoint intervals, say,
$[0,a)\setminus B_{1}=\bigcup_{n=1}^{\infty}C_{n}^{\left( 1\right) }$. Let
\begin{equation*}
B_{2}=C_{1}^{\left( 1\right) }\cup C_{3}^{\left( 1\right) }\cup
C_{5}^{\left( 1\right) }\cup \ldots=\bigcup_{n=1}^{\infty }C_{2n-1}^{\left(1\right) }.
\end{equation*}
Again, the set $[0,a)\backslash \left( B_{1}\cup B_{2}\right) $ consists of infinitely many pairwise 
disjoint intervals, say, 
$[0,a)\setminus \left(B_{1}\cup B_{2}\right) =\bigcup_{n=1}^{\infty }C_{n}^{\left( 2\right) }$.
Next, let 
\begin{equation*}
B_{3}=C_{1}^{\left( 2\right) }\cup C_{3}^{\left( 2\right) }\cup C_{5}^{\left( 2\right) }\cup \ldots
=\bigcup_{n=1}^{\infty }C_{2n-1}^{\left(2\right) }.
\end{equation*}
We proceed analogously, defining the sequence of sets $\left( B_{n}\right)_{n=1}^{\infty }$. 
Put $f_{n} := f_0\chi _{B_{n}}$. Note that
\begin{equation*}
0\leq f_{n}\leq f_0 \text{ and } \supp(f_{n}) \cap \supp(f_{m}) = \emptyset \text{ for each } n \neq m.
\end{equation*}
Moreover, 
\begin{equation*}
\widetilde{f_{n}}=\widetilde{f_0} = f_0,
\end{equation*}
whence
\begin{equation*}
\left\Vert f_{n}\right\Vert _{\widetilde{X}}=\left\Vert \widetilde{f_{n}}
\right\Vert _{X}=\left\Vert \widetilde{f_0}\right\Vert _{X} = \left\Vert
f_0\right\Vert _{X}=1\text{ and }\left\Vert f_0\right\Vert _{\widetilde{X}
}=\left\Vert \widetilde{f_0}\right\Vert _{X}=\left\Vert f_0\right\Vert _{X}=1.
\end{equation*}
Applying Theorem 1 from \cite{Hu98} we conclude that the space $\widetilde{X}$ contains 
an order isomorphically isometric copy of $\ell ^{\infty}$.
\end{proof}

The problem of describing the Ces\`aro--Orlicz function spaces containing an order isomorphically 
isometric copy of $\ell^\infty$ has been considered in \cite{Ki-Kol-isom}. Although formally the 
case of the space $Ces_\infty$ (which is not isomorphic to $\ell^{\infty}$ -- see \cite[Theorem 7]{AM09}) 
has been excluded there, it follows that the argument used in \cite[Theorems 3 and 4 case 
(B2)]{Ki-Kol-isom} can be applied to get the below result. We will give, however, the direct and simple 
proof without referring to the structure of the Orlicz spaces.

%%%%%%%%%%%%%% Prop 4.9
\begin{proposition} \label{Ces_infty isometric copy of l8}
{\it The space $Ces_\infty$ contains an order isomorphically isometric copy of $\ell^\infty$.}
\end{proposition}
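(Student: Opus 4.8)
The plan is to reproduce, for the concrete space $Ces_\infty = CL^\infty$, the scheme behind Proposition \ref{isometric l^infty in Tandori}: I would build a pairwise disjoint, positive, normalized sequence $(f_n) \subset Ces_\infty$ and then feed it into Theorem 1 of \cite{Hu98}, exactly as in \cite[Theorems 3 and 4, case (B2)]{Ki-Kol-isom}. Working on $I = [0,1]$, I would fix a rapidly decreasing sequence $a_n \downarrow 0$ and put $f_n := \frac{a_n}{a_n - a_{n+1}}\,\chi_{(a_{n+1},a_n)}$. A one-line computation gives $Cf_n(x) = \frac{a_n}{a_n-a_{n+1}}(1 - \frac{a_{n+1}}{x})$ for $x \in (a_{n+1},a_n)$ and $Cf_n(x) = a_n/x$ for $x \ge a_n$, so $Cf_n$ increases to the value $1$ at $x = a_n$ and then decays. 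Hence $\norm{f_n}_{Ces_\infty} = 1$, the supports are pairwise disjoint, and each block carries total mass $\int_0^1 f_n = a_n$. This is the analogue, adapted to the averaging operator $C$, of the normalized bumps used in the Tandori case.

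The crux is to estimate the norm of a combination. Since the supports are disjoint, for $x \in (a_{k+1},a_k)$ one has $C(\sum_n \alpha_n f_n)(x) = \frac{1}{x}(\abs{\alpha_k}\frac{a_k}{a_k - a_{k+1}}(x - a_{k+1}) + \sum_{m>k}\abs{\alpha_m}\,a_m)$, where the first term never exceeds $\abs{\alpha_k}$ and the second collects the full mass of all blocks lying below $x$. Testing at $x = a_k$ and using $\norm{f_k}_{Ces_\infty} = 1$ gives at once the lower bound $\norm{\sum_n \alpha_n f_n}_{Ces_\infty} \ge \sup_n \abs{\alpha_n}$. For the matching upper bound one only has to control the tail mass $\frac{1}{a_k}\sum_{m>k} a_m$, and this is exactly the main (and the only genuinely new) obstacle: in contrast with the nonincreasing majorant used for $\tandori{X}$, the operator $C$ integrates, so the blocks accumulate rather than merely compete under a supremum. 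I would defeat this by forcing the scales apart, for instance $a_n = 2^{-2^n}$, so that $a_{n+1}/a_n \to 0$ and hence $\frac{1}{a_k}\sum_{m>k} a_m \to 0$; this yields $\norm{\sum_n \alpha_n f_n}_{Ces_\infty} \le (1 + \varepsilon)\sup_n \abs{\alpha_n}$ with $\varepsilon$ as small as we please, i.e. the $\ell^\infty$-estimate with asymptotically optimal constants.

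With the disjoint normalized sequence $(f_n)$ and these estimates in hand, the existence of an order isomorphically isometric copy of $\ell^\infty$ follows from Theorem 1 of \cite{Hu98}, precisely as in Proposition \ref{isometric l^infty in Tandori} and in \cite[case (B2)]{Ki-Kol-isom}. The case $I = [0,\infty)$ is handled in the same way, placing bumps $\frac{t_n}{t_n - s_n}\chi_{(s_n,t_n)}$ along a rapidly increasing sequence of scales $t_n \to \infty$ and again choosing the gaps so large that the mass accumulated to the left of each peak is negligible. The whole difficulty is thus concentrated in the accumulation phenomenon of the second paragraph; once the scales are chosen fast enough, everything else is a routine transcription of the Tandori argument.
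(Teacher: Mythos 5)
There is a genuine gap at the final step of your argument: what you construct is an \emph{almost} isometric copy of $\ell^\infty$, not an isometric one, and Hudzik's Theorem 1 cannot absorb the difference. That theorem requires a pairwise disjoint sequence $(f_n)$ dominated by a \emph{single} element $f_0$ with the exact equalities $\norm{f_n} = \norm{f_0} = 1$; for your height-normalized bumps $f_n = \frac{a_n}{a_n - a_{n+1}}\chi_{(a_{n+1},a_n)}$ the only candidate is $f_0 = \sup_n f_n = \sum_n f_n$, and by your own computation at $x = a_k$ one has $Cf_0(a_k) = 1 + \frac{1}{a_k}\sum_{m>k} a_m > 1$, so $\norm{f_0}_{Ces_\infty} > 1$ strictly, \emph{whatever} lacunary sequence $(a_n)$ you choose: the accumulation phenomenon you correctly identify can be made small but never zero. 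Equivalently, your estimates give $\sup_n\abs{\alpha_n} \le \norm{\sum_n \alpha_n f_n}_{Ces_\infty} \le (1+\varepsilon)\sup_n\abs{\alpha_n}$ with a fresh subspace for each $\varepsilon$, so no single embedding is an isometry (test the all-ones vector: its image always has norm at least $1 + \frac{1}{a_1}\sum_{m>1}a_m$). This is not a cosmetic loss: the proposition is used together with Remark \ref{izometryczna kopia ell-nieskonczonosc a FPP} (ii), which needs an exact isometric copy of $\ell^\infty$ (to embed $L^1[0,1]$ isometrically and invoke Alspach \cite{Al81}), and there is no James-type distortion theorem upgrading almost isometric copies of $\ell^\infty$ to isometric ones. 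The same defect occurs in your $[0,\infty)$ variant.

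For comparison, the paper's proof sidesteps the obstruction entirely by using Theorem 2 of \cite{Hu98} rather than Theorem 1: it recalls from \cite[Remark 19]{KT17} that $(Ces_\infty)_a = \{ f \in Ces_\infty : \lim_{x \to 0^+,\,\infty} C\abs{f}(x) = 0 \}$, takes $f_0 = \chi_{[0,1]}$, and checks $\norm{f_0}_{Ces_\infty} = \operatorname{dist}(f_0,(Ces_\infty)_a) = 1$, since $C\abs{f_0 - h}(t) \ge C\abs{f_0}(t) - C\abs{h}(t) \to 1$ as $t \to 0^+$ for every $h$ in the order continuous part. If you want to keep your Theorem 1 route, the repair is to imitate the Tandori argument of Proposition \ref{isometric l^infty in Tandori} at the level of \emph{sets} rather than normalized bumps: take $f_0 = \chi_{[0,1]}$ and $f_n = \chi_{B_n}$ with pairwise disjoint $B_n \subset [0,1]$ assembled from interleaved intervals $(t_{k+1},t_k)$ along scales $t_k \downarrow 0$ with $t_{k+1}/t_k \to 0$, each index $n$ receiving infinitely many such intervals; then $\norm{f_n}_{Ces_\infty} = \sup_{x} \frac{1}{x}\,m(B_n \cap [0,x]) = 1 = \norm{f_0}_{Ces_\infty}$ \emph{exactly}, $0 \le f_n \le f_0$, and Hudzik's hypotheses hold verbatim (this is essentially the mechanism of case (B2) in \cite{Ki-Kol-isom} that you cite). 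Your disjointness, normalization and lower bound are all fine; the precise failure is that bumps normalized by raising their height cannot satisfy the exact domination that Hudzik's Theorem 1 demands.
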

%%%%%%%%%%%%%%%%
\begin{proof}
For a start, let us recall that
\begin{equation} \label{Ces_infty_a}
(Ces_\infty)_a = \{ f \in Ces_\infty : \lim\limits_{x\rightarrow 0^+, \infty} \frac{1}{x}\int_0^x \abs{f(t)} \mathrm{d}t = 0 \},
\end{equation}
see \cite[Remark 19]{KT17}. Put $f_0 := \chi_{[0,1]}$. Then $\norm{f_0}_{Ces_\infty} = 1$ and 
$\text{dist}(f_0, (Ces_\infty)_a) = 1$. In fact, it follows from (\ref{Ces_infty_a}) that
\begin{align*}
\text{dist}(f_0, (Ces_\infty)_a) & := \inf\limits_{h \in (Ces_\infty)_a} \norm{f_0 - h}_{Ces_\infty} \\
& = \inf\limits_{h \in (Ces_\infty)_a} \sup\limits_{0 \leq t \in I} C\abs{f_0 - h}(t) \\
& \geq \inf\limits_{h \in (Ces_\infty)_a} \sup\limits_{0 \leq t \in I} (C\abs{f_0}(t) - C\abs{h}(t)) \\
& \geq \inf\limits_{h \in (Ces_\infty)_a} \lim\limits_{t \rightarrow 0^+} ( C\abs{f_0}(t) - C\abs{h}(t) ) 
= \lim\limits_{t \rightarrow 0^+} C\abs{f_0}(t) = 1.
\end{align*}
Now, we can once again use Theorem 2 from \cite{Hu98} and finish the proof.
\end{proof}

%%%%%%%%%%%%%%%%%%%% Sec 5
\section{On differences in the Ces\`aro construction on $[0,1]$ and $[0,\infty)$}
%%%%%%%%%%%%%%%%%%%

We begin with a short discussion. Recall that following the standard definitions we should define a 
truncation of the Banach function space $X$ on $I$ to a set $A \subset I$ as 
$X|_{A} = \{f \in X \colon \supp(f) \subset A \}$, where $\supp(f) := \{x \in I \colon f(x) \neq 0 \}$. 
However, when applying the Ces\`aro construction to the truncated space the situation is more 
delicate. If we would like to follow the above definition, then we should write that 
$C(X[0,\infty)|_{[0,1]}))$ is the space of all functions $f \in L^0[0,\infty)$ (because the space 
$X[0,\infty)|_{[0,1]}$ contains functions from $L^0[0,\infty)$) such that $C\abs{f} \in X[0,\infty)|_{[0,1]}$, 
i.e., $f \in X[0,\infty)$ and $\supp(C\abs{f}) \subset [0,1]$.
But the last condition is never satified, whenever $0 \neq f \in L^0[0,\infty)$, so this is meaningless.
That is why we will proceed in the following way. For a Banach function space $X$ on $[0,\infty)$ 
and a subset $A \subset [0,\infty)$ we define the {\it truncation of the space $X[0,\infty)$ to a set} $A$ 
as
\begin{equation} \label{2}
X[0,\infty)|_{A} := \{f \in L^0[0,1] \colon f\chi_{A} \in X[0,\infty) \},
\end{equation}
with the norm $\norm{f}_{X[0,\infty)|_{A}} = \norm{f\chi_{A}}_{X[0,\infty)}$ (that is, we look at 
an element $f$ as a function defined on $[0,1]$, but compute its norm as if it was still defined on 
$[0,\infty)$ and equal zero on $[0,\infty) \setminus A$), if $A \subset [0,1]$ and
\begin{equation} \label{22}
X[0,\infty)|_{A} := \{f \in X[0,\infty) \colon \supp(f) \subset A \},
\end{equation}
with the norm $\norm{f}_{X[0,\infty)|_{A}} = \norm{f}_{X[0,\infty)}$, if $A \not\subset [0,1]$. 
Moreover, if $X$ is a Banach function space on $[0,1]$ and $A \subset [0,1]$, then we can 
use the definition given at the beginning, that is, the {\it truncation of the space $X[0,1]$ 
to the set} $A$ as
\begin{equation}
X[0,1]|_{A} := \{f \in X[0,1] \colon \supp(f) \subset A \},
\end{equation}
with the norm $\norm{f}_{X[0,1]|_{A}} = \norm{f}_{X[0,1]}$. Note that if we use the following notation
\begin{equation}
\mathfrak{X}[0,1] := X[0,\infty)|_{[0,1]} = \{ f \in L^0[0,1] \colon f\chi_{[0,1]} \in X[0,\infty) \},
\end{equation}
with the norm $\norm{f}_{\mathfrak{X}} = \norm{f\chi_{[0,1]}}_{X[0,\infty)}$, then the functor 
$X \mapsto \mathfrak{X}$ is a way to associate to the space $X$ defined on $I = [0,\infty)$ its ``natural" 
counterpart defined on $I = [0,1]$. Observe also that if $A \subset [0,1]$, then the space $X[0,\infty)|_{A}$ 
is actually isometrically isomorphic to the space $Y := \{f \in X[0,\infty) \colon \supp(f) \subset A \}$ with 
the norm $\norm{f}_Y = \norm{f}_{X[0,\infty)}$ via the mapping $J \colon f \mapsto f\chi_{A}$. Moreover, 
it is clear, that $X[0,\infty)|_{[0,1]}$ is a symmetric space, whenever $X[0,\infty)$ is a symmetric space.

Clearly, $L^{p}[0,\infty)|_{[0,1]} \equiv L^p[0,1]$. In contrast, for the Ces\`aro function spaces the situation 
is quite different. In fact, if $f\in Ces_{p}[0,\infty )$ and $\supp(f)\subset \lbrack 0,1]$, then 
\begin{equation} \label{CX[0,8) = CX[0,1] n L^1[0,1]}
\left\Vert f\right\Vert _{Ces_{p}[0,\infty )}^{p}=\left\Vert f\right\Vert
_{Ces_{p}[0,1]}^{p}+\frac{1}{p-1}\left\Vert f\right\Vert _{L^{1}[0,1]}^{p},
\end{equation}
i.e., $Ces_{p}[0,\infty )|_{\left[ 0,1\right] } = Ces_{p}\left[ 0,1\right] \cap L^{1}\left[ 0,1\right] $ for 
$1 < p < \infty $, see \cite[Remark 5]{AM09}. In the next lemma we will show an analogue of equality 
(\ref{CX[0,8) = CX[0,1] n L^1[0,1]}) for abstract Ces\`aro function spaces $CX$. However, let us 
notice now that according to definitions (\ref{2}) and (\ref{22}), we have
\begin{equation}
CX[0,\infty )|_{\left[ 0,1 \right] } = \left( CX[0,\infty ) \right) |_{\left[ 0,1 \right] } 
:= \left\{ f \in L^0[0,1] \colon C(\abs{f}\chi_{[0,1]}) \in X[0,\infty) \right\},
\end{equation}
with the norm $\norm{f}_{CX[0,\infty )|_{[0,1]}} = \norm{C(\abs{f}\chi_{[0,1]})}_{X[0,\infty )}$, and
\begin{equation} \label{Konstrukcja CX dla obcietego X}
C(X[0,\infty )|_{[0,1]})) := C(\mathfrak{X}[0,1]) = \left\{ f \in L^0[0,1] \colon \left( C\abs{f} \right) 
\chi_{(0,1]} \in X[0,\infty )\right\},
\end{equation}
with the norm $\norm{f}_{C(X[0,\infty )|_{[0,1]}))} = \norm{(C\abs{f})\chi_{(0,1]}}_{X[0,\infty)}$
(cf. with the definition of the Ces\`aro operator $C$ and the abstract Ces\`aro space $CX$).
Roughly speaking, if $X$ is a Banach function space on $[0,\infty)$, then we always have two 
ways (in general, nonequivalent) to obtain the Ces\`aro function space on $[0,1]$ -- first applying 
the functor $X \mapsto \mathfrak{X}$ and then the Ces\`aro construction or vice versa.
All this also means that the Ces\`aro construction $X \mapsto CX$ is significantly different for 
$I = [0,1]$ and $I = [0,\infty)$.

The equality (\ref{CXrownanie1}) from Lemma \ref{obciecie} below is an abstract version of equality 
(\ref{CX[0,8) = CX[0,1] n L^1[0,1]}) from \cite{AM09} and means that the functor 
$X \mapsto \mathfrak{X}$ does not comutate in general with the Ces\`aro construction
$X \mapsto CX$ (that is, $\mathfrak{CX}[0,1] = C\mathfrak{X}[0,1] \cap L^1[0,1]$).
This result explains also, in some sense, a rather suprising difference in the description 
of the K\"othe duality of Ces\`aro function spaces $CX$ on $I=[0,1]$ and on $I=[0,\infty)$, 
cf. \cite[Theorems 3, 5 and 6]{LM15a} or Theorem A.

%%%%%%%%%%%%%%% Lemma 5.1
\begin{lemma} \label{obciecie}
{\it Let $X$ be a Banach function space on $I$ such that either the Ces\`aro operator $C$ is bounded 
on $X$ or $X$ is a symmetric space on $[0,1]$ or $X$ is a symmetric space on $[0,\infty)$ with 
$CX[0,\infty) \neq \{ 0 \}$. Then the following embedding
\begin{equation} \label{3}
CX(I)|_{[0,\lambda]} \hookrightarrow L^{1}(I)|_{[0,\lambda]}
\end{equation}
holds for $0 < \lambda < m(I)$, but in general not for $\lambda = 1$ if $I = [0,1]$, and not for 
$\lambda =\infty$ if $I = [0,\infty)$. Moreover, the following equalities hold
\begin{equation} \label{CXrownanie1}
CX[0,\infty)|_{\left[ 0,1\right]} = C(X[0,\infty)|_{[0,1]}) \cap L^{1}[0,1],
\end{equation}
and
\begin{equation} \label{CX-lambda}
CX[0,\infty )|_{\left[ 0,\lambda \right]} = (C(X[0,\infty)|_{[0,1]}))|_{\left[ 0,\lambda \right]} 
\quad \text{for} \quad 0 < \lambda < 1. 
\end{equation}
Finally, if $X$ is a symmetric space on $[0,\infty)$ with $q(X) < \infty $, then 
$C(X[0,\infty)|_{[0,1]}) \neq CX[0,\infty)|_{[0,1]}$ and the space $C(X[0,\infty)|_{[0,1]})$ is 
never a subspace of $CX[0,\infty)|_{[0,1]}$.
}
\end{lemma}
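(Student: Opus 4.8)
The plan is to base everything on one pointwise identity. For $f\in L^0[0,1]$ (extended by zero to $[0,\infty)$), since $\supp(f)\subset[0,1]$ one has
\[
C(\abs{f}\chi_{[0,1]})(x)=(C\abs{f})(x)\,\chi_{(0,1]}(x)+\frac{\norm{f}_{L^1[0,1]}}{x}\,\chi_{(1,\infty)}(x),
\]
a sum of two functions with disjoint supports. Under the stated hypotheses Theorem D guarantees that $\frac{1}{x}\chi_{[\lambda,m(I))}\in X$ for every $0<\lambda<m(I)$ (in particular $\frac{1}{x}\chi_{(1,\infty)}\in X$ when $I=[0,\infty)$), and this single fact, combined with the identity above, drives all four assertions. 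Throughout one must keep the three distinct truncation conventions carefully apart.

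To prove the embedding (\ref{3}) I would observe that if $\supp(f)\subset[0,\lambda]$ then $C\abs{f}(x)=\frac{1}{x}\norm{f}_{L^1}$ for every $x\geq\lambda$, so $C\abs{f}\geq\frac{\norm{f}_{L^1}}{x}\chi_{[\lambda,m(I))}$ pointwise; taking $\norm{\cdot}_X$ yields $F_X(\lambda)\norm{f}_{L^1}\leq\norm{f}_{CX}$, which is exactly (\ref{3}) with embedding constant $1/F_X(\lambda)$. For the failure at the endpoint I would exhibit explicit counterexamples that still satisfy the hypotheses: for $\lambda=1$, $I=[0,1]$ take $X=L^1[0,1]$, where $CX=L^1(\ln(1/t))$ and the functions $\chi_{[1-1/n,1]}$ have $\norm{\cdot}_{CX}\asymp n^{-2}$ but $\norm{\cdot}_{L^1}\asymp n^{-1}$; for $\lambda=\infty$, $I=[0,\infty)$ take $X=L^p$ with $1<p<\infty$ and the functions $\chi_{[0,n]}$, whose $Ces_p$-norm is $\asymp n^{1/p}$ while their $L^1$-norm equals $n$.

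For the equality (\ref{CXrownanie1}) I would read off from the identity that $C(\abs{f}\chi_{[0,1]})\in X$ if and only if both summands lie in $X$; since $\frac{1}{x}\chi_{(1,\infty)}$ is a fixed nonzero element of $X$, the second summand belongs to $X$ precisely when $\norm{f}_{L^1}<\infty$, giving $CX[0,\infty)|_{[0,1]}=C(X[0,\infty)|_{[0,1]})\cap L^1[0,1]$, with the norm equivalence following from disjointness of supports. For (\ref{CX-lambda}), when $\supp(f)\subset[0,\lambda]$ with $\lambda<1$ one has $C(\abs{f}\chi_{[0,\lambda]})=C\abs{f}$, and the only discrepancy between the conditions $C\abs{f}\in X$ and $(C\abs{f})\chi_{(0,1]}\in X$ is again the tail $\frac{\norm{f}_{L^1}}{x}\chi_{(1,\infty)}$; but the second condition already forces $\norm{f}_{L^1}<\infty$, because on $(\lambda,1]$ one has $(C\abs{f})(x)=\frac{\norm{f}_{L^1}}{x}$ and $\frac{1}{x}\chi_{(\lambda,1]}$ is a fixed nonzero element of $X$. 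Hence the two spaces coincide with equivalent norms.

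The final assertion is the heart of the matter. By (\ref{CXrownanie1}) the inclusion $CX[0,\infty)|_{[0,1]}\subseteq C(X[0,\infty)|_{[0,1]})$ always holds, and either equality or the reverse inclusion would force $C(X[0,\infty)|_{[0,1]})\subseteq L^1[0,1]$; so it suffices to find $f\in C(X[0,\infty)|_{[0,1]})\setminus L^1[0,1]$. I would take $f(t)=\frac{1}{1-t}\chi_{[0,1)}(t)$, for which $\norm{f}_{L^1[0,1]}=\infty$ while $C\abs{f}(x)=\frac{-\ln(1-x)}{x}$ is bounded near $0$ and grows only like $\ln\frac{1}{1-x}$ near $1$, so that the decreasing rearrangement of $(C\abs{f})\chi_{(0,1]}$ is $\asymp\ln(1/s)$ on $(0,1)$. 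This is exactly where $q(X)<\infty$ enters: by Theorem B it yields $L^p\cap L^q\hookrightarrow X$ for some finite $q>q(X)$, and since $\ln(1/s)\chi_{(0,1)}\in L^r$ for every finite $r$, rearrangement invariance gives $(C\abs{f})\chi_{(0,1]}\in X$, i.e. $f\in C(X[0,\infty)|_{[0,1]})$. Thus $C(X[0,\infty)|_{[0,1]})\not\subseteq L^1[0,1]$, which simultaneously gives $C(X[0,\infty)|_{[0,1]})\neq CX[0,\infty)|_{[0,1]}$ and shows the former can never be a subspace of the latter. I expect this last step to be the main obstacle: the rest is essentially bookkeeping around the basic identity, whereas here one must manufacture a non-integrable $f$ whose Ces\`aro average is nonetheless tame enough to sit in $X$, and this works only because the singularity of $C\abs{f}$ at $1$ is merely logarithmic and $q(X)<\infty$ converts this into membership in $X$.
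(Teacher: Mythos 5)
Your proof is correct and follows essentially the same route as the paper's: the pointwise decomposition $C(\abs{f}\chi_{[0,1]})=(C\abs{f})\chi_{(0,1]}+\norm{f}_{L^1[0,1]}\frac{1}{x}\chi_{(1,\infty)}(x)$ together with Theorem D (guaranteeing $\frac{1}{x}\chi_{(\lambda,m(I))}(x)\in X$) is exactly what drives the paper's two displayed estimates for (\ref{CXrownanie1}) and the lower bound for (\ref{3}), and your final assertion uses the very same function $f(t)=1/(1-t)$ with $q(X)<\infty$ and Theorem B, as in the paper. The only harmless deviations are that you construct explicit endpoint counterexamples (both correct, and both satisfying the lemma's hypotheses) where the paper simply cites \cite[Theorem 1 (d)]{AM09}; that you obtain (\ref{CX-lambda}) directly from the identity (noting the $L^1$-condition is automatic for $\supp(f)\subset[0,\lambda]$, $\lambda<1$, since $(C\abs{f})\chi_{(\lambda,1]}=\norm{f}_{L^1}\frac{1}{x}\chi_{(\lambda,1]}$) instead of the paper's formal truncation rules $(X\cap Y)|_A\equiv X|_A\cap Y|_A$ and $(X|_A)|_B\equiv X|_{A\cap B}$ combined with (\ref{3}); and that you verify $f\in C(X[0,\infty)|_{[0,1]})$ via the decreasing rearrangement, which is $\lesssim 1+\ln(1/s)$ rather than $\asymp\ln(1/s)$ near $s=1$, but only the upper bound is needed, whereas the paper computes $\int_0^1(\frac{1}{x}\ln\frac{1}{1-x})^q\,\mathrm{d}x<\infty$ after establishing $L^q[0,1]\hookrightarrow X[0,\infty)|_{[0,1]}$.
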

%%%%%%%%%%%%%%%%%%

\begin{proof}[Proof of the embedding {\rm (\ref{3})}] Take $f \in CX(I)$ with $\supp(f) \subset [0,\lambda]$, 
where $0 < \lambda < m(I)$. In this case we must ensure that 
$(f_\lambda \colon I \ni x \mapsto \frac{1}{x}\chi_{(\lambda,m(I))}(x)) \in X$. But this immediately 
follows from Theorem D and our assumptions. Keeping this in mind we have the following inequalities
\begin{align*}
\left\Vert f\right\Vert _{CX(I)}:=\left\Vert \frac{1}{x}\int_{0}^{x}\left\vert
f(t)\right\vert \mathrm{d}t\right\Vert _{X(I)}
& =\left\Vert \frac{1}{x} \int_{0}^{x}\left\vert f(t)\right\vert \mathrm{d}t\chi _{(0,\lambda ]}(x)+ 
\frac{1}{x}\int_{0}^{\lambda }\left\vert f(t)\right\vert \mathrm{d}t\chi_{(\lambda ,m(I) )}(x)\right\Vert _{X(I)} \\
& \geq \left\Vert \frac{1}{x}\int_{0}^{\lambda }\left\vert f(t)\right\vert 
\mathrm{d}t\chi _{(\lambda, m(I))}(x)\right\Vert _{X(I)} \\
& =\left\Vert \frac{1}{x}\chi _{(\lambda, m(I))}(x)\right\Vert_{X(I)}\left\Vert f\right\Vert _{L^{1}(I)|_{[0,\lambda ]}}.
\end{align*} 
Therefore, $CX(I)|_{[0,\lambda]} \hookrightarrow L^{1}(I)|_{[0,\lambda]}$. Counterexamples for
embeddings $CX \hookrightarrow L^1$ when either $\lambda = 1$ or $\lambda = \infty$ can be found 
in \cite[Theorem 1 (d)]{AM09}. Note, however, that for example  $Ces_\infty[0,1] \hookrightarrow L^1[0,1]$, 
see \cite[Theorem 1 (d)]{AM09}.	
\smallskip
	
{\it Proof of the equality} {\rm (\ref{CXrownanie1})}. Take $f \in C(X[0,\infty)|_{[0,1]}) \cap L^1[0,1]$.
To prove this equality we will need to know that 
$(f_{\lambda = 1} \colon [0,\infty) \ni x \mapsto \frac{1}{x}\chi_{(1, \infty)}(x)) \in X[0,\infty)$.
Again, our assumptions together with Theorem D ensure that this is indeed the case. Since
\begin{align*}
\left\Vert f\right\Vert _{CX[0,\infty )} & \leq \left\Vert \frac{1}{x} 
\int_{0}^{x}\left\vert f(t)\right\vert \mathrm{d}t\chi _{(0,1]}\left(x\right) \right\Vert _{X[0,\infty )}+\left\Vert \frac{1}{x}
\int_{0}^{1}\left\vert f(t)\right\vert \mathrm{d}t\chi _{\left( 1,\infty \right) }\left( x\right) \right\Vert _{X[0,\infty )} \\
& =\left\Vert f\right\Vert _{C(X[0,\infty)|_{[0,1]})}+\left\Vert \frac{1}{x}\chi _{\left(1,\infty \right) }\left( x\right) 
\right\Vert _{X[0,\infty )}\left\Vert
f\right\Vert _{L^{1}\left[ 0,1\right] }
\end{align*}
it follows that
\begin{equation*}
C(X[0,\infty)|_{[0,1]}) \cap L^{1}\left[ 0,1\right] \hookrightarrow CX[0,\infty)|_{\left[ 0,1\right] }.
\end{equation*}
	
Next, we will show the reverse embedding. Take $f\in CX[0,\infty )$ with $\supp(f)\subset \lbrack 0,1]$.
Again, as above, we have $\frac{1}{x}\chi_{(1, \infty)}(x) \in X$, whence
\begin{align*}
\left\Vert f\right\Vert _{CX[0,\infty )} & \geq \max \left\{ \left\Vert
\left( C\abs{f}\right) \chi _{\left( 0,1\right) }\right\Vert _{X[0,\infty
)},\left\Vert \frac{1}{x}\int_{0}^{1}\left\vert f(t)\right\vert \mathrm{d}t\chi _{\left( 1,\infty
\right) }\left( x\right)\right\Vert_{X[0,\infty )}\right\} \\
& =\max \left \{ \left\Vert f\right\Vert _{C(X[0,\infty)|_{[0,1]})},\left\Vert \frac{1}{x}\chi _{\left( 1,\infty
\right) }\left( x\right) \right\Vert _{X[0,\infty)} \left\Vert f\right\Vert_{L^{1}\left[ 0,1\right] } \right \}.
\end{align*}
In consequence,
\begin{equation*}
CX[0,\infty )|_{\left[ 0,1\right] }\hookrightarrow C(X[0,\infty)|_{[0,1]}) \cap L^{1}\left[ 0,1\right],
\end{equation*}
which proves the equality (\ref{CXrownanie1}).
\smallskip

{\it Proof of the equality} {\rm (\ref{CX-lambda})}. Of course, if $X$ and $Y$ are Banach function 
spaces on $I$, then
\begin{equation} \label{1234}
(X \cap Y)|_{A} \equiv X|_{A} \cap Y|_{A} \quad \text{for every} \quad A \subset I,
\end{equation}
and because $\norm{f}_{(X|_{A})|_{B}} = \norm{f\chi_{B}}_{X|_{A}} = \norm{f\chi_{B}\chi_{A}}_X 
= \norm{f\chi_{A \cap B}}_X = \norm{f}_{X|_{A \cap B}}$, so
\begin{equation*}
(X|_{A})|_{B} \equiv X|_{A \cap B} \quad \text{for every} \quad A, B \subset I.
\end{equation*}
Combining the above equalities with the embedding (\ref{3}) we get
\begin{align*}
CX[0,\infty)|_{[0,\lambda]} & \equiv (CX[0,\infty)|_{[0,1]})|_{[0,\lambda]} \\
& = (C(X[0,\infty)|_{[0,1]}) \cap L^{1}[0,1])|_{[0,\lambda]} \\
& = (C(X[0,\infty)|_{[0,1]}))|_{[0,\lambda]}.
\end{align*}
This gives the equality (\ref{CX-lambda}).
	
Finally, suppose that $q(X) < \infty$. Then as in \cite[Theorem 1 (d)]{AM09} we can show that the function
$$
f(x) = \frac{1}{1-x} \quad \text{for} \quad 0 \leq x < 1,
$$
belongs to the space $C(X[0,\infty)|_{[0,1]})$. In fact, by (\ref{wlozenie}) Theorem B, we have
$L^{1} \cap L^{q}[0,\infty) \overset{A}{\hookrightarrow} X[0,\infty)$ for $q(X) < q <\infty$. Therefore,
using equation (\ref{1234}) we have
$$
L^q[0,1] = L^{1} \cap L^{q}[0,1] = (L^{1} \cap L^{q}[0,\infty))|_{[0,1]} \overset{A}{\hookrightarrow} X[0,\infty)|_{[0,1]}.
$$
Moreover, $\int_0^1(\frac{1}{x}\ln(\frac{1}{1-x}))^q \mathrm{d}x < \infty$ (cf. \cite[Theorem 1 (d), p. 334]{AM09}),
whence 
\begin{align*}
\left\Vert f\right\Vert _{C(X[0,\infty)|_{[0,1]})}^{q} & =\left\Vert C\left\vert f\right\vert
\right\Vert _{X[0,\infty)|_{[0,1]}}^{q} \lesssim \left\Vert C\left\vert f\right\vert
\right\Vert _{L^{q}[0,1]}^{q} \\
& = \int_{0}^{1}(\frac{1}{x}\int_{0}^{x}\frac{\mathrm{d}t}{1-t})^{q}
\mathrm{d}x = \int_{0}^{1}(\frac{1}{x}\ln (\frac{1}{1-x}))^{q}\mathrm{d}x < \infty.
\end{align*}
Of course, $f\notin L^{1}[0,1]$ and this ends the proof of this lemma.
\end{proof}

Let us also note that if $X = L^\infty[0,\infty)$, then $q(X) = \infty$ and $L^\infty[0,\infty)|_{[0,1]} \equiv L^\infty[0,1]$.
Therefore, $C(X[0,\infty)|_{[0,1]}) = Ces_\infty[0,1]$ and, in view of equality (\ref{CXrownanie1}) and embedding
$Ces_\infty[0,1] \hookrightarrow L^1[0,1]$ (see \cite[Theorem 1 (d)]{AM09}), we have
$$CX[0,\infty)|_{[0,1]} = Ces_\infty[0,\infty)|_{[0,1]} = Ces_\infty[0,1] \cap L^1[0,1] = Ces_\infty[0,1].$$
This means that it can happen that $C(X[0,\infty)|_{[0,1]}) = CX[0,\infty)|_{[0,1]}$ and the assumption about
the Boyd index in the last part of the above lemma cannot be omitted.

%%%%%%%%%%%%%%%%% Sec 6
\section{On a transfer of properties between $X$ and $TX$}
%%%%%%%%%%%%%%%%

Inclusions and equalities between Ces\`aro spaces $CX$ and Copson spaces $C^*X$ are collected, 
for example, in \cite[Theorem 1]{LM15p} (see also \cite{AM09} and \cite{Be96}). Recall, that if $X$ 
is a Banach function space on $[0,\infty)$ such that both operators $C$ and $C^*$ are bounded on 
$X$, then 
\begin{equation} \label{111}
C^*X \lhook\joinrel\xrightarrow{A} CX \lhook\joinrel\xrightarrow{B} C^*X,
\end{equation}
where $A = \norm{C}_{X \rightarrow X}$ and $B = \norm{C^*}_{X \rightarrow X}$, that is, $CX = C^*X$, 
see \cite[Theorem 1 (iii)]{LM15p}. However, if $I = [0,1]$, then the situation is a bit more complicated. 
More precisely, if $X$ is a Banach function space on $[0,1]$ such that the Ces\`aro and Copson 
operators are bounded on $X$ and $L^\infty[0,1] \hookrightarrow X \hookrightarrow L^1[0,1]$
(for example, if $X$ is a symmetric space), then
\begin{equation} \label{1111}
CX[0,1] \cap L^1[0,1] = C^*X[0,1],
\end{equation}
see \cite[Theorem 1 (vi) and (vii)]{LM15p}. Therefore, at least when we consider Banach function spaces 
on $[0,\infty)$ such that both operators $C$ and $C^*$ are bounded on $X$, all results regarding 
the isomorphic structure of the Ces\`aro function spaces ``transfer" almost trivially to the case of 
the Copson function spaces and vice versa. However, it may not be the case if $I = [0,1]$. After all, 
we will prove the following

%%%%%%%%%%%% Corr 6.1
\begin{corollary} \label{Copson a Cesaro}
{\it Let $X$ be a symmetric space such that both operators $C$ and $C^*$ are bounded on $X$.
Then the Copson space $C^*X$ is order continuous if and only if $X$ is order continuous.}
\end{corollary}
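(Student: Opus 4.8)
The plan is to reduce everything to the transfer of order continuity between $X$ and $CX$ established in \cite[Theorem 3]{KT17} (applicable here since $C$ is bounded on the symmetric space $X$), exploiting the identities that tie the Copson space $C^*X$ to the Ces\`aro space $CX$. On $I=[0,\infty)$ this is immediate: as both $C$ and $C^*$ are bounded on $X$, the embeddings \eqref{111} give $C^*X = CX$ with equivalent norms, and order continuity is insensitive to passing to an equivalent norm (since $\norm{f\chi_{A_n}}'\to 0 \iff \norm{f\chi_{A_n}}\to 0$ for comparable norms). Hence $C^*X$ is order continuous iff $CX$ is, iff $X$ is. The whole issue is therefore the interval $I=[0,1]$, where instead \eqref{1111} only yields $C^*X = CX\cap L^1$ (with equivalent norms), the extra intersection with $L^1$ being precisely the phenomenon isolated in Lemma \ref{obciecie}.

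First I would settle the easy implication on $[0,1]$. If $X$ is order continuous, then $CX$ is order continuous by \cite[Theorem 3]{KT17}, while $L^1[0,1]$ is always order continuous. Equipping $CX\cap L^1$ with the equivalent norm $\max\{\norm{\cdot}_{CX},\norm{\cdot}_{L^1}\}$, one checks directly that $f$ has order continuous norm in $CX\cap L^1$ exactly when it does so in both $CX$ and $L^1$; since both factors are order continuous, every $f\in CX\cap L^1$ qualifies, so $CX\cap L^1 = C^*X$ is order continuous.

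For the converse I would argue by contraposition, producing a single function in $C^*X$ that fails to have order continuous norm. Suppose $X$ is not order continuous. Since $X$ is symmetric and $X\neq X_a$, the standard description of the order continuous subspace of a symmetric space (cf. \cite{BS88}) provides a nonincreasing $0\le f=f^*\in X$ with $\norm{f^*\chi_{[0,t]}}_X\ge\delta>0$ for every $t\in(0,1)$. Because $X\hookrightarrow L^1$ and $X\hookrightarrow CX$ (the latter as $C$ is bounded), we have $f\in CX\cap L^1 = C^*X$. Now I use the pointwise bound $Cf\ge f$, valid since $f$ is nonincreasing so that $\frac{1}{x}\int_0^x f\ge f(x)$, localized to $[0,1/n]$: the function $C(f\chi_{[0,1/n]})$ agrees with $Cf$ on $[0,1/n]$, so by the ideal property $\norm{f\chi_{[0,1/n]}}_{CX}=\norm{C(f\chi_{[0,1/n]})}_X\ge\norm{Cf\,\chi_{[0,1/n]}}_X\ge\norm{f^*\chi_{[0,1/n]}}_X\ge\delta$. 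Finally, the bounded inclusion $C^*X\hookrightarrow CX$ read off from \eqref{1111} upgrades this to $\norm{f\chi_{[0,1/n]}}_{C^*X}\gtrsim\delta$ along $[0,1/n]\downarrow\emptyset$, so $f$ is not order continuous in $C^*X$; hence $C^*X$ is not order continuous.

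The main obstacle is exactly this last (contrapositive) step on $[0,1]$: unlike the $[0,\infty)$ case, $C^*X$ is the proper intersection $CX\cap L^1$, and one must ensure that the witness detecting the failure of order continuity is not destroyed by intersecting with the order continuous space $L^1$. What makes it work is that the chosen witness is \emph{decreasing} and its failure of order continuity is concentrated near $0$, where the Ces\`aro averages only increase ($Cf\ge f$); consequently the $L^1$-component plays no role in the lower bound, so intersecting with $L^1$ cannot restore order continuity.
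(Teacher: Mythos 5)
Your proof is correct, and its overall skeleton coincides with the paper's: both dispose of $I=[0,\infty)$ immediately via \eqref{111}, prove the forward implication on $[0,1]$ by combining order continuity of $CX$ and of $L^1$ with the identity \eqref{1111}, and prove the converse by contraposition using a nonincreasing witness $f=f^*\notin X_a$ (the paper cites \cite[Lemma 2.6]{CKP14} for this reduction) together with the inequality $Cf\ge f$ for decreasing $f$. The genuine difference lies in how the witness is shown to fail order continuity in $CX$: the paper argues structurally, using that $X_a$ is an order ideal of $X$ to get $C(f_0)\notin X_a$ and then invoking the identity $(CX)_a=C(X_a)$ from \cite[Theorem 16]{KT17}, whereas you re-derive the needed special case by a direct quantitative estimate, $\norm{f\chi_{(0,1/n]}}_{CX}\ge\norm{(Cf)\chi_{(0,1/n]}}_X\ge\norm{f^*\chi_{(0,1/n]}}_X\ge\delta$, and then pass to $C^*X$ via the continuous inclusion $C^*X=CX\cap L^1\hookrightarrow CX$, exactly as the paper's final step does. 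Your route is more self-contained (no appeal to \cite[Theorem 16]{KT17}) and makes explicit why intersecting with the order continuous space $L^1$ cannot restore order continuity --- the obstruction is concentrated near $0$, where averaging only increases a decreasing function --- at the cost of one extra verification you assert as standard: that failure of order continuity of $f=f^*$ in a symmetric space on $[0,1]$ localizes as $\norm{f^*\chi_{[0,t]}}_X\ge\delta$ for all $t$, which indeed follows on finite measure from $(f\chi_{A})^*\le f^*\chi_{[0,m(A)]}$ and monotonicity of $t\mapsto\norm{f^*\chi_{[0,t]}}_X$. One cosmetic point: use $(0,1/n]$ rather than $[0,1/n]$ so that the decreasing sequence of sets has literally empty intersection, as required by the definition of order continuous norm.
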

%%%%%%%%%%%
\begin{proof}
First of all, $X$ is order continuous if and only if $CX$ is also, whenever $X$ is a symmetric space 
such that the Ces\`aro operator $C$ is bounded on $X$, see \cite[Theorem 3]{KT17}). Thus, 
according to the equality (\ref{111}) and the discussion preceding this result, there is noting to 
prove when $I = [0,\infty)$. Therefore, let us focus on the case when $I = [0,1]$.
	
Suppose that $X \in (OC)$. If $X \in (OC)$, then $CX \in (OC)$, see \cite[Lemma 1 (a)]{LM15p}. 
Of course, $L^1 \in (OC)$, so $CX[0,1] \cap L^1[0,1] \in (OC)$. Using the equality (\ref{1111}) we 
see immediately that $C^*X[0,1] \in (OC)$.
	
To prove the reverse implication, assume that $X \notin (OC)$. Because $X$ is a symmetric space, 
we can find an element $f_0 \in X$ such that $f_0 \notin X_a$ but $f_0 \in L^1[0,1]$. Without loss 
of generality, we can also assume that $f_0 = f_0^*$, see \cite[Lemma 2.6]{CKP14}. From the 
boundedness of the Ces\`aro operator $C$ it follows that $C(f_0) \in X$. Moreover, the element 
$f_0$ is a nonincreasing function and $X_a$ is an order ideal of $X$ (see \cite[Theorem 3.8, p. 16]{BS88}),
so $C(f_0) \geq f_0$ and also $C(f_0) \notin X_a$. Therefore, $f_0 \notin C(X_a) = (CX)_a$, 
see \cite[Theorem 16]{KT17}. In summary, $f_0 \in (CX\setminus (CX)_a) \cap L^1[0,1]$. But, in view 
of the equality (\ref{1111}), this means that $f_0 \in C^*X\setminus (C^*X)_a$, i.e., $C^*X \notin (OC)$.
\end{proof}

It may happen that applying the construction $X \mapsto TX$, where $T = C$ or $T = C^*$, we lose 
some information about the original space $X$. We will give rather general example of this kind.

The idea behind the next lemma is simple. Every Ces\`aro and Copson function space contain ``in the 
middle" an isomorphic copy of $L^1[0,1]$ (cf. Lemma \ref{complementedL1} and Lemma 
\ref{complemented L1 in C*X}). Therefore, up to equivalence of norms, we can change the space $X$
``in the middle" (cf. the equality (\ref{98})) and still get the Ces\`aro or Copson function space 
equal to the original one (cf. the equality (\ref{CX = CZ})).

%%%%%%%%%%%%%%%%%%% Lemma 6.2
\begin{lemma} \label{CX zjada srodek}
{ \it Let $T = C$ or $T = C^*$. Define the Banach function space $Z = Z[0,1]$ as
\begin{equation} \label{98}
Z[0,1] := X|_{[0,a]} \oplus Y|_{[a,b]} \oplus X|_{[b,1]} \quad \text{for} \quad 0 < a < b < 1,
\end{equation}
where $X$ and $Y$ are Banach function spaces on $[0,1]$ such that 
$L^\infty[0,1] \hookrightarrow Y \hookrightarrow X$. Then
\begin{equation} \label{CX = CZ}
TZ[0,1] = TX[0,1].
\end{equation}
}
\end{lemma}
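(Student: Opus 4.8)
The plan is to establish the two–sided estimate $\norm{f}_{TX} \approx \norm{f}_{TZ}$ for every $f$, which gives $TZ = TX$ with equivalent norms. Write $g := T\abs{f} \geq 0$, so that $\norm{f}_{TX} = \norm{g}_X$ and, up to the usual equivalence in the direct sum, $\norm{f}_{TZ} = \norm{g\chi_{[0,a]}}_X + \norm{g\chi_{[a,b]}}_Y + \norm{g\chi_{[b,1]}}_X$. Since $L^\infty[0,1] \hookrightarrow Y \hookrightarrow X$ we have $\chi_{[0,1]} \in X$, hence $CX \neq \{0\}$ and $C^*X \neq \{0\}$ by Theorem D and Lemma \ref{Copson nontrivial}; in particular $\norm{\chi_{[0,a]}}_X$ and $\norm{\chi_{[b,1]}}_X$ are strictly positive. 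The inclusion $TZ \hookrightarrow TX$ is then immediate: by the triangle inequality and $Y \hookrightarrow X$ one has $\norm{g}_X \leq \norm{g\chi_{[0,a]}}_X + \norm{g\chi_{[a,b]}}_X + \norm{g\chi_{[b,1]}}_X \lesssim \norm{g}_Z$, so that $\norm{f}_{TX} \lesssim \norm{f}_{TZ}$.

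The substance lies in the reverse estimate $\norm{g}_Z \lesssim \norm{g}_X$. The two outer terms are harmless, since $\norm{g\chi_{[0,a]}}_X, \norm{g\chi_{[b,1]}}_X \leq \norm{g}_X$ by the ideal property, so the whole matter reduces to bounding the middle term $\norm{g\chi_{[a,b]}}_Y$. Here I would invoke $L^\infty[0,1] \hookrightarrow Y$ to pass to the supremum norm, $\norm{g\chi_{[a,b]}}_Y \lesssim \esssup_{x \in [a,b]} g(x)$, and then control this supremum by $\norm{g}_X$ using the monotone structure carried by $g = T\abs{f}$.

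The key step, and the main obstacle, is precisely this supremum bound, where the two cases diverge. For $T = C$, the map $x \mapsto x\,g(x) = \int_0^x \abs{f}$ is nondecreasing; hence $g(x) \leq \tfrac1a \int_0^b \abs{f}$ for $x \in [a,b]$, while $g(y) \geq \int_0^b\abs{f}$ for $y \in [b,1]$ (using $y \leq 1$). The latter yields $\big(\int_0^b\abs{f}\big)\,\chi_{[b,1]} \leq g\chi_{[b,1]}$, so $\int_0^b\abs{f} \leq \norm{g}_X / \norm{\chi_{[b,1]}}_X$ and thus $\esssup_{[a,b]} g \leq \norm{g}_X /\big(a\,\norm{\chi_{[b,1]}}_X\big)$. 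For $T = C^*$, the function $g = C^*\abs{f}$ is nonincreasing, so $\esssup_{[a,b]} g = g(a)$; since $g(x) \geq g(a)$ for $x \in [0,a]$, we get $g(a)\,\chi_{[0,a]} \leq g\chi_{[0,a]}$, whence $g(a) \leq \norm{g}_X/\norm{\chi_{[0,a]}}_X$. Either way $\esssup_{[a,b]} g \lesssim \norm{g}_X$, so $\norm{g\chi_{[a,b]}}_Y \lesssim \norm{g}_X$ and $\norm{g}_Z \lesssim \norm{g}_X$, completing the equivalence. Finally, the same estimates give the set equality directly: if $g = T\abs{f} \in X$ then $g\chi_{[a,b]}$ is bounded, hence lies in $L^\infty \hookrightarrow Y$, so $g \in Z$ and $f \in TZ$; conversely $g \in Z$ forces $g\chi_{[a,b]} \in Y \hookrightarrow X$, so $g \in X$ and $f \in TX$.
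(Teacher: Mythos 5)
Your argument is correct, and it reaches the conclusion by a genuinely different route than the paper. The paper splits the function $f$ itself as $f\chi_{[0,a)}+f\chi_{[a,b)}+f\chi_{[b,1]}$ and verifies that each piece lies in $TZ$; the essential middle piece is dispatched by the identification $TW|_{[a,b]}=L^1[a,b]$, valid whenever $\supp(TW)=[0,1]$, i.e., by invoking Lemma \ref{complementedL1} and Lemma \ref{complemented L1 in C*X} -- the ``complemented copy of $L^1$ in the middle'' theme of the paper. You never split $f$: you test $g=T\abs{f}$ directly against the three summands of $Z$, note that the outer components come for free from the ideal property, and reduce the whole matter to showing $g\chi_{[a,b]}\in L^\infty$, which you obtain from elementary monotonicity ($x\mapsto\int_0^x\abs{f(t)}\,\mathrm{d}t$ nondecreasing for $T=C$; $g$ nonincreasing for $T=C^*$) together with the lower bounds $\left(\int_0^b\abs{f(t)}\,\mathrm{d}t\right)\chi_{[b,1]}\leq g$, resp.\ $g(a)\chi_{[0,a]}\leq g$, which control the essential supremum by $\norm{g}_X$ since $\chi_{[b,1]},\chi_{[0,a]}\in X$ have positive norm. (Incidentally, this positivity follows already from $L^\infty[0,1]\hookrightarrow Y\hookrightarrow X$, so your detour through Theorem D and Lemma \ref{Copson nontrivial} is superfluous, though harmless.) What each approach buys: yours is self-contained, treats $C$ and $C^*$ symmetrically by a single mechanism, and yields the equivalence of norms with explicit constants, e.g.\ $\norm{f}_{TZ}\leq\left(2+M\left(a\norm{\chi_{[b,1]}}_X\right)^{-1}\right)\norm{f}_{TX}$ for $T=C$ with $M$ the constant of $L^\infty\hookrightarrow Y$; the paper's proof instead recycles its central structural lemmas and records in passing the identification $CX|_{[a,b]}=L^1[a,b]=CZ|_{[a,b]}$, which is precisely the heuristic (``$T$ forgets what happens in the middle'') the lemma is meant to illustrate. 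It is worth noting that your key boundedness observation does appear in the paper's proof, but only for the auxiliary pieces (there $(C\abs{f_1})\chi_{[a,b)}\in L^\infty$ and $(C^*\abs{f_3})\chi_{[a,b)}=C^*\abs{f}(b)<\infty$); you promote it to carry the entire middle estimate, thereby bypassing the $L^1$-identification altogether.
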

%%%%%%%%%%%%%%
\begin{proof}[Proof of the embeddings $CZ \hookrightarrow CX$ and $C^*X \hookrightarrow C^*Z$]
In the proof of this part we need only the assumption that $Y \hookrightarrow X$. Indeed, then
\begin{equation*}
Z[0,1] = X|_{[0,a]} \oplus Y|_{[a,b]} \oplus X|_{[b,1]} \hookrightarrow X|_{[0,a]} \oplus X|_{[a,b]} 
\oplus X|_{[b,1]} = X[0,1].
\end{equation*}
Note that just by the definition if $E$ and $F$ are Banach function spaces on $I$ and 
$E \hookrightarrow F$, then $TE \hookrightarrow TF$. Consequently, $TZ \hookrightarrow TX$.
\smallskip
	
{\it Proof of the embedding $CX \hookrightarrow CZ$}. First, observe that if $W$ is a Banach function 
space on $[0,1]$ such that $\supp \left(CW\right) =[0,1]$, then 
\begin{equation}
CW[0,1]|_{[a,b]}=L^{1}[0,1]|_{[a,b]}=L^{1}[a,b]\quad \text{for}\quad 0<a<b<1,
\label{101}
\end{equation}
due to Lemma \ref{complementedL1}. Take $f \in CX$ and denote $f_{1} = f\chi _{\lbrack 0,a)}$, 
$f_{2} = f\chi _{\lbrack a,b)}$ and $f_{3} = f\chi _{\lbrack b,1]}$. We need to show that 
$f_{1}, f_{2}, f_{3}\in CZ.$ Note that $\left( C\left\vert f_{1}\right\vert \right) \chi _{\lbrack 0,a)} 
=\left( C\left\vert f\right\vert \right) \chi_{\lbrack 0,a)}\in X.$ Moreover, $\left( C\left\vert f_{1}\right\vert
\right) \chi _{\lbrack a,b)}\in L^{\infty }\left[ 0,1 \right]$, so 
$\left( C\left\vert f_{1}\right\vert \right) \chi _{\lbrack a,b)}\in Y$, because
$L^{\infty}[0,1] \hookrightarrow Y$. Next, observe that 
$\left( C\left\vert f_{1}\right\vert \right) \chi _{\lbrack b,1)}\leq \left( C\left\vert f\right\vert \right)
\chi _{\lbrack b,1)}\in X$. Thus $C\left\vert f_{1}\right\vert \in Z$. Since 
$L^{\infty}[0,1] \hookrightarrow Y\hookrightarrow X$, so $L^\infty[0,1] \hookrightarrow Z$ and 
$\supp \left( CX\right) = \supp\left( CZ\right) = [0,1]$. In consequence, by equality (\ref{101}), we have
$f_{2}\in CX|_{[a,b]} = L^{1}[0,1]|_{[a,b]} = CZ|_{[a,b]}$. Finally, 
$C\left\vert f_{3}\right\vert = C\left(\left\vert f\right\vert \chi _{[b,1]}\right) \leq C\left( \left\vert
f\right\vert \right) \chi _{\lbrack b,1]}\in X|_{[b,1]} = Z|_{[b,1]}$. Consequently, $f \in CZ$, but this 
means that $CX\hookrightarrow CZ$.
\smallskip
	
{\it Proof of the embedding $C^{\ast }X\hookrightarrow C^{\ast }Z$}. Let $f\in C^{\ast }X$ and set 
$f_{1}=f\chi _{\lbrack 0,a)}$, $f_{2}=f\chi _{\lbrack a,b)}$ and $f_{3}=f\chi _{\lbrack b,1]}$. Since
$C^{\ast }\left\vert f_{1}\right\vert \leq C^{\ast }\left\vert f\right\vert \in X$, it follows that
$\left( C^{\ast }\left\vert f_{1}\right\vert \right) \chi _{\lbrack 0,a)}\in Z$. Moreover, 
$\left( C^{\ast }\left\vert f_{1}\right\vert \right) \chi_{\lbrack a,1]}\equiv 0,$ whence $f_{1}\in CZ$. 
Just as above we conclude that $\supp\left( C^{\ast }X\right) =\supp\left( C^{\ast }Z\right) =[0,1]$. 
Thus, by Lemma \ref{complemented L1 in C*X}, $f_{2}\in C^{\ast }X|_{[a,b]}=L^{1} \left[ a,b\right] = 
C^{\ast }Z|_{[a,b]}$. Moreover, $C^{\ast }\left\vert f_{3}\right\vert \leq C^{\ast }\left\vert f\right\vert \in X$, 
so $\left( C^{\ast }\left\vert f_{3}\right\vert \right) \chi _{\lbrack 0,a)} \in Z$ and 
$\left( C^{\ast }\left\vert f_{3}\right\vert \right) \chi _{\lbrack b,1]}\in Z$. Finally, note that 
$C^{\ast }\left\vert f\right\vert \in X$ and the function $C^{\ast }\left\vert f\right\vert$ is nonincreasing,
thus $\left( C^{\ast }\left\vert f_{3}\right\vert \right) \chi _{\lbrack a,b)}\left( x\right) =
\int_{b}^{1}\frac{\left\vert f\left( t\right) \right\vert }{t}dt=C^{\ast }\left\vert f\right\vert \left( b\right) <\infty $
for each $x \in \lbrack a,b)$. Therefore, we obtain that 
$\left( C^{\ast }\left\vert f_{3}\right\vert \right) \chi_{\lbrack a,b)}\in L^{\infty }[0,1]|_{[a,b]} \hookrightarrow Y|_{[a,b]}$ 
and this ends the proof.
\end{proof}

The above lemma can be viewed as an abstract version of Example 1 from \cite{LM15p}. In particular, 
if $X$ is a symmetric space on $[0,1]$, then $L^\infty[0,1] \hookrightarrow X \hookrightarrow L^1[0,1]$ and
$$
T(L^1[0,a] \oplus X|_{[a,b]} \oplus L^1[b,1]) \simeq L^1[0,1],
$$
but
$$
T(X|_{[0,a]} \oplus L^\infty[a,b] \oplus X|_{[b,1]}) = TX[0,1].
$$
In connection with the above lemma, the following simple observation is worth noting.

%%%%%%%%%%%%%%%%% Lemma 6.3
\begin{lemma} \label{X = Y, to CX = CY}
{\it Let $T = C$ or $T = C^*$. Assume that $X$ and $Y$ are symmetric spaces on $I$ such that 
\begin{enumerate}
\item [(i)] the Ces\`aro operator $C$ is bounded on $X$ and $Y$, respectively, if $T = C$,
\item [(ii)] both operators $C$ and $C^*$ are bounded on $X$ and $Y$, respectively, if $T = C^*$.
\end{enumerate}
Then $X = Y$ if and only if $TX = TY$.}
\end{lemma}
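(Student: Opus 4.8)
The forward implication is immediate and requires no boundedness hypothesis: if $X = Y$ with equivalent norms, then for every admissible $f$ one has $f \in TX \iff T\abs{f} \in X \iff T\abs{f} \in Y \iff f \in TY$ and $\norm{f}_{TX} = \norm{T\abs{f}}_X \approx \norm{T\abs{f}}_Y = \norm{f}_{TY}$, so $TX = TY$. The substance of the lemma is therefore the converse, namely recovering the symmetric space $X$ from $TX$. My plan rests on a single structural observation: a symmetric space is completely determined by the restriction of its norm to the cone of nonincreasing nonnegative functions, since $\norm{g}_X = \norm{g^*}_X$ and $g \in X \iff g^* \in X$. Consequently it suffices to produce constants $0 < c_1 \le c_2$ (depending only on the operator norms and the identification constants below) such that
$$ c_1\norm{f}_X \le \norm{f}_{TX} \le c_2\norm{f}_X $$
for every nonincreasing nonnegative $f \in L^0(I)$, where either norm is finite precisely when the other is (equivalently, $f \in X \iff f \in TX$ on this cone).

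For $T = C$ this cone equivalence is elementary. First I would note that for nonincreasing nonnegative $f$ one has $f(x) \le \frac{1}{x}\int_0^x f(t)\,\mathrm{d}t = Cf(x)$ pointwise, so the ideal property gives $\norm{f}_X \le \norm{Cf}_X = \norm{f}_{CX}$; in particular $f \in CX \Rightarrow f \in X$. The reverse estimate $\norm{f}_{CX} = \norm{Cf}_X \le \norm{C}_{X \to X}\norm{f}_X$ is just boundedness of $C$ on $X$ (exactly the hypothesis), which also yields $f \in X \Rightarrow f \in CX$. Thus the upper bound holds with constant $1$ and the lower bound with constant $\norm{C}_{X\to X}^{-1}$.

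For $T = C^*$ I would reduce to the Ces\`aro case via the identities recalled just before the lemma. On $I = [0,\infty)$, since $C$ and $C^*$ are both bounded on $X$, equation (\ref{111}) gives $C^*X = CX$ with equivalent norms, so the cone estimate for $C^*X$ follows at once from the one proved for $CX$. On $I = [0,1]$ the relevant identity is (\ref{1111}), i.e. $C^*X = CX \cap L^1[0,1]$ with equivalent norms, whose hypotheses $L^\infty[0,1] \hookrightarrow X \hookrightarrow L^1[0,1]$ hold automatically for symmetric $X$. For a nonincreasing nonnegative $f$ the intersection norm is $\approx \max\{\norm{f}_{CX}, \norm{f}_{L^1}\}$; here $\norm{f}_{L^1} \lesssim \norm{f}_X \le \norm{f}_{CX}$ (the first inequality being the embedding $X \hookrightarrow L^1[0,1]$, the second the cone estimate for $C$), so the $L^1$-term is absorbed and $\norm{f}_{C^*X} \approx \norm{f}_{CX} \approx \norm{f}_X$; membership transfers the same way, since $f \in X$ forces $f \in CX$ and $f \in L^1[0,1]$, hence $f \in C^*X$, and conversely. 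This is the step I expect to be the most delicate, since it is where one must invoke the two genuinely different $[0,1]$ and $[0,\infty)$ identities and verify that the extra $L^1$-component present on $[0,1]$ does not spoil the two-sided control.

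Finally I would assemble the pieces. Applying the cone equivalence to both $X$ and $Y$ and using $TX = TY$ (same elements, equivalent norms), for arbitrary $g \in L^0(I)$ its decreasing rearrangement $g^*$ is nonincreasing and nonnegative, whence
$$ \norm{g}_X = \norm{g^*}_X \approx \norm{g^*}_{TX} = \norm{g^*}_{TY} \approx \norm{g^*}_Y = \norm{g}_Y, $$
while $g \in X \iff g^* \in X \iff g^* \in TX \iff g^* \in TY \iff g^* \in Y \iff g \in Y$. Thus $X$ and $Y$ consist of the same elements with equivalent norms, that is $X = Y$, which completes the converse and the proof.
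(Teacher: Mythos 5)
Your proof is correct and follows essentially the same route as the paper's: both arguments rest on the pointwise inequality $f^* \leq C(f^*)$ for nonincreasing rearrangements together with the boundedness of $C$ and symmetry, and both handle $T = C^*$ by reducing to the Ces\`aro case via $C^*X = CX$ on $[0,\infty)$ and $C^*X = CX \cap L^1[0,1]$ on $[0,1]$. The only difference is one of packaging: you isolate a two-sided norm equivalence on the cone of nonincreasing nonnegative functions and apply it symmetrically to $X$ and $Y$, whereas the paper chains the same inequalities directly to deduce $X \hookrightarrow Y$ from $TX \hookrightarrow TY$ (using $X \hookrightarrow C^*X$ rather than your $L^1$-absorption in the $[0,1]$ Copson case) and then swaps the roles of $X$ and $Y$.
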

%%%%%%%%%%%%%%
\begin{proof}
If $X = Y$, then $X \hookrightarrow Y$ and $Y \hookrightarrow X$, so $TX \hookrightarrow TY$ and 
$TY \hookrightarrow TX$. Thus, $TX = TY$.
	
Suppose that $T = C$. If $CX \hookrightarrow CY$, then thanks to boundedness of the Ces\`aro operator 
and the symmetry of $X$, we have
$$ 
\norm{f}_Y = \norm{f^*}_Y \leq \norm{C(f^*)}_Y \lesssim \norm{C(f^*)}_X 
\leq \norm{C}_{X \rightarrow X} \norm{f^*}_X \lesssim \norm{f}_X,
$$
that is, $X \hookrightarrow Y$. Changing the roles of $X$ and $Y$ we can show the reverse embedding 
$Y \hookrightarrow X$.
		
Let $T = C^*$ and assume that $C^*X \hookrightarrow C^*Y$. If $I = [0,\infty)$, then $C^*X = CX$ 
and there is nothing to prove, see the equality (\ref{111}). On the other hand, if $I = [0,1]$, then 
$C^*X = CX \cap L^1$, see the equality (\ref{1111}). Since and $X \hookrightarrow C^*X$, it follows 
that
$$
X \hookrightarrow C^*X \hookrightarrow C^*Y = CY \cap L^1 \hookrightarrow CY.
$$
Moreover, $\norm{f}_Y = \norm{f^{*}}_Y \leq \norm{C(f^*)}_Y$ and consequently
$$
\norm{f}_Y \leq \norm{C(f^*)}_Y = \norm{f^*}_{CY} \lesssim \norm{f^*}_X = \norm{f}_X,
$$
that is, $X \hookrightarrow Y$. 
\end{proof}

If we try to reformulate Lemma \ref{X = Y, to CX = CY} using isomorphism instead of the ``equalities", 
then this result is not longer true. For example, if $X = L^\infty$ and $Y = L^\infty(t)$, then 
$CY \equiv L^1$ and $X \simeq Y$ but of course $L^1$ is not isomorphic to $Ces_\infty$ (because 
$L^1$ is separable and $Ces_\infty$ is not). On the other hand, if $X = L^1[0,1]$ and 
$Y = L^\infty(t)[0,1]$, then $CX \simeq L^1[0,1]$, $CY \equiv L^1[0,1]$ and $CX \simeq CY$ but 
$X$ is not isomorphic to $Y$.

Generally, some isomorphic as well as isometric properties inherit well from $X$ to the Ces\`aro space 
$CX$ (for example, order continuity \cite{LM15p}, Fatou property \cite[Theorem 1 (d)]{LM15a} and 
rotundity \cite{Ki-Kol-rot}). However, there are properties, like reflexivity, which Ces\`aro 
function spaces never have (cf. Corollary \ref{CX-contains compl of l1}). In other words, certain properties never transfer 
from $X$ to $CX$. Below we present next two properties of this kind.

%%%%%%%%%%%%% Corr 6.4
\begin{corollary} \label{not dual and not RNP}
{\it Let $T = C$ or $T = C^*$. Suppose that $X$ is an order continuous Banach 
function space on $I$ with $TX\neq \{0\}$. Then
\begin{enumerate}
\item [(i)] $TX$ is not isomorphic to a dual space.
\item [(ii)] $TX$ does not have the Radon--Nikodym property.
\end{enumerate}
	}
\end{corollary}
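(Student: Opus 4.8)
The plan is to prove (ii) first and then deduce (i) from it, since the failure of the Radon--Nikodym property is the real engine and the hypothesis that $X$ is order continuous will be used only in the passage from (ii) to (i). A preliminary remark worth stressing: the asymptotically isometric copies of $\ell^1$ produced in Corollary \ref{CX-contains compl of l1} are of no use for either statement, because $\ell^1$ itself is a (separable) dual space and does have the Radon--Nikodym property. What does the work is the complemented copy of $L^1[0,1]$ coming from Lemma \ref{complementedL1} and Lemma \ref{complemented L1 in C*X}.

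For (ii) I would not invoke order continuity at all. By Lemma \ref{complementedL1} (in the case $T = C$) or Lemma \ref{complemented L1 in C*X} (in the case $T = C^*$), the mere assumption $TX \neq \{0\}$ already guarantees that $TX$ contains a closed subspace isomorphic to $L^1[0,1]$ (in fact a complemented one, though we only need it to be closed). Since $L^1[0,1]$ fails the Radon--Nikodym property and since that property is inherited by closed subspaces, the space $TX$ cannot have it either. This is precisely the Talagrand-type ingredient announced in the introduction, and it settles (ii).

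For (i) I would argue by contradiction, using the classical fact (in the circle of ideas of Bessaga--Pe\l czy\'nski) that every \emph{separable} dual Banach space has the Radon--Nikodym property. The first step is to show that $TX$ is separable, and this is where the order continuity of $X$ enters. For $T = C$ it is immediate: $X \in (OC)$ forces $CX \in (OC)$ by \cite[Lemma 1 (a)]{LM15p}, and an order continuous Banach function space over Lebesgue measure on $I$ (which carries a weak unit) is separable, since its integrable simple functions are dense. Granting separability, suppose $TX \simeq W^{\ast}$ for some Banach space $W$. Then $W^{\ast}$ is separable, being isomorphic to the separable space $TX$; hence $W^{\ast}$ is a separable dual space and therefore enjoys the Radon--Nikodym property. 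As that property is an isomorphic invariant, $TX$ would have it too, contradicting (ii). Thus $TX$ is not isomorphic to any dual space.

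The main obstacle is the separability (equivalently, the order continuity) of $TX$ in the case $T = C^{\ast}$ under these weak hypotheses, since here $X$ is a general Banach function space and we assume only $C^{\ast}X \neq \{0\}$ rather than boundedness of $C^{\ast}$, so the clean equivalence of Corollary \ref{Copson a Cesaro} is not directly at our disposal and an analogous transfer $X \in (OC) \Rightarrow C^{\ast}X \in (OC)$ must be supplied. A more robust way to finish, which sidesteps separability altogether, is to replace the last step by a lattice-theoretic theorem in the spirit of Talagrand: an order continuous Banach lattice that is isomorphic to a dual space must have the Radon--Nikodym property. With such a statement available, (i) follows from (ii) as soon as one knows that $TX$ is an order continuous Banach lattice, and the only remaining technical point is again the order continuity of $TX$.
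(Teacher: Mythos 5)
Your main argument is correct, but it takes a genuinely different route from the paper's, and the logical order is reversed. The paper proves (i) first: it renorms $TX$ via Theorem \ref{general renorming} so as to contain an \emph{isometric} copy of $L^1[0,1]$, notes that $X \in (OC)$ implies $TX \in (OC)$ and hence that $TX$ is separable (by \cite[Theorem 5.5]{BS88}), and then invokes the Bessaga--Pe{\l}czy\'{n}ski theorem \cite{BesPel} that separable dual spaces have the Krein--Milman property, which is contradicted by the extreme-point-free unit ball of the copy of $L^1[0,1]$; item (ii) is then \emph{deduced from} (i) via Talagrand's theorem \cite[Corollary 5.4.21]{MN91}. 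You instead prove (ii) directly -- $TX$ contains a closed (indeed complemented) copy of $L^1[0,1]$ by Lemmas \ref{complementedL1} and \ref{complemented L1 in C*X}, and the Radon--Nikodym property is inherited by closed subspaces -- and then get (i) from the classical fact that separable dual spaces have the Radon--Nikodym property. This buys two things: your (ii) holds under the sole hypothesis $TX \neq \{0\}$, with no order continuity (strictly more than the paper states), and you avoid both the renorming theorem and the Krein--Milman/Talagrand machinery. Your acknowledged gap for $T = C^*$, namely $X \in (OC) \Rightarrow C^*X \in (OC)$ without boundedness of $C^*$, is real but is filled exactly as the paper remarks (``it follows from the definition''): if $f \in C^*X$ and $A_n \downarrow \emptyset$, then $0 \leq C^*\left(\abs{f}\chi_{A_n}\right) \leq C^*\abs{f} \in X$ and $C^*\left(\abs{f}\chi_{A_n}\right) \downarrow 0$ a.e.\ by dominated convergence (the function $t \mapsto \abs{f(t)}/t$ is integrable on $I \cap [x,\infty)$ for a.e.\ $x$, since $C^*\abs{f}(x) < \infty$ a.e.), so order continuity of $X$ gives $\norm{f\chi_{A_n}}_{C^*X} \rightarrow 0$.

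One genuine error: your proposed ``more robust way to finish'' rests on a false statement. It is \emph{not} true that an order continuous Banach lattice isomorphic to a dual space must have the Radon--Nikodym property: the space $M[0,1] = C[0,1]^{\ast}$ of Radon measures is an AL-space, hence has order continuous norm, and is a dual Banach space (even a dual Banach lattice), yet it fails the Radon--Nikodym property because it contains the absolutely continuous measures, a closed sublattice isometric to $L^1[0,1]$. So separability cannot be sidestepped -- it is an essential hypothesis in Talagrand's theorem, not a removable technicality -- and the passage through separability (via order continuity of $TX$) in your main route is unavoidable in this scheme. Keep the main argument and drop the fallback.
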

%%%%%%%%%%%%%%%
\begin{proof}
Part (i) of the proof is the same as in \cite[Theorem 3]{AM13b}. We will give the details for 
a sake of completeness.
	
(i) We argue by contradiction. Suppose that $TX$ is isomorphic to a dual space, i.e. there exist 
a Banach function space $Y$ with $(TX, \norm{\cdot}_{TX}) \simeq Y^*$. By Theorem \ref{general renorming} 
we can find an equivalent norm, say $\norm{\cdot}'$, on the space $TX$ such that $(TX, \norm{\cdot}')$ 
contains a closed subspace isometric to $L^{1}[0,1]$. Of course, $(TX, \norm{\cdot}') \simeq Y^*$. 
It follows from the definition that if $X \in (OC)$ then $TX \in (OC)$, cf. also \cite[Lemma 1 (a)]{LM15p}.
Thus, our assumptions show that $(TX, \norm{\cdot}') \in (OC)$. Now, we can apply the well known 
fact that a Banach function space $X$ over the measure $\mu$ is separable if and only if it is order 
continuous and the measure $\mu $ is separable \cite[Theorem 5.5]{BS88} to conclude that 
$(TX, \norm{\cdot}')$ is also separable. Applying the Bessaga--Pe\l {}czy\'{n}ski result, see \cite{BesPel}, 
it follows that $(TX, \norm{\cdot}')$ has the Krein--Milman property. Therefore, every closed bounded 
set in $(TX, \norm{\cdot}')$ is a closed convex hull of its extreme points. On the other hand, the closed 
unit ball in $L^{1}[0,1]$ has no extreme points. This contradiction ends the proof.
	
(ii) This case follows from the Talagrand theorem, see \cite[Corollary 5.4.21]{MN91}, which states that 
a separable Banach lattice is isomorphic to a dual Banach lattice if and only if it has the Radon--Nikodym 
property.
\end{proof}

The above Corollary has been proved for $Ces_{p}$-spaces in \cite[Theorem 3]{AM13b} and 
in \cite[Corollaries 5.1 and 5.5]{KK12} (using duality arguments). Moreover, part (i) of the above result 
for the Ces\`aro function spaces $CX$, where $X$ is an order continuous symmetric space such 
that the Ces\'aro operator is bounded on $X$, is included in \cite[Proposition 5.3]{ALM17}. 
Interestingly, it may happen that the Ces\`aro function space $CX$ for a nonseparable space $X$ 
is isomorphic to a dual space. It was proved in \cite[the equality (2.9) and Theorem 5.1]{ALM17} that 
\begin{equation*}
(\widetilde{\mathit{\ell ^{1}}})^{\ast }=(\widetilde{\mathit{\ell ^{1}}}
)^{\prime }=ces_{\infty }\simeq Ces_{\infty },
\end{equation*}
which means that $Ces_{\infty }$ is isomorphic to a dual space.

The question when a given property ``transfers'' also in the opposite direction, i.e. from $CX$ to $X$, 
imposes itself. However, as the next theorem will show, in the class of Banach function spaces the 
answer is basically always negative. Before we formulate this result we need the following definition.

Let $X$ be a Banach function space with the property $P$ (in short, $X \in (P)$). We will say that 
the property $P$ is {\it good for the Ces\`aro construction} if $P$ is invariant under equivalent 
renormings (that is, if $(X,\norm{\cdot}) \in (P)$ and $\norm{\cdot}'$ is an equivalent norm on  $X$, 
then also $(X,\norm{\cdot}') \in (P)$) and we can find two nontrivial symmetric spaces $X$ and 
$Y$ on $[0,1]$, which satisfy the following conditions:
\begin{enumerate}
\item [(G1)] $X \in (P)$ but $Y \notin (P)$,
\item [(G2)] $X \cap Y \notin (P)$,
\item [(G3)] $X|_A \in (P)$ for every $\emptyset \neq A \subset [0,1]$,
\item [(G4)] $CX \in (P)$.
\end{enumerate}
Similarly, we will say that the property $P$ is {\it good for the Copson construction} replacing 
the condition (G4) with ``$C^*X \in (P)$" in the above definition. Moreover, the property $P$ 
is {\it good} if is good for the Ces\`aro and Copson construction. The definitions given do not 
look particulary restrictive, however, we will give some examples of such properties.

%%%%%%%%%% Ex 6.5
\begin{example}
(a) Let us start with the fact that order continuity is good property. Take $P = OC$. First, note that 
if $X \in (OC)$, then $X|_A \in (OC)$ for every $\emptyset \neq A \subset [0,1]$. Moreover, if 
$X \in (OC)$, then $TX \in (OC)$, see \cite[Lemma 1 (a)]{LM15p} and Corollary \ref{Copson 
a Cesaro}. It remains to find two nontrivial symmetric spaces on $[0,1]$ with properties (G1) 
and (G2). For example, let $X = L^1[0,1]$ and $Y$ be any symmetric space on $[0,1]$ with 
$Y \neq Y_a$ or let $Y = L^\infty[0,1]$ and $X$ be any order continuous symmetric space 
on $[0,1]$. It is clear that in both cases $X \cap Y \notin (OC)$.	
\smallskip
	
(b) We will show that the Dunford--Pettis property (for short, $DPP$) is good property (see 
\cite[p. 115]{AK06} for the definiton and \cite{Di80} for related results). Kami\'nska--Masty\l {}o 
proved in \cite{KM00} that there are exactly two nonisomorphic symmetric spaces on $[0,1]$ 
with the Dunford--Pettis property, namely $L^1[0,1]$ and $L^\infty[0,1]$. Therefore, if 
$X = L^1[0,1]$ and $Y$ is a reflexive symmetric space on $[0,1]$, then $X \in (DPP)$, 
$Y \notin (DPP)$, $X \cap Y = Y \notin (DPP)$ and $CL^1[0,1] = Ces_1[0,1] \simeq L^1[0,1] \in (DPP)$.
Moreover, $C^*L^1[0,1] = Cop_1[0,1] \simeq L^1[0,1] \in (DPP)$. Finally, a complemented 
subspaces of spaces with the Dunford--Pettis property also have it, so the condition (G3) is 
satisfied in an obvious way.
\smallskip	
	
(c) Let $p \geq 1$ and suppose that $X$ is a Banach function space on $I$ which is $p$-concave 
with constant $L \geq 1$ (see \cite[pp. 45--46]{LT79} for the definition) and such that the Ces\`aro 
operator $C$ is bounded on $X$. First, we will show that then also the space $CX$ is $p$-concave 
with constant $L$. Recall, that the space $L^1(I)|_{[0,x]}$ for $0 < x \in I$ is 1-convex with constant 
1 and $p$-concave with constant 1, that is,
\begin{align*}
\left(\sum_{k=1}^n \norm{f_k}_{L^1(I)|_{[0,x]}}^p \right)^{1/p}
& = \left( \sum_{k=1}^n \left(\int_0^x \abs{f_k(t)} \mathrm{d}t \right)^{p} \right)^{1/p} \\
& \leq \int_0^x \left(\sum_{k=1}^n \abs{f_k(t)}^p \right)^{1/p} \mathrm{d}t 
= \norm{\left(\sum_{k=1}^n \abs{f_k}^p \right)^{1/p}}_{L^1(I)|_{[0,x]}},
\end{align*}
for every $0 < x \in I$, see \cite[Proposition 1.d.5]{LT79}, \cite[Theorem 4.3]{Ma03} and 
the second part of the proof in \cite[Theorem 4]{AM14}. Therefore, we see immediately that
\begin{align*}
\left(\sum_{k=1}^n \left(C\abs{f_k} \right)^p \right)^{1/p}
& = \left(\sum_{k=1}^n \left(\frac{1}{x}\int_0^x \abs{f_k(t)} \mathrm{d}t \right)^{1/p} \right)^{1/p} \\
& \leq \frac{1}{x} \int_0^x \left(\sum_{k=1}^n \abs{f_k(t)}^p \right)^{1/p} \mathrm{d}t
= C \left(\sum_{k=1}^n \abs{f_k}^p \right)^{1/p}.
\end{align*}
Using the above inequality and $p$-concavity of the space $X$, we have
\begin{align*}
\left(\sum_{k=1}^n \norm{f_k}_{CX}^p \right)^{1/p} & = \left(\sum_{k=1}^n \norm{C\abs{f_k}}_{X}^p \right)^{1/p} \\
& \leq L \norm{\left(\sum_{k=1}^n \left(C\abs{f_k} \right)^p \right)^{1/p}}_{X} \\
& \leq L \norm{ C \left(\sum_{k=1}^n \abs{f_k}^p \right)^{1/p}}_{X}
= L \norm{\left(\sum_{k=1}^n \abs{f_k}^p \right)^{1/p}}_{CX},
\end{align*}
for all $f_1, f_2, ..., f_n \in CX$. Consequently, also the space $CX$ is $p$-concave with the 
same constant as for $X$. Taking, for example, $X = L^p[0,1]$ for $1 < p < \infty$ and $Y = L^\infty[0,1]$ 
we conclude that $p$-concavity is good property for the Ces\`aro construction.
	
To show that $p$-concavity is a good property also for the Copson construction we will prove first 
that if $X$ and $Y$ are $p$-concave Banach function spaces on $I$, then the space $X \cap Y$ is 
$p$-concave as well. Let $Z = X \cap Y$ and $\norm{f}_Z = \max\{\norm{f}_X, \norm{f}_Y \}$. 
Take $f_1, f_2, ..., f_n \in Z$ and denote by $L_X, L_Y > 0$ the constants of $p$-concavity of $X$ 
and $Y$, respectively (cf. \cite{LT79}). We have
\begin{equation*}
\norm{\left( \sum_{k=1}^{n} \abs{f_k}^p \right)^{1/p}}_Z \geq \norm{\left( \sum_{k=1}^{n} \abs{f_k}^p \right)^{1/p}}_X
\geq \frac{1}{L_X} \left( \sum_{k=1}^n \norm{f_k}_X^p \right)^{1/p},
\end{equation*}
and
\begin{equation}
\norm{\left( \sum_{k=1}^{n} \abs{f_k}^p \right)^{1/p}}_Z \geq \norm{\left( \sum_{k=1}^{n} \abs{f_k}^p \right)^{1/p}}_Y
\geq \frac{1}{L_Y} \left( \sum_{k=1}^n \norm{f_k}_Y^p \right)^{1/p}.
\end{equation}
Therefore, setting $L = \max\{L_X, L_Y \}$, we have
\begin{equation*}
\norm{\left( \sum_{k=1}^{n} \abs{f_k}^p \right)^{1/p}}_Z
\geq \frac{1}{2L} \left( \left( \sum_{k=1}^n \norm{f_k}_X^p \right)^{1/p} + \left( \sum_{k=1}^n \norm{f_k}_Y^p \right)^{1/p} \right).
\end{equation*}
From the triangle inequality for the space $\ell^p$, we obtain immediately that
\begin{align*}
\norm{\left( \sum_{k=1}^{n} \abs{f_k}^p \right)^{1/p}}_Z
& \geq \frac{1}{2L} \left( \sum_{k=1}^n \left( \norm{f_k}_X + \norm{f}_Y \right)^p \right)^{1/p} \\
& \geq \frac{1}{2L} \left( \sum_{k=1}^n \left( \max\{\norm{f}_X, \norm{f}_Y \} \right)^p \right)^{1/p}
= \frac{1}{2L} \left( \sum_{k=1}^n \norm{f_k}_Z^p \right)^{1/p}.
\end{align*}
But this means that the space $Z$ is actually $p$-concave and the claim follows.
	
Now, it is clear that if $X$ is $p$-concave symmetric space such that both operators $C$ and $C^*$ 
are bounded on $X$, then the Copson space $C^{*}X$ is also $p$-concave. Indeed, suppose that 
$I = [0,1]$, because if $I = [0,\infty)$ there is  noting to prove, cf. the equality (\ref{111}). Since 
$L^1[0,1]$ is $p$-concave with constant 1, it follows from the first part, that the space $CX[0,1]$ 
is also $p$-concave. Therefore, the space $CX[0,1] \cap L^1[0,1]$ is $p$-concave as well.
But in view of the equality (\ref{1111}) this means that also the Copson space $C^*X[0,1]$ is 
$p$-concave.
\smallskip		
	
(d) Let $\Upsilon(X) := \{ p > 1 \colon \text{$X$ contain an isomorphic copy of $\ell^p$} \}$. Then
$$
\Upsilon(Ces_p[0,1]) = \Upsilon(L^p[0,1]) \cup \Upsilon(L^1[0,1]) \quad \text{for} \quad 1 \leq p < \infty,
$$
see \cite[Theorem 10]{AM09} and \cite[Theorem 5.5 and Fig. 1--2]{AM14}. Now, if we set $p > 1$ 
and define a property $P$ to means that the space $X$ contains an isomorphic copy of $\ell^p$, then 
$P$ is also good property for the Ces\`aro construction.
\end{example}

%%%%%%%%%%%% Thm 6.6
\begin{theorem} \label{transfer}
{\it Let $T = C$ or $T = C^*$. Suppose that $X$ is a Banach function space on $I$ such that $TX$ 
is nontrivial and let $P$ be a good property. The implication: if $TX \in (P)$ then $X \in (P)$, does 
not hold in general.}
\end{theorem}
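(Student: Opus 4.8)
The plan is to produce, for any good property $P$, a single Banach function space $W$ that serves as a universal counterexample: $TW \in (P)$ while $W \notin (P)$. The main tool will be Lemma~\ref{CX zjada srodek}, which says precisely that the construction $X \mapsto TX$ ``forgets'' what happens in the middle of the interval. By definition of a good property there exist nontrivial symmetric spaces $X$ and $Y$ on $[0,1]$ satisfying (G1)--(G4); for $T = C$ I would take the witnesses coming from goodness for the Ces\`aro construction and for $T = C^*$ those coming from goodness for the Copson construction, so that in either case (G4) reads $TX \in (P)$. Fixing $0 < a < b < 1$, I then set
\[ W := X|_{[0,a]} \oplus (X \cap Y)|_{[a,b]} \oplus X|_{[b,1]}. \]

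First I would verify that Lemma~\ref{CX zjada srodek} applies with the middle space $X \cap Y$ playing the role of the lemma's $Y$, and this is exactly where (G2) is exploited. Since $X$ and $Y$ are symmetric spaces on $[0,1]$ we have $L^\infty[0,1] \hookrightarrow X \cap Y$, while $X \cap Y \hookrightarrow X$ is trivial, so the hypothesis $L^\infty \hookrightarrow (X \cap Y) \hookrightarrow X$ holds irrespective of how $X$ and $Y$ compare (this is the reason for intersecting rather than simply using $Y$ as the middle). Lemma~\ref{CX zjada srodek} then gives $TW = TX$. Because $X$ is a nontrivial symmetric space on $[0,1]$, the space $TX$ is nontrivial by Theorem~D and Corollary~\ref{Copson bounded and embeddings}, so the standing hypothesis $TW \neq \{0\}$ is met. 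By (G4) we have $TX \in (P)$, and since $TW = TX$ as sets with equivalent norms and $P$ is invariant under equivalent renormings, it follows that $TW \in (P)$.

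It remains to show that $W \notin (P)$, and this is the step I expect to be the delicate one. The outer bands $X|_{[0,a]}$ and $X|_{[b,1]}$ lie in $(P)$ by (G3), whereas the middle band $(X \cap Y)|_{[a,b]}$ does not: by rearrangement invariance $(X \cap Y)|_{[a,b]}$ is an isometric copy of $X \cap Y$ realized on a subinterval, so the failure of $P$ guaranteed by (G2) is transported inside $(X \cap Y)|_{[a,b]}$. Now $(X \cap Y)|_{[a,b]}$ is a complemented band in $W$ via the bounded projection $f \mapsto f\chi_{[a,b]}$; hence, were $W$ in $(P)$, this complemented subspace would inherit $(P)$, a contradiction. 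Therefore $W \notin (P)$, and taking $W$ as the space in the statement shows that the implication ``$TW \in (P) \Rightarrow W \in (P)$'' fails.

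The crux of the argument is the inheritance invoked in the last step, namely that $(P)$ (equivalently, its failure) is not destroyed by passing to the complemented band $(X \cap Y)|_{[a,b]}$ inside the direct sum. This is precisely the mechanism already used to verify (G3) in the examples: for order continuity it is that $X_a$ is a band, for $p$-concavity it is that the defining lattice inequality restricts to closed sublattices, and for the Dunford--Pettis property it is stability under passing to complemented subspaces. I would therefore either record this stability as a standing feature of good properties or, more conservatively, verify it for each concrete property of interest, after which the construction above yields the desired counterexample uniformly in $T = C$ and $T = C^*$.
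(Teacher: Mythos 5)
Your proposal is correct and takes essentially the same route as the paper: the sandwich space built from Lemma~\ref{CX zjada srodek}, the identity $TW = TX$ combined with (G4) and invariance of $P$ under equivalent renormings, and the failure of $P$ detected on the middle band; your only deviation is to use $X \cap Y$ as the middle space uniformly, which harmlessly collapses the paper's two-case analysis (the paper takes the middle to be $Y$ when $Y \hookrightarrow X$ and $X \cap Y$ in the incomparable case, after first ruling out $X \hookrightarrow Y$ via (G2) --- but since $X \cap Y = Y$ up to equivalent norm in the first case and (G2) directly gives $X \cap Y \notin (P)$, your single construction subsumes both). The ``delicate'' hereditary step you flag --- transporting the failure of $P$ from $X \cap Y$ to the restricted band $(X \cap Y)|_{[a,b]}$ sitting complemented inside $W$ --- is equally present in the paper's proof, where it is dispatched by the terse appeal ``$Z|_{[a,b]} \equiv Y$ and $Y \notin (P)$, so $Z \notin (P)$ by (G3)''; your more cautious proposal to record this stability as a standing feature of good properties, or to verify it for each concrete $P$, is exactly how the paper's examples (order continuity, the Dunford--Pettis property, $p$-concavity) actually justify it, so your caveat identifies a genuine looseness shared by the original argument rather than a defect of your own.
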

%%%%%%%%%%%%%
\begin{proof}
We consider only the case when $P$ is a good property for the Ces\`aro construction, because the 
proof in the second case is the same.
	
Since $P$ is a good property, it follows that there are nontrivial symmetric spaces $X$ and $Y$ on $[0,1]$ 
such that $X \in (P)$ and $Y \notin (P)$ (by property (G1)). Note that we only have two possibilities either 
$Y \hookrightarrow X$ or $X \not\hookrightarrow Y$ and $Y \not\hookrightarrow X$. Indeed, if 
$X \hookrightarrow Y$, then $X \cap Y = X$ and $X \cap Y \notin (P)$ via property (G2) which is impossible. 
Therefore, we will consider two situations.
	
($Y \hookrightarrow X$). Observe, that all assumptions from Lemma \ref{CX zjada srodek} are fulfilled.
Choose $0 < a < b < 1$ and put $Z[0,1] := X[0,a]\oplus Y[a,b]\oplus X[b,1]$. Then $Z|_{[a,b]} \equiv Y$ 
and $Y \notin (P)$, so $Z \notin (P)$ by property (G3). On the other hand, using property (G4), we 
conclude that $CX \in (P)$. But it follows from equality (\ref{CX = CZ}) that $CZ = CX$, thus 
$CZ \neq \{ 0 \}$ and $CZ \in (P)$. In summary, we have shown that there exist a Banach function 
space $Z$ on $[0,1]$ with $Z \notin (P)$, $CZ \neq \{ 0 \}$ and $CZ \in (P)$.
	
($X \not\hookrightarrow Y$ {\it and} $Y \not\hookrightarrow X$). Let us note that
$$
L^\infty \hookrightarrow X \cap Y \hookrightarrow X,
$$
where the first inclusion follows from \cite[Theorem 6.6, p. 77]{BS88} (clearly, $X \cap Y$ is a symmetric 
space). Consequently, $X \cap Y \neq \{ 0 \}$ and $X \cap Y \notin (P)$ by property (G2). 
Moreover, $X \cap Y \hookrightarrow X$ and so the assumptions of Lemma \ref{CX zjada srodek} 
are satisfied also in this case for the spaces $X \cap Y$ and $X$. Now we can continue like in 
the previous case but instead of the space $Y$ we take $X \cap Y$.
\end{proof}

It seems interesting that when we restrict the class of spaces under consideration to class of 
symmetric spaces, then it may happen that the above theorem is not true. For example, it was 
proved by Kiwerski--Tomaszewski \cite[Theorem 3]{KT17} that a space $X$ is order continuous 
if and only if $CX$ is also order continuous, whenever $X$ is a symmetric space such that the 
Ces\`aro operator is bounded on $X$ (see also Corollary \ref{Copson a Cesaro}).
Moreover, if $I = [0,\infty)$ and we allow the situtation that $CX = \{ 0 \}$, then it is easy to see that
$C(L^1 \cap L^\infty) = \{ 0 \} \in (OC)$ but $L^1 \cap L^\infty \notin (OC)$.

The results in the next section show that also fixed point properties do not transfer from $X$ into 
the Ces\`aro or Copson function spaces. 

%%%%%%%%%%%%%%%%% Sec 7
\section{Applications to the metric fixed point theory} \label{sekcja FPP}
%%%%%%%%%%%%%%%%%%%

A Banach space $X = (X, \norm{\cdot}_X)$ has the \textit{fixed point property} ($X \in (FPP)$ for short)
if every nonexpansive mapping $T \colon K \rightarrow K$, that is, the mapping satisfying
$$
\norm{T(x) - T(y)} \leq \norm{x-y} \quad \text{for all} \quad x,y \in K,
$$
on every nonempty, closed, bounded and convex subset $K$ of $X$, has a fixed point, i.e., there 
exist a point $x_0 \in K$ such that $T(x_0) = x_0$. If the same holds for every nonempty, weakly 
compact and convex subset $K$ of $X$, we say that this space has the \textit{weak fixed point 
property} (we write $X \in (wFPP)$). Of course, if the space $X$ has the fixed point property,
then $X$ has the weak fixed point property and both properties are equivalent in the class of 
reflexive spaces. The spaces $c_0$, $\ell^1$, $L^1[0,1]$, $L^\infty[0,1]$, $L^{p,1}[0,\infty)$ 
and $C[0,1]$ fail the fixed point property and the spaces $\ell^\infty$, $c_0(\Gamma)$ and 
$\ell^1(\Gamma)$, for $\Gamma$ uncountable, cannot be even renormed to have the fixed 
point property, see Theorem 2, Corollary 3 and remark after Proposition 7 in \cite{DLT96}. 
However, $c_0$ and $\ell^1$ have the weak fixed point property but $L^1[0,1] \notin (wFPP)$, 
as it was proved by Alspach \cite{Al81}.

We are now ready to prove the main result of this section.

%%%%%%%%%%%%%% Thm 7.2
\begin{theorem} \label{FPPCesaro} \label{main theorem}
{\it Let $T = C$ or $T = C^*$. If $X$ is a Banach function space on $I$ such that 
$TX \neq \{ 0 \}$, then the space $TX$ fails to have the fixed point property. 
Moreover,
\begin{enumerate}
\item[(i)] the space $\left( TX\right) ^{\ast }$ cannot be renormed to have
the fixed point property,

\item[(ii)] the space $\left( TX\right) ^{\ast }$ fails to have the weak
fixed point property.
\end{enumerate}
}
\end{theorem}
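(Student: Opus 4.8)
The plan is to reduce the entire statement to the two structural theorems just established, namely that $TX$ contains an order asymptotically isometric copy of $\ell^1$ (Theorem \ref{AIC-l1} if $T = C$, Theorem \ref{AIC-l1 in C^*X} if $T = C^*$), together with the standard machinery surrounding asymptotically isometric copies of $\ell^1$. For the unlabelled main assertion I would argue directly: since $TX \neq \{0\}$, one of those theorems gives an order asymptotically isometric copy of $\ell^1$ in $TX$, which in particular is an asymptotically isometric copy of $\ell^1$ (forgetting disjointness). By the Dowling--Lennard theorem \cite{DL97}, any Banach space containing an asymptotically isometric copy of $\ell^1$ fails $(FPP)$, so $TX \notin (FPP)$. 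This disposes of the first claim.

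For the duality statements the engine is the Dilworth--Girardi--Hagler theorem \cite[Theorem 2]{DGH00}, already invoked earlier in this section: because $TX$ contains an asymptotically isometric copy of $\ell^1$, its Banach dual $(TX)^*$ contains an isometric copy of $L^1[0,1]$ \emph{and} an isometric copy of $C[0,1]^*$. What remains is a soft hereditary argument. I would record once and for all that $(FPP)$ passes to closed subspaces (a closed bounded convex subset of a subspace is one in the whole space), and that $(wFPP)$ does as well: for a closed subspace $Y \subset Z$ the weak topology of $Y$ is the restriction of the weak topology of $Z$, so a weakly compact convex subset of $Y$ is weakly compact and convex in $Z$. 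Consequently, if a closed subspace fails $(FPP)$ (resp. $(wFPP)$), then so does the ambient space, and any fixed equivalent norm on the ambient space restricts to an equivalent norm on the subspace.

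For part (i) I would use the copy of $C[0,1]^*$: the Dirac measures $\{\delta_t : t \in [0,1]\}$ span an isometric copy of $\ell^1(\Gamma)$ with $\Gamma = [0,1]$ uncountable inside $C[0,1]^*$, and $\ell^1(\Gamma)$ for uncountable $\Gamma$ cannot be renormed to have $(FPP)$ \cite{DLT96}. Thus $(TX)^*$ contains, isometrically, a subspace that cannot be renormed to have $(FPP)$; by the hereditary observation, no equivalent norm on $(TX)^*$ can produce $(FPP)$, which is exactly (i). For part (ii) I would instead use the copy of $L^1[0,1]$: by Alspach's example \cite{Al81} the space $L^1[0,1]$ fails $(wFPP)$, hence $(TX)^*$ contains a closed subspace failing $(wFPP)$ and therefore itself fails $(wFPP)$.

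I expect the only genuinely delicate point to be the bookkeeping of these hereditary passages --- in particular verifying that weak compactness is intrinsic to a closed subspace, so that both $(wFPP)$ and the property ``cannot be renormed to have $(FPP)$'' descend to and ascend from closed subspaces in the direction needed. All the heavy analytic content is packaged inside Theorems \ref{AIC-l1} and \ref{AIC-l1 in C^*X} and the cited theorems of Dowling--Lennard, Dilworth--Girardi--Hagler, and Dowling--Lennard--Turett, so beyond correctly assembling these ingredients I anticipate no serious obstacle.
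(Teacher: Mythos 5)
Your treatment of the unlabelled main claim and of part (ii) coincides with the paper's proof: the asymptotically isometric copies of $\ell^1$ come from Theorems \ref{AIC-l1} and \ref{AIC-l1 in C^*X}, the Dowling--Lennard theorem \cite{DL97} kills $(FPP)$ for $TX$, and the Dilworth--Girardi--Hagler theorem \cite[Theorem 2]{DGH00} combined with Alspach's example \cite{Al81} kills $(wFPP)$ for $(TX)^*$; the hereditary facts you spell out (that $(FPP)$ and $(wFPP)$ pass to closed subspaces, since closed bounded convex sets, respectively weakly compact convex sets, of a closed subspace are such in the ambient space) are left implicit in the paper but are precisely what makes these citations apply, so making them explicit is harmless. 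Where you genuinely diverge is part (i). The paper applies \cite[Corollary 4]{DLT96} --- if a Banach space contains a complemented copy of $\ell^1$, then its dual cannot be renormed to have $(FPP)$ --- to the complemented almost isometric copy of $\ell^1$ in $TX$ supplied by Corollary \ref{CX-contains compl of l1}, so all the work happens on the predual side. You instead stay entirely in $(TX)^*$: from the Dilworth--Girardi--Hagler copy of $C[0,1]^*$ you extract the isometric $\ell^1(\Gamma)$, $\Gamma = [0,1]$, spanned by the Dirac measures, invoke the \cite{DLT96} fact (quoted in the introduction to Section 7 of the paper) that $\ell^1(\Gamma)$ with $\Gamma$ uncountable admits no renorming with $(FPP)$, and push this up through your hereditary observation, which is valid since any equivalent norm on $(TX)^*$ restricts to an equivalent norm on the closed subspace. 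Both routes are correct and draw on the same circle of ideas from \cite{DLT96} (Corollary 4 there is itself proved through $\ell^1(\Gamma)$ sitting in the dual); the paper's version is shorter given Corollary \ref{CX-contains compl of l1} as a black box, whereas yours needs only the asymptotically isometric copy of $\ell^1$ --- no complementation --- at the price of using the $C[0,1]^*$ half of \cite[Theorem 2]{DGH00} (which the paper does invoke, but only in the Tandori-space corollary) plus the Dirac-measure computation. You were also right to route (i) through $C[0,1]^*$ rather than $L^1[0,1]$, since the latter is separable and so contains no uncountable $\ell^1(\Gamma)$.
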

%%%%%%%%%%%%%%
\begin{proof}
The first claim that $CX\notin (FPP)$ follows immediately from Theorem \ref{AIC-l1} and the
Dowling--Lennard result from \cite{DL97}, which states that a Banach space which contains an
asymptotically isometric copy of $\ell ^{1}$ fails to have the fixed point property (see also 
\cite[Theorem 2.3 and Corollary 2.11]{DLT01}). For the Copson space $C^*X$ we apply 
Theorem \ref{AIC-l1 in C^*X}, respectively.

(i) It is known that if a\ Banach space $X$ contains complemented copy of $\ell^{1}$, then 
$X^{\ast }$ cannot be renormed to have the fixed point property (see \cite[Corollary 4]{DLT96}). 
Thus we should apply only Corollary \ref{CX-contains compl of l1}.

(ii) Recall the Dilworth--Girardi--Hagler result \cite[Theorem 2]{DGH00} which states that 
a Banach space $X$ contains an asymptotically isometric copy of $\ell ^{1}$ if and only if 
the dual space $X^{\ast }$ contains an isometric copy of $L^{1}[0,1].$ Combining 
Theorem \ref{as kopia l1} with the Dilworth--Girardi--Hagler result we obtain that the 
space $\left( CX\right)^{\ast }$ contains an isometric copy of $L^{1}[0,1]$. In view of Alspach
result from \cite{Al81} this means that $\left( CX\right) ^{\ast }\notin (wFPP)$. Again, in the 
case of the Copson space $C^*X$ we simply use Theorem \ref{AIC-l1 in C^*X}.
\end{proof}

By the Alspach result \cite{Al81} and our Theorem \ref{general renorming} we easily obtain 
the following corollary.

%%%%%%%%%%%%% Corr 7.2
\begin{corollary}
{\it Let $T = C$ or $T = C^*$. Assume that $X$ is a Banach function space on $I$ with 
$TX\neq \{0\}$. Then there is an equivalent norm on the space $TX$ for which $TX$ 
fails the weak fixed point property.}
\end{corollary}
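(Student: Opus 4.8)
The plan is to combine the renorming result of Theorem \ref{general renorming} with Alspach's theorem in the most direct way possible, so that the whole argument reduces to transporting a fixed-point-free map through a linear isometry. First I would invoke Theorem \ref{general renorming} (i): since $TX \neq \{0\}$, there is an equivalent norm $\vertiii{\cdot}_{TX}$ on $TX$ for which $(TX, \vertiii{\cdot}_{TX})$ contains a closed subspace $Y$ that is isometric to $L^1[0,1]$. Write $J \colon L^1[0,1] \to Y$ for the corresponding surjective linear isometry, so that $\vertiii{Jg}_{TX} = \norm{g}_{L^1[0,1]}$ for every $g \in L^1[0,1]$.

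Next I would recall the content of Alspach's result \cite{Al81}: there exist a weakly compact convex subset $K_0 \subset L^1[0,1]$ and a nonexpansive mapping $f_0 \colon K_0 \to K_0$ without a fixed point. Transporting these objects through $J$, the set $K := J(K_0)$ is convex, and the mapping $f := J \circ f_0 \circ J^{-1} \colon K \to K$ is nonexpansive for $\vertiii{\cdot}_{TX}$ restricted to $Y$ and is still fixed-point free, since a fixed point of $f$ would produce, via $J^{-1}$, a fixed point of $f_0$.

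The only genuine point to verify is that this failure of the fixed point property survives the passage from $Y$ to the whole space $(TX, \vertiii{\cdot}_{TX})$. Here I would use that weak compactness is an intrinsic property of $K$: because $Y$ is a closed subspace, the Hahn--Banach theorem guarantees that every functional in $Y^*$ is the restriction of some functional in $(TX, \vertiii{\cdot}_{TX})^*$, so the weak topology of $Y$ coincides with the topology induced on $Y$ by the weak topology of the ambient space. Consequently $K$, being weakly compact in $Y$, is also weakly compact (and of course convex) when regarded as a subset of $(TX, \vertiii{\cdot}_{TX})$, and $f \colon K \to K$ is a nonexpansive self-map of $K$ for the ambient norm with no fixed point. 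Therefore $(TX, \vertiii{\cdot}_{TX})$ fails the weak fixed point property, which is exactly the asserted conclusion.

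I do not anticipate any real obstacle in carrying this out: the substantive input (the renorming and Alspach's example) is already available, and the remaining work is the transfer through the isometry $J$ together with the elementary observation that weak compactness of a set contained in a closed subspace does not depend on whether it is computed in the subspace or in the ambient space. If one prefers, the same conclusion can be reached by noting that $(TX, \vertiii{\cdot}_{TX})$ contains an isometric copy of $L^1[0,1]$ and invoking directly the fact that a Banach space containing an isometric copy of a space failing the weak fixed point property must itself fail it.
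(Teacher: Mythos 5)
Your proposal is correct and takes essentially the same route as the paper, which obtains this corollary precisely by combining Theorem \ref{general renorming} (the renorming of $TX$ so that it contains an isometric copy of $L^1[0,1]$) with Alspach's result \cite{Al81}. The details you supply --- transporting Alspach's weakly compact convex set and fixed-point-free nonexpansive map through the isometry, and observing via Hahn--Banach that weak compactness of a set lying in a closed subspace is the same computed in the subspace or in the ambient space --- are exactly the routine verifications the paper leaves implicit.
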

%%%%%%%%%%%

In the next remark we collect some known results concerning the (weak) fixed point property
and copies of $\ell^\infty$.

%%%%% Remark 7.3
\begin{remark}\label{izometryczna kopia ell-nieskonczonosc a FPP}
{\it Let $X$ be a Banach space.
\begin{enumerate}
\item [(i)] If $X$ contains an isomorphic copy of $\ell^\infty$, then $X$ cannot be renormed 
to have the fixed point property,
\item [(ii)] If $X$ contains an isometric copy of $\ell^\infty$, then $X$ fails to have the weak 
fixed point property.
\end{enumerate}
}
\end{remark}
%%%%%%%%%%
\begin{proof}
(i) It follows from Pe\l czy\'{n}ski result \cite{Pe68} that a separable Banach space $X$ contains 
an isomorphic copy of $\ell^1$ if and only if $X^*$ contains an isomorphic copy of 
$\ell^1(\Gamma)$ for some uncountable set $\Gamma$. In particular, $(\ell^1)^* = \ell^\infty$ 
and consequently $\ell^\infty$ contains an isomorphic copy of $\ell^1(\Gamma)$. Moreover, 
by Dowling--Lennard--Turett result \cite[Theorem 1]{DLT96} any renorming of the space 
$\ell^1(\Gamma)$ contains an asymptotically isometric copy of $\ell^1$. But a Banach space 
which contains an asymptotically isometric copy of $\ell^1$ fails the fixed point property \cite{DLT01}. 
Therefore, the space $X$ fails the fixed point property as well.
	
(ii) A classical result is that $\ell^\infty$ is the universal space for all separable Banach spaces, 
i.e., every separable Banach space $X$ can be isometrically embedded into $\ell^\infty$ (just 
take a dense subset $\{x_n \colon n \in \mathbb{N}\} \subset S(X)$ with $x_n^*x_n = 1$ for all 
$n \in \mathbb{N}$, where $\{ x_n^* \colon n \in \mathbb{N} \} \subset S(X^*)$, and put 
$T \colon X \ni x \mapsto (x_n^* x)_{n=1}^\infty \in \ell^\infty$). Therefore, in particular, $\ell^\infty$ 
contains an isometric copy of $L^1[0,1]$. Again, by the Alspach result \cite{Al81}, $X \notin (wFPP)$.
\end{proof}

Lozanovski{\u \i} proved in \cite{Lo69} that a Banach function space $X$ is order continuous if and only if it contains no isomorphic 
copy of $\ell^\infty$. Moreover, if $X$ is a symmetric space and $C$ is bounded 
on $X$, then the  space $CX$ is order continuous if and only if $X$ is also (see \cite{KT17}). 
Consequently, by the above Remark \ref{izometryczna kopia ell-nieskonczonosc a FPP} and 
Corollary \ref{Copson a Cesaro}, we have the following

%%%%%%%%%%% Corr 7.4
\begin{corollary} \label{TX bez przenormowania do FPP}
{\it Let $T = C$ or $T = C^*$. If $X$ is a symmetric space on $I$ such that $C$ and $C^*$ are bounded 
on $X$ and $X$ is not order continuous, then $TX$ cannot be renormed to have the fixed point property}.
\end{corollary}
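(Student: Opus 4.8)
The plan is to reduce the statement to the non-order-continuity of $TX$ together with the renorming-invariant obstruction to the fixed point property coming from a copy of $\ell^\infty$. First I would observe that the conclusion follows at once from Remark \ref{izometryczna kopia ell-nieskonczonosc a FPP} (i), provided we can show that $TX$ contains an isomorphic copy of $\ell^\infty$: that remark guarantees that no equivalent renorming of a space containing such a copy can enjoy the fixed point property. Thus the entire task is to produce an isomorphic copy of $\ell^\infty$ inside $TX$.

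The heart of the argument is therefore to transfer the failure of order continuity from $X$ to $TX$. Since $X$ is assumed not to be order continuous and both $C$ and $C^*$ are bounded on $X$, I would split into the two cases. If $T = C$, then by the Kiwerski--Tomaszewski transfer theorem \cite[Theorem 3]{KT17} the space $CX$ is order continuous if and only if $X$ is; hence $CX \notin (OC)$. If $T = C^*$, then Corollary \ref{Copson a Cesaro} (which requires precisely the boundedness of both $C$ and $C^*$) yields that $C^*X$ is order continuous if and only if $X$ is, so again $C^*X \notin (OC)$. In either case $TX$ fails to be order continuous.

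It then remains to invoke Lozanovski{\u \i}'s characterization \cite{Lo69}: a Banach function space is order continuous if and only if it contains no isomorphic copy of $\ell^\infty$. Applying this to $TX$, the fact that $TX \notin (OC)$ forces $TX$ to contain an isomorphic copy of $\ell^\infty$, and the proof is completed by Remark \ref{izometryczna kopia ell-nieskonczonosc a FPP} (i).

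I do not expect a genuine obstacle here, since all the required machinery has been assembled in the preceding results; the only point demanding care is matching the hypotheses to the correct transfer theorem (boundedness of $C$ alone suffices for $T = C$, whereas the Copson case genuinely uses boundedness of both operators through Corollary \ref{Copson a Cesaro}). The conceptually essential ingredient is Lozanovski{\u \i}'s equivalence, which converts the analytic statement ``$TX \notin (OC)$'' into the geometric statement that $TX$ contains $\ell^\infty$ isomorphically, and this feeds directly into the renorming-invariant failure of the fixed point property.
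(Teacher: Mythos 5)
Your proposal is correct and follows essentially the same route as the paper: the paper likewise combines the transfer of (non-)order continuity from $X$ to $TX$ (via \cite[Theorem 3]{KT17} for $T=C$ and Corollary \ref{Copson a Cesaro} for $T=C^*$) with Lozanovski{\u \i}'s characterization from \cite{Lo69} and Remark \ref{izometryczna kopia ell-nieskonczonosc a FPP} (i). Your side remark that boundedness of $C$ alone suffices in the Ces\`aro case, while the Copson case genuinely needs both operators, is also consistent with how the paper assembles these ingredients.
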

%%%%%%%%%%%%%%

%%%%%%%%%%%% Prop 7.5
\begin{proposition} \label{Proposition wFPP}
{\it The spaces $Ces_{1}[0,1]$, $Ces_\infty$, $Cop_1$, $Cop_\infty$ and nontrivial Tandori function 
spaces $\widetilde{\mathit{X}}$ fail to have the weak fixed point property.}
\end{proposition}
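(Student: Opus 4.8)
The plan is to split the five families into two groups, according to whether the space is ``close to $\ell^\infty$'' or ``close to $L^1$'', and to reduce each group to a result already established in the excerpt. The spaces $Ces_\infty$ and the nontrivial Tandori spaces $\widetilde{\mathit{X}}$ belong to the first group. For them I would simply invoke the two preceding propositions: by Proposition \ref{Ces_infty isometric copy of l8} the space $Ces_\infty$ contains an order isomorphically isometric copy of $\ell^\infty$, and by Proposition \ref{isometric l^infty in Tandori} so does every nontrivial $\widetilde{\mathit{X}}$. Hence part (ii) of Remark \ref{izometryczna kopia ell-nieskonczonosc a FPP} applies verbatim and yields $Ces_\infty \notin (wFPP)$ and $\widetilde{\mathit{X}} \notin (wFPP)$. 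This part needs no new computation.

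The remaining three spaces $Ces_1[0,1]$, $Cop_1$ and $Cop_\infty$ form the second group, and for them I would exploit the identifications recorded in Example \ref{Cesaro i Copson dla L1 i L8}: equality (\ref{Ces1 = L1ln(1/t)}) gives $Ces_1[0,1] \equiv L^1(\ln(1/t))[0,1]$, while Example \ref{Cesaro i Copson dla L1 i L8}(c) gives $Cop_1 \equiv L^1$ and $Cop_\infty \equiv L^1(1/t)$. In every case the norm has the form $\int_I \abs{f(t)}\, w(t)\,\mathrm{d}t$ for a strictly positive weight $w$, so the space is isometrically the $L^1$-space of the nonatomic (finite or $\sigma$-finite) measure $\mathrm{d}\mu = w\,\mathrm{d}m$. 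I would then pass from such a weighted $L^1$ to $L^1[0,1]$ by a change of variables: writing $\phi(t) := \mu([0,t])$ for the continuous, strictly increasing distribution function of $\mu$, the substitution $f \mapsto f\circ\phi^{-1}$ is a lattice isometry of $L^1(\mu)$ onto $L^1$ of Lebesgue measure on the interval $\phi(I)$, since $\phi'(t)=w(t)$. When $\mu$ is finite (as for $Ces_1[0,1]$ and $Cop_1[0,1]$) this is all of $L^1[0,1]$, and when $\mu$ is infinite (as for $Cop_\infty$ and the spaces on $[0,\infty)$) one still obtains a lattice-isometric copy of $L^1[0,1]$ by restricting to a suitable subinterval. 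Thus each of these three spaces contains an isometric copy of $L^1[0,1]$.

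To conclude, I would invoke the Alspach result \cite{Al81}, which states that $L^1[0,1]$ fails the weak fixed point property, and transfer this failure to the ambient space: the weakly compact convex set and the nonexpansive self-map witnessing Alspach's counterexample live inside the isometric copy of $L^1[0,1]$ and remain a valid counterexample in the larger space, because a closed subspace inherits the restriction of both the norm and the weak topology. Consequently $Ces_1[0,1]$, $Cop_1$ and $Cop_\infty$ all fail $(wFPP)$, which together with the first group completes the proof.

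The only genuinely delicate point is this last transfer step, namely confirming that an isometric copy of $L^1[0,1]$ inside a larger Banach space really forces the failure of $(wFPP)$. I expect this to be the main (and essentially the only) obstacle, but it is precisely the mechanism already used in the proof of Remark \ref{izometryczna kopia ell-nieskonczonosc a FPP}(ii), so in practice I would just cite it. Everything else is a routine matching of norms from Example \ref{Cesaro i Copson dla L1 i L8} together with the standard change-of-variables identification of a nonatomic weighted $L^1$ with $L^1$ over an interval.
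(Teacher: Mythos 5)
Your proposal is correct and essentially reproduces the paper's proof: the same split into $Ces_\infty$ and nontrivial $\widetilde{\mathit{X}}$ (handled via Propositions \ref{Ces_infty isometric copy of l8} and \ref{isometric l^infty in Tandori} together with Remark \ref{izometryczna kopia ell-nieskonczonosc a FPP}) and the weighted-$L^1$ identifications of Example \ref{Cesaro i Copson dla L1 i L8} combined with Alspach's result, your change-of-variables paragraph merely fleshing out the isometry $L^1(w)\cong L^1[0,1]$ that the paper asserts without proof. The only slip is that for the weight $1/t$ the function $\phi(t)=\mu([0,t])=\int_0^t \frac{\mathrm{d}s}{s}$ is identically $+\infty$, so the substitution must be anchored at an interior point $a>0$ (e.g.\ $\phi(t)=\ln(t/a)$) or restricted to a subinterval such as $[a,ea]$ -- which is exactly the fallback you already state, so the argument stands.
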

%%%%%%%%%%
\begin{proof}
Since $Ces_{1}[0,1]\equiv L^{1}(\ln (1/t))[0,1]$ (see Example \ref{Cesaro i Copson dla L1 i L8} (b)), 
$L^{1}(\ln (1/t))[0,1]$ is isometric to $L^{1}[0,1]$ and $L^{1}[0,1]\notin (wFPP)$ by the Alspach 
result \cite{Al81}, so $Ces_{1}[0,1]\notin (wFPP)$.
	
Arguing in the same way, $Cop_1 \equiv L^1$ and $Cop_\infty \equiv L^1(1/t)$ (see Example \ref{Cesaro 
i Copson dla L1 i L8} (c)) also fail the weak fixed point property.
	
If $\widetilde{\mathit{X}} \neq \{ 0 \}$, then the claim follows from Proposition \ref{isometric l^infty 
in Tandori} and Remark \ref{izometryczna kopia ell-nieskonczonosc a FPP}. For the space $Ces_\infty$ 
we apply Proposition \ref{Ces_infty isometric copy of l8} and Remark 
\ref{izometryczna kopia ell-nieskonczonosc a FPP}, respectively.
\end{proof}

%%%%%%%%%%%%%%%%. Sec 8
\section{Generalizations and applications of results}
%%%%%%%%%%%%%%

Until now, most of the results we have obtained for the Ces\`aro and Copson function spaces 
has been proven in the class of Banach function spaces and under the assumption about 
the nontriviality. It turns out that using previously developed methods we can transfer (without 
much effort) the most important results from Sections \ref{sekcja kopii l1} and \ref{sekcja FPP} 
to even more general optimal domains. We start with some definitions.
	
Denote by $\mathcal{H}_w$ the {\it weighted Ces\`aro operator} which is defined as
$$
\mathcal{H}_w \colon f \mapsto \mathcal{H}_w f(x) := w(x) \int_0^x f(t) \mathrm{d}t 
\quad \text{for} \quad t \in I,
$$
where $w$ is a positive weight on $I$. For a Banach function spaces $X$ on $I$ by the 
{\it weighted Ces\`aro function space} $C_{w}X(I) = C_wX$ space we mean
$$ 
C_{w}X := \{ f\in L^0 : \mathcal{H}_w\abs{f} \in X \} \quad \text{with the norm} 
\quad \norm{f}_{C_{w}X} = \norm{\mathcal{H}_w\abs{f}}_X.
$$
These spaces for $X = L^p$, where $1 \leq p < \infty$, were studied by Kami\'nska--Kubiak \cite{KK12} 
and by Kubiak in \cite{Ku14}. Observe, that the study of the spaces $C_{p,w} := \mathcal{H}_w L^p$ 
is more or less equivalent to study of the spaces $CL^p(w)$, that is, the Ces\`aro operator on 
the weighted $L^p$-spaces. Of course, if we take $w(x) = 1/x$ then $C_{w}X \equiv CX$. Moreover, 
if $w \equiv 1$ then $\mathcal{H}_w = V$, where $V$ denote the {\it Volterra operator}
\begin{equation*}
V \colon f\mapsto Vf(x):=\int_{0}^{x}f(t)\mathrm{d}t \quad \text{for} \quad t \in I.
\end{equation*}
Easy computations involving the Fubini's theorem shows that a conjugate operator $\mathcal{H}^*_w$ 
to the weighted Ces\`aro operator $\mathcal{H}_w$ is given by the formula
$$
\mathcal{H}^*_w \colon f \mapsto \mathcal{H}^*_w f(x) := \int_{I \cap [x, \infty)} w(t) f(t) \mathrm{d}t 
\quad \text{for} \quad t \in I.
$$
The space $C_w^*X(I) = C_w^*X$ associated with this operator can be called the {\it weighted 
Copson function space}. Again, if $w(t) = 1/t$ then $C_w^*X \equiv C^*X$ and if $w(t) \equiv 1$ 
then $C_w^*X \equiv V^*X$.
 	
Note that
\begin{equation} \label{C_wX = CY}
\norm{f}_{C_wX} = \norm{ w(x) \int_0^x \abs{f(t)} \mathrm{d}t}_X = \norm{C\abs{f}}_{X(v)} = \norm{f}_{CY},
\end{equation}
that is, $C_wX \equiv CY$, where $Y = X(v)$ and $v(x) := xw(x)$. That is why it is easy to transfer 
claims about the Ces\`aro spaces $CX$ for $X$ being a Banach function space (rather not symmetric) 
to the spaces $C_wX$. Moreover, $C_wX \equiv V(X(w))$.
 	
It is easy to see that the space $C_wX$ is nontrivial if and only if $w(x)\chi_{[\lambda_0, m(I))}(x) \in X$ 
for some $0 < \lambda_0 < m(I)$, cf. (\ref{C_wX = CY}) and \cite[Theorem 1 (a) and (b)]{LM15a}.
Furthermore, if $C_w^*X$ is nontrivial then $\chi_{[0, \lambda_0]} \in X$ for some $0 < \lambda_0 < m(I)$, 
cf. Lemma \ref{Copson nontrivial}. Keeping in mind this observation and following the proofs of 
Lemma \ref{complementedL1} and Lemma \ref{complemented L1 in C*X} we can show that
 	
 %%%%%%%%%%%%. Lemma 8.1	
\begin{lemma} \label{kopie L1 uogolnione C i C*}
{\it Let $X$ be a Banach function space on $I$.
\begin{enumerate}
\item [(i)] Assume that $C_wX(I) \neq \{ 0 \}$. Then there exist $0 \leq \lambda_0 \in I$ with
\begin{equation}
\left\Vert w(x)\chi _{\lbrack b,m(I))}(x)\right\Vert_{X(I)}\left\Vert f\right\Vert _{L^{1}[a,b]}
\leq \left\Vert f\right\Vert_{C_wX(I)}\leq \left\Vert w(x)\chi _{\lbrack a,m(I))}(x)\right\Vert_{X(I)}\left\Vert f\right\Vert _{L^{1}[a,b]},
\end{equation}
for all $f \in C_wX(I)$ such that $\supp(f) \subset [a,b]$, where $0 \leq \lambda_0 < a < b < m(I)$.
\item [(ii)] If $C^*_wX(I) \neq \{ 0 \}$, then we can find $0 < \eta_0 \in I$ with
\begin{equation}
\left\Vert \chi _{[0,a]}\right\Vert_{X(I)}\left\Vert f\right\Vert _{L^{1}(w)[a,b]}
\leq \left\Vert f\right\Vert_{C^*_wX(I)}\leq \left\Vert \chi _{[0,b]}\right\Vert_{X(I)}\left\Vert f\right\Vert _{L^{1}(w)[a,b]},
\end{equation}
for all $f \in C^*_wX(I)$ such that $\supp(f) \subset [a,b]$, where $0 < a < b < \eta_0 \leq m(I)$.
\end{enumerate}
 }
\end{lemma}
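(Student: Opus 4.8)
The plan is to adapt the proofs of Lemma~\ref{complementedL1} and Lemma~\ref{complemented L1 in C*X} essentially verbatim, replacing the Ces\`aro operator $C$ by the weighted Ces\`aro operator $\mathcal{H}_w$ in part~(i) and the Copson operator $C^*$ by the weighted Copson operator $\mathcal{H}_w^*$ in part~(ii). The only genuinely new inputs are the nontriviality criteria recorded just before the statement: $C_wX \neq \{0\}$ yields $w\,\chi_{[\lambda_0,m(I))} \in X$ for some $0 \leq \lambda_0 < m(I)$, and $C^*_wX \neq \{0\}$ yields $\chi_{[0,\eta_0]} \in X$ for some $0 < \eta_0 \leq m(I)$. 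These provide the $\lambda_0$ and $\eta_0$ asserted in the statement; everything else reduces to a pointwise analysis of $\mathcal{H}_w\abs{f}$ and $\mathcal{H}_w^*\abs{f}$ for $f$ supported in a compact subinterval of $I$.

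For part~(i) I would fix any $a,b$ with $\lambda_0 < a < b < m(I)$ and take $f \in C_wX$ with $\supp(f) \subset [a,b]$. Since $\int_0^x \abs{f(t)}\,\mathrm{d}t$ vanishes for $x<a$, is nondecreasing on $[a,b]$, and equals the constant $\norm{f}_{L^1[a,b]}$ for every $x \geq b$, multiplying by $w(x) \geq 0$ gives the two-sided pointwise estimate
\[
w(x)\norm{f}_{L^1[a,b]}\chi_{[b,m(I))}(x) \leq w(x)\int_0^x \abs{f(t)}\,\mathrm{d}t \leq w(x)\norm{f}_{L^1[a,b]}\chi_{[a,m(I))}(x).
\]
Applying the (monotone, ideal) norm of $X$ and recalling $\norm{f}_{C_wX} = \norm{\mathcal{H}_w\abs{f}}_X$ yields the desired inequalities, provided $w\,\chi_{[a,m(I))} \in X$; but this follows from the nontriviality criterion and the ideal property, since $w\,\chi_{[a,m(I))} \leq w\,\chi_{[\lambda_0,m(I))}$ and the majorant belongs to $X$ (and likewise for $b$). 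Note that, unlike the reduction in Lemma~\ref{complementedL1}, this direct computation delivers the estimate for the whole range $\lambda_0 < a < b < m(I)$ at once. Alternatively, part~(i) is immediate from Lemma~\ref{complementedL1} applied to $Y = X(v)$ with $v(x)=xw(x)$, using the identification $C_wX \equiv CY$ from~(\ref{C_wX = CY}) together with $\norm{\tfrac{1}{x}\chi_{[a,m(I))}}_{Y} = \norm{w\,\chi_{[a,m(I))}}_X$.

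Part~(ii) is the dual computation, for which no equally clean reduction to an unweighted Copson space is available, so the direct argument is the cleanest route. Fixing $a,b$ with $0 < a < b < \eta_0$ and $f \in C^*_wX$ with $\supp(f)\subset[a,b]$, the function $\mathcal{H}_w^*\abs{f}(x) = \int_{I\cap[x,\infty)} w(t)\abs{f(t)}\,\mathrm{d}t$ equals the constant $\norm{f}_{L^1(w)[a,b]}$ for $x \leq a$, is nonincreasing on $[a,b]$, and vanishes for $x>b$. Hence
\[
\norm{f}_{L^1(w)[a,b]}\chi_{[0,a]}(x) \leq \mathcal{H}_w^*\abs{f}(x) \leq \norm{f}_{L^1(w)[a,b]}\chi_{[0,b]}(x),
\]
and applying $\norm{\cdot}_X$ gives the claim, once we know $\chi_{[0,b]}\in X$; again this follows from $\chi_{[0,b]} \leq \chi_{[0,\eta_0]}$, the membership of the latter, and the ideal property.

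Since the argument is a direct transcription of the two model proofs, I do not expect a serious obstacle. The only points requiring care are bookkeeping: verifying that the majorants $w\,\chi_{[a,m(I))}$ and $\chi_{[0,b]}$ genuinely lie in $X$ (handled by the nontriviality criteria and the ideal property) and that $w$ being a positive weight preserves the pointwise monotonicity of the truncated integrals after multiplication. Both are routine, which is exactly why the text disposes of this lemma by referring back to Lemma~\ref{complementedL1} and Lemma~\ref{complemented L1 in C*X}.
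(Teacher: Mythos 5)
Your proposal is correct and coincides with the paper's intended argument: the paper disposes of this lemma by invoking the nontriviality criteria ($w\,\chi_{[\lambda_0,m(I))}\in X$, resp. $\chi_{[0,\eta_0]}\in X$) and ``following the proofs of Lemma \ref{complementedL1} and Lemma \ref{complemented L1 in C*X}'', which is exactly the pointwise two-sided estimate you carry out, with the majorants placed in $X$ via the ideal property. Your alternative route for part (i) through $C_wX \equiv CY$ with $Y = X(v)$, $v(x)=xw(x)$, is likewise the identification (\ref{C_wX = CY}) the paper itself records, so nothing here departs from the text.
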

%%%%%%%%%%%%% 	
 	
The next theorem, in the case of the weighted Ces\`aro function space $C_wX$, easily follows from 
the identification $C_wX \equiv CY$, where $Y = X(v)$ and $v(x) = xw(x)$ (cf. (\ref{C_wX = CY})), 
and Theorem \ref{AIC-l1}. On the other hand, if $T = \mathcal{H}_w^*$, it is sufficient to use the same 
argument as in the proof of Theorem \ref{AIC-l1 in C^*X} and Lemma \ref{kopie L1 uogolnione C i C*} 
(ii) instead of Lemma \ref{complemented L1 in C*X} (actually, the proof will be almost identical, 
because we can use the same function $G_X$). Summarizing the above discussion, we can obtain
 	
 %%%%%%%%%%% Thm 8.2
\begin{theorem} \label{AIC-l1 uogolnione C i C*}
{\it Let $X$ be a Banach function space on $I$ and $T = \mathcal{H}_w$ or $T = \mathcal{H}_w^*$.
Then the nontrivial space $TX$ contains an order asymptotically isometric copy of $\ell^1$.}
\end{theorem}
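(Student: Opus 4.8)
The plan is to split the statement according to the two operators and, in each case, reduce to the unweighted result already established, namely Theorem \ref{AIC-l1} for $T = \mathcal{H}_w$ and Theorem \ref{AIC-l1 in C^*X} for $T = \mathcal{H}_w^*$. The weighted Ces\`aro case will be an essentially free corollary of an isometric identification, while the weighted Copson case will require repeating the earlier argument, with the key point being that the relevant monotone function is literally unchanged.

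For $T = \mathcal{H}_w$ I would invoke the identification (\ref{C_wX = CY}), which states that $C_wX \equiv CY$ with $Y := X(v)$ and $v(x) := xw(x)$. Since $\equiv$ means the two spaces consist of the same functions with equal norms, they are the same normed space; in particular nontriviality of $C_wX$ is nontriviality of $CY$. Thus Theorem \ref{AIC-l1}, applied to the Banach function space $Y$, already produces an order asymptotically isometric copy of $\ell^1$ inside $CY = C_wX$, and we are done. The only thing to check is that $v$ is again a positive weight so that $Y = X(v)$ is a genuine Banach function space, which is immediate.

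For $T = \mathcal{H}_w^*$ the reduction is less transparent, so I would instead transcribe the proof of Theorem \ref{AIC-l1 in C^*X}, substituting Lemma \ref{kopie L1 uogolnione C i C*}(ii) in place of Lemma \ref{complemented L1 in C*X}. Nontriviality of $C_w^*X$ yields $\chi_{[0,\lambda_0]} \in X$ for some $0 < \lambda_0 \in I$, and the estimate from Lemma \ref{kopie L1 uogolnione C i C*}(ii),
\begin{equation*}
\norm{\chi_{[0,a]}}_X \norm{f}_{L^1(w)[a,b]} \leq \norm{f}_{C_w^*X} \leq \norm{\chi_{[0,b]}}_X \norm{f}_{L^1(w)[a,b]},
\end{equation*}
has outer factors $G_X(a) = \norm{\chi_{[0,a]}}_X$ and $G_X(b) = \norm{\chi_{[0,b]}}_X$, so the \emph{same} function $G_X$ governs everything; only the inner norm has changed from $L^1(1/t)$ to $L^1(w)$. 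Choosing a point of continuity $b_0$ of $G_X$, a sequence $b_n \downarrow b_0$, and the normalized blocks $h_n := \chi_{(b_{n+1},b_n)}/\norm{\chi_{(b_{n+1},b_n)}}_{C_w^*X}$ exactly as before, the telescoping computation delivers the lower bound $\sum_n \frac{G_X(b_0)}{G_X(b_n)}\abs{\alpha_n}$, and continuity of $G_X$ at $b_0$ forces $\theta_n := G_X(b_0)/G_X(b_n) \to 1$, hence $\varepsilon_n := 1-\theta_n \to 0$; the degenerate case $\text{card}(\Omega_b) > 1$ again gives an order isometric copy.

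I do not expect a genuine obstacle; the one place where the weight could, in principle, interfere is the cancellation of the inner norm in the telescoping ratio. This cancellation is guaranteed because $\norm{\chi_{(b_{n+1},b_n)}}_{L^1(w)}$ occurs identically in the numerator (from the lower estimate on $\norm{\sum \alpha_n h_n}_{C_w^*X}$) and in the denominator (through the normalization of $h_n$ via the upper estimate on $\norm{\chi_{(b_{n+1},b_n)}}_{C_w^*X}$), so it drops out irrespective of $w$. The substance of the proof is therefore the bookkeeping verifying that both $G_X$ and this cancellation are insensitive to replacing $1/t$ by $w$.
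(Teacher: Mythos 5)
Your proposal is correct and follows essentially the same route as the paper: the authors likewise dispatch the case $T = \mathcal{H}_w$ via the isometric identification $C_wX \equiv CY$ with $Y = X(v)$, $v(x) = xw(x)$, together with Theorem \ref{AIC-l1}, and handle $T = \mathcal{H}_w^*$ by rerunning the proof of Theorem \ref{AIC-l1 in C^*X} with Lemma \ref{kopie L1 uogolnione C i C*}(ii) replacing Lemma \ref{complemented L1 in C*X}, noting exactly as you do that the same function $G_X$ controls the argument. Your additional check that the inner norm $\norm{\chi_{(b_{n+1},b_n)}}_{L^1(w)}$ cancels in the telescoping ratio is a point the paper leaves implicit, but it is the right thing to verify.
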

%%%%%%%%%%% 
 
A similar result for the space $C_{p,w} := C_w L^p$, where $1 \leq p < \infty$, was obtained by 
Kubiak \cite[Theorem 5.1]{Ku14}.

Now, a direct consequence of Theorem \ref{AIC-l1 uogolnione C i C*} and the Dowling--Lenard--Turett 
result \cite{DLT01} (cf. proof of Theorem \ref{FPPCesaro}) is the following

%%%%%%%%%%% Thm 8.3
\begin{theorem} \label{uogolnione Ci C* nie maja FPP}
{\it Let $X$ be a Banach function space on $I$ and $T = \mathcal{H}_w$ or $T = \mathcal{H}_w^*$. 
Then the space $TX$ fails the fixed point property whenever it is nontrivial.}
\end{theorem}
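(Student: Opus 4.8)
The plan is to reduce Theorem~\ref{uogolnione Ci C* nie maja FPP} to the already-established Theorem~\ref{AIC-l1 uogolnione C i C*} together with the Dowling--Lennard result, exactly as in the proof of Theorem~\ref{FPPCesaro}. First I would recall the hypothesis: $X$ is a Banach function space on $I$, the operator $T$ is either the weighted Ces\`aro operator $\mathcal{H}_w$ or its conjugate $\mathcal{H}_w^*$, and the space $TX$ is nontrivial. The key structural input is that nontriviality of $TX$ guarantees, via Theorem~\ref{AIC-l1 uogolnione C i C*}, that $TX$ contains an order asymptotically isometric copy of $\ell^1$. This is the heart of the matter, and it has already been done in the preceding theorem, so here it can simply be invoked.

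Next I would apply the Dowling--Lennard theorem from \cite{DL97} (see also the Dowling--Lennard--Turett refinement \cite{DLT01}), which states that any Banach space containing an asymptotically isometric copy of $\ell^1$ fails to have the fixed point property for nonexpansive mappings. Since an order asymptotically isometric copy of $\ell^1$ is in particular an asymptotically isometric copy of $\ell^1$, the hypothesis of the Dowling--Lennard theorem is met by $TX$. Therefore $TX$ fails the fixed point property, which is precisely the assertion to be proved.

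The proof is thus a two-line composition: nontriviality $\Rightarrow$ (Theorem~\ref{AIC-l1 uogolnione C i C*}) an asymptotically isometric copy of $\ell^1$ in $TX$ $\Rightarrow$ (Dowling--Lennard) failure of the fixed point property. There is no genuine obstacle remaining, because all the analytic work --- the construction of the disjointly supported sequence producing the asymptotically isometric $\ell^1$ structure, which required the fine behaviour of the functions $F_X$ and $G_X$ --- has been absorbed into the earlier theorem. The only thing to be careful about is the uniform treatment of the two cases $T = \mathcal{H}_w$ and $T = \mathcal{H}_w^*$, but since Theorem~\ref{AIC-l1 uogolnione C i C*} already covers both operators simultaneously, a single appeal handles them at once. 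One writes:

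\begin{proof}
By Theorem~\ref{AIC-l1 uogolnione C i C*}, the nontrivial space $TX$, where $T = \mathcal{H}_w$ or $T = \mathcal{H}_w^*$, contains an order asymptotically isometric copy of $\ell^1$, hence in particular an asymptotically isometric copy of $\ell^1$. By the Dowling--Lennard result \cite{DL97} (cf. \cite{DLT01} and the proof of Theorem~\ref{FPPCesaro}), a Banach space containing an asymptotically isometric copy of $\ell^1$ fails to have the fixed point property. Consequently, $TX$ fails the fixed point property.
\end{proof}
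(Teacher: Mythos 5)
Your proposal is correct and matches the paper's argument exactly: the paper introduces this theorem as ``a direct consequence of Theorem \ref{AIC-l1 uogolnione C i C*} and the Dowling--Lennard--Turett result \cite{DLT01} (cf. proof of Theorem \ref{FPPCesaro})'', which is precisely your two-step composition. Nothing is missing, and your observation that Theorem \ref{AIC-l1 uogolnione C i C*} handles both operators $\mathcal{H}_w$ and $\mathcal{H}_w^*$ at once is exactly how the paper treats the two cases.
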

%%%%%%%%%%%%	

As a direct application of the above considerations we can formulate the following

%%%%%%%%%%% Corr 8.4	
\begin{corollary} \label{VX nie ma FPP}
{\it The nontrivial Volterra space $VX$ contains an asymptotically isometric copy of $\ell^1$ and, 
consequently, fails the fixed point property.}
\end{corollary}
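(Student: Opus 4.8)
The plan is to recognise that the Volterra space $VX$ is merely the weighted Ces\`aro function space $C_wX$ for the constant weight $w \equiv 1$, and then to read off both assertions from the general results already established for $\mathcal{H}_w$. First I would recall that the weighted Ces\`aro operator $\mathcal{H}_w$ specialises to the Volterra operator $V$ precisely when $w \equiv 1$, so that $VX = C_wX$ with $w \equiv 1$ and $\norm{f}_{VX} = \norm{V\abs{f}}_X = \norm{\mathcal{H}_w\abs{f}}_X$. In particular, any Banach function space $X$ with $VX \neq \{0\}$ satisfies the hypotheses of Theorem \ref{AIC-l1 uogolnione C i C*} in the case $T = \mathcal{H}_w$, $w \equiv 1$ (no further structural assumption on $X$ is needed there).

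Invoking that theorem immediately yields that the nontrivial space $VX$ contains an order asymptotically isometric copy of $\ell^1$, which gives the first claim; an order asymptotically isometric copy is, \emph{a fortiori}, an asymptotically isometric copy, so the weaker statement in the corollary follows at once.

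For the second claim I would simply appeal to Theorem \ref{uogolnione Ci C* nie maja FPP} in the same specialisation $T = \mathcal{H}_w$, $w \equiv 1$; alternatively, one combines the first claim with the Dowling--Lennard theorem \cite{DL97}, according to which any Banach space containing an asymptotically isometric copy of $\ell^1$ fails the fixed point property. Either route gives $VX \notin (FPP)$.

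The point worth stressing is that there is essentially no technical obstacle to overcome here: the entire substance of the corollary has already been absorbed into the weighted statements of the two preceding theorems, and all that remains is the elementary bookkeeping identity $V = \mathcal{H}_1$ together with the accompanying renorming $\norm{\cdot}_{VX} = \norm{\mathcal{H}_w\,\cdot}_X$. The only hypothesis to be checked, namely nontriviality of $VX$, is assumed outright, so the corollary follows by direct specialisation of the general weighted results.
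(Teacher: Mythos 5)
Your proposal is correct and matches the paper exactly: the paper presents this corollary as a direct specialisation of Theorem \ref{AIC-l1 uogolnione C i C*} and Theorem \ref{uogolnione Ci C* nie maja FPP} to the case $T = \mathcal{H}_w$ with $w \equiv 1$, which is precisely the identification $V = \mathcal{H}_1$ you use. Your added remark that an order asymptotically isometric copy of $\ell^1$ is \emph{a fortiori} an asymptotically isometric copy, and the alternative route via the Dowling--Lennard result \cite{DL97}, are both consistent with the paper's reasoning.
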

%%%%%%%%%%

Finally, let us mention also that $Vol_1 := VL^1 \equiv L^1(1-t)$ and $Vol_\infty := VL^\infty \equiv L^1$ 
(cf. Example \ref{Cesaro i Copson dla L1 i L8} (b) and (c)) and consequently
	
%%%%%%%%%%% Thm 8.5	
\begin{theorem}
{\it The spaces $Vol_1$ and $Vol_\infty$ fail the weak fixed  point property.}
\end{theorem}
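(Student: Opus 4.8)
The plan is to reduce both claims to the single fact that $L^1[0,1]$ fails the weak fixed point property, which is the Alspach result \cite{Al81}, exactly in the spirit of the treatment of $Ces_1[0,1]$, $Cop_1$ and $Cop_\infty$ in Proposition \ref{Proposition wFPP}. All the genuine content has in fact already been assembled in the identifications recorded immediately before the statement, namely $Vol_\infty \equiv L^1$ and $Vol_1 \equiv L^1(1-t)$ (cf. Example \ref{Cesaro i Copson dla L1 i L8} (b) and (c)), so the proof is essentially a matter of transporting Alspach's theorem along these identifications.

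First I would dispose of $Vol_\infty$, where nothing remains to be done: since $Vol_\infty \equiv L^1[0,1]$ (with, in fact, equal norms) and $L^1[0,1] \notin (wFPP)$, the space $Vol_\infty$ fails the weak fixed point property as well, because that property passes between spaces which coincide as Banach spaces (weakly compact convex sets and nonexpansive maps are the same on both sides).

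For $Vol_1 \equiv L^1(1-t)[0,1]$ the only extra ingredient is the observation that the weighted space $L^1(w)[0,1]$ with $w(t)=1-t$ is isometric, in the sense of $\cong$, to $L^1[0,1]$. Indeed, $w$ is a.e.\ positive and integrable on $[0,1]$, so $\mathrm{d}\mu := w\,\mathrm{d}m$ is a finite nonatomic measure; concretely, the increasing, absolutely continuous change of variables $\psi(x) := 1-(1-x)^2$, which maps $[0,1]$ onto $[0,1]$ with $\psi'(x)=2(1-x)$, yields a linear bijection $f \mapsto f\circ\psi^{-1}$ satisfying $\norm{f\circ\psi^{-1}}_{L^1[0,1]} = 2\norm{f}_{L^1(1-t)[0,1]}$. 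Since the weak fixed point property is insensitive to multiplying the norm by a positive scalar and is preserved by surjective linear isometries, $L^1(1-t)[0,1]$ fails it together with $L^1[0,1]$, and hence so does $Vol_1$.

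There is essentially no serious obstacle here, since the load-bearing work lives in Example \ref{Cesaro i Copson dla L1 i L8} and in Alspach's theorem. The only step needing a line of verification is the isometry $L^1(1-t)\cong L^1$, which is precisely the same routine weighted-measure argument that already underlies the identification $Ces_1[0,1] \equiv L^1(\ln(1/t))[0,1]$ used in Proposition \ref{Proposition wFPP}; one could even cite that proposition as the template rather than spelling out the change of variables.
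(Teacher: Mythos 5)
Your proof is correct and follows essentially the same route as the paper, which deduces the theorem directly from the identifications $Vol_\infty \equiv L^1$ and $Vol_1 \equiv L^1(1-t)$ together with Alspach's result, exactly as in the proof of Proposition \ref{Proposition wFPP} for $Ces_1[0,1] \equiv L^1(\ln(1/t))$, $Cop_1$ and $Cop_\infty$. Your only addition is to spell out the change of variables $\psi(x)=1-(1-x)^2$ witnessing that $L^1(1-t)[0,1]$ is (up to the harmless scalar $2$, removable by replacing $f\circ\psi^{-1}$ with $\tfrac12 f\circ\psi^{-1}$) isometric to $L^1[0,1]$, a step the paper leaves implicit; note only that the paper's symbol $\cong$ is reserved for isometries of the form $\lambda\cdot\mathrm{id}$, so your composition operator should not be labelled with it.
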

%%%%%%%%

%%%%%%%%%%%%%%%
\section{Appendix}
%%%%%%%%%%%%%%

In the proof of Theorem \ref{as kopia l1} it was necessary for the function $F_{X}$ to be continuous in at least one point.
It turned out that it is always continuous in uncountably many points. However, the question whether this function is actually
continuous on the whole domain may be of independent interest.

%%%%%%%%%%%% Lemma 9.1
\begin{lemma} \label{Function F}
{\it Let $X$ be a Banach function space on $I$ such that the operator $C$ is bounded on $X$. 
Assume that one of the following holds true
\begin{enumerate}
\item [(i)] the space $X$ is order continuous,
\item [(ii)] the space $X$ is symmetric and $X \not\hookrightarrow L^\infty$.
\end{enumerate}
Then the function $F_{X}$ is finitely valued and continuous for all $0 < x \in I$.}
\end{lemma}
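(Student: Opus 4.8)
The plan is to get finiteness for free from Theorem D and then to reduce continuity to a single norm estimate on a short interval, so that the two alternative hypotheses (i) and (ii) only enter at the last step. Since $C$ is bounded on $X$, Theorem D (parts (iii)--(iv)) gives $\frac{1}{x}\chi_{[\lambda,m(I))}\in X$ for every $\lambda>0$; on $[0,1]$ this follows from $\chi_{[\lambda,1]}\in X$ together with the ideal property, because $\frac{1}{x}\chi_{[\lambda,1]}\leq\frac{1}{\lambda}\chi_{[\lambda,1]}$. Writing $g_\lambda:=\frac{1}{x}\chi_{(\lambda,m(I))}$, this says $F_X(\lambda)=\norm{g_\lambda}_X<\infty$ for all $\lambda>0$. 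As $\lambda$ increases the support of $g_\lambda$ shrinks, so $g_\lambda$ decreases pointwise and $F_X$ is nonincreasing; in particular its one-sided limits exist everywhere and it is enough to prove left- and right-continuity at a fixed $\lambda_0>0$.

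The heart of the argument is the estimate: for $0<\lambda<\mu$ in $I$ one has $g_\lambda-g_\mu=\frac{1}{x}\chi_{(\lambda,\mu]}\geq 0$, hence
\[
0\leq F_X(\lambda)-F_X(\mu)=\norm{g_\lambda}_X-\norm{g_\mu}_X\leq\norm{g_\lambda-g_\mu}_X=\norm{\tfrac{1}{x}\chi_{(\lambda,\mu]}}_X.
\]
Applying this with $\lambda=\lambda_0$ (and $\mu\downarrow\lambda_0$) and with $\mu=\lambda_0$ (and $\lambda\uparrow\lambda_0$), continuity at $\lambda_0$ reduces entirely to showing that $\norm{\frac{1}{x}\chi_J}_X\to 0$ as the interval $J$ shrinks to $\lambda_0$; by monotonicity of $F_X$ it suffices to test this along monotone sequences, i.e. for $J_n=(\lambda_0,\mu_n]$ with $\mu_n\downarrow\lambda_0$ and for $J_n=(\lambda_n,\lambda_0]$ with $\lambda_n\uparrow\lambda_0$.

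At this point I would split according to the hypothesis. Under (i), fix any $\lambda_1\in(0,\lambda_0)$; for $J\subset(\lambda_1,m(I))$ we have $\frac{1}{x}\chi_J=g_{\lambda_1}\chi_J$ with $g_{\lambda_1}\in X$, and since the sets $J_n$ decrease to a null set, order continuity of $X$ gives $\norm{g_{\lambda_1}\chi_{J_n}}_X\to 0$ directly. Under (ii), on any interval $J$ near $\lambda_0$ we have $\frac{1}{x}\leq\frac{2}{\lambda_0}$, so, using the symmetry of $X$, $\norm{\frac{1}{x}\chi_J}_X\leq\frac{2}{\lambda_0}\norm{\chi_J}_X=\frac{2}{\lambda_0}\,\phi_X(\abs{J})$, where $\phi_X(t):=\norm{\chi_{[0,t]}}_X$ is the fundamental function. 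Thus in case (ii) everything reduces to the claim $\phi_X(0^+)=0$.

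The main obstacle is precisely this last reduction, which is the only place where $X\not\hookrightarrow L^\infty$ is used; concretely I must show that for a symmetric space $\phi_X(0^+)>0$ forces $X\hookrightarrow L^\infty$. I would argue by rearrangements: if $\phi_X(0^+)=c>0$, then for every $f\in X$ and every $t>0$, monotonicity of $f^*$ gives $f^*(t)\chi_{[0,t]}\leq f^*$, whence $f^*(t)\phi_X(t)=\norm{f^*(t)\chi_{[0,t]}}_X\leq\norm{f^*}_X=\norm{f}_X$ and so $f^*(t)\leq\norm{f}_X/c$. Letting $t\to 0^+$ yields $\norm{f}_\infty\leq\norm{f}_X/c$, i.e. $X\hookrightarrow L^\infty$. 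Since by hypothesis $X\not\hookrightarrow L^\infty$, we conclude $\phi_X(0^+)=0$, which closes the estimate and completes the proof of continuity in both cases.
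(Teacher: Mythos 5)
Your proof is correct, and while its overall skeleton (reverse triangle inequality plus the observation that $g_\lambda - g_\mu = \frac{1}{x}\chi_{(\lambda,\mu]}$ lives on a shrinking interval) matches the paper's, the way you finish is genuinely different. The paper takes an \emph{arbitrary} sequence $\lambda_n \to \lambda_0$, dominates the differences $h_n = \frac{1}{x}\chi_{(\min\{\lambda_0,\lambda_n\},\max\{\lambda_0,\lambda_n\})}$ by a single fixed majorant $H$ supported near $\lambda_0$, and concludes $\norm{h_n}_X \to 0$ by a dominated-convergence property once $H \in X_a$ is verified; in case (i) this is immediate since $X = X_a$, but in case (ii) it rests on the identification $X_a = X_b$ for symmetric spaces with $X \not\hookrightarrow L^\infty$, imported from \cite{KT17}. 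You instead use monotonicity of $F_X$ to reduce continuity to one-sided monotone sequences, so that in case (i) the set-based definition of order continuity applies verbatim (modulo the trivial point that $\bigcap_n(\lambda_n,\lambda_0] = \{\lambda_0\}$ is a null set rather than empty --- pass to the open intervals $(\lambda_n,\lambda_0)$, which represent the same elements of $L^0$), and in case (ii) you bypass the $X_a$ machinery entirely: symmetry converts the estimate into $\norm{\frac{1}{x}\chi_{J_n}}_X \leq \frac{2}{\lambda_0}\phi_X(\abs{J_n})$, and you prove from scratch, via the standard rearrangement inequality $f^*(t)\phi_X(t) \leq \norm{f}_X$, that $X \not\hookrightarrow L^\infty$ forces $\phi_X(0^+) = 0$ for a symmetric space. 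What your route buys is self-containedness in case (ii) (no appeal to $X_a = X_b$ or to an external result) together with a transparent account of exactly which feature of hypothesis (ii) is used, namely the vanishing of the fundamental function at the origin; what the paper's route buys is uniformity --- one dominated-convergence template handles both hypotheses and arbitrary non-monotone sequences at once, and the same template is then reused verbatim for the function $G_X$. Your treatment of finiteness, via Theorem D (iii)--(iv) and the ideal property, is essentially identical to the paper's.
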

%%%%%%%%%%%%%%%

\begin{proof}
Actually, the proof of Lemma \ref{complementedL1} shows that the function $F_X$ is finitely valued.
Moreover, $\supp(CX) = I$ because the Ces\`aro operator is bounded on $X$, see Theorem D.
It remains to prove that $F_X$ is also continuous. We will consider two situations.
	
Assume that $I=[0,1].$ Let us fix $0 < \lambda_{0} < 1$ and take a sequence 
$(\lambda_{n})_{n=1}^{\infty }\subset \lbrack 0,1]$ such that $\lambda_{n}\rightarrow \lambda _{0}$. 
We will show that 
\begin{equation} \label{zb}
F_{X}(\lambda _{n})\rightarrow F_{X}(\lambda _{0}),
\end{equation}
that is, the function $F_X$ is continuous on $(0,1)$. Note first that there exist 
$0 < \varepsilon < \min\{\lambda_0, 1 - \lambda_0 \}$ and $N\in \mathbb{N}$ such that 
\begin{equation} \label{ogr}
0 \leq \abs{f_{\lambda_n}(x) - f_{\lambda_0}(x)} = 
\frac{1}{x}\chi _{(\min \{\lambda_{0},\lambda _{n}\},\max \{\lambda _{0},\lambda _{n}\})}(x) \leq
\max\limits_{n\geq N}\{ \frac{1}{\lambda_0}, \frac{1}{\lambda_n} \} 
\chi _{(\lambda _{0}-\varepsilon ,\lambda _{0}+\varepsilon )}(x),
\end{equation}
for $n\geq N$ and $0<x\leq 1$. Put 
\begin{equation*}
h_{n} := \frac{1}{x}\chi _{(\min \{\lambda_{0},\lambda _{n}\},\max \{\lambda _{0},\lambda _{n}\})} 
\quad \text{and} \quad
H := \max\limits_{n\geq N}\{ \frac{1}{\lambda_0}, \frac{1}{\lambda_n} \} 
\chi _{(\lambda _{0}-\varepsilon ,\lambda _{0}+\varepsilon)}.
\end{equation*}
Then $h_{n}\rightarrow 0$ almost everywhere on $[0,1]$ as $n \rightarrow \infty$, and it follows 
from (\ref{ogr}) that $0 \leq h_{n} \leq H$. We claim that $H\in X_{a}$. In fact, if $X$ is a symmetric 
space and $X \not\hookrightarrow L^\infty$, then $X_a = X_b$, where $X_{b}$ is the closure in 
$X$ of the set of bounded functions supported in sets of finite measure (see, for example, 
Theorem B in \cite{KT17}). It is clear that $H$ is such a function and so $H \in X_{a}$. 
However, if $X$ is an order continuous Banach function space then the situation is a little bit 
different. Due to boundedness of the Ces\`aro operator and Theorem D, we can see that 
$\chi_{[\lambda,1]} \in X$ for all $0 < \lambda < 1$. Therefore, $H \in X = X_a$ and the claim 
follows. Just from the definition of order continuity and (\ref{ogr}) we obtain that
\begin{align*}
0 \leq \left\vert F_{X}(\lambda _{n})-F_{X}(\lambda _{0})\right\vert & =\left\vert
\left\Vert \frac{1}{x}\chi _{(\lambda _{n},1]}(x)\right\Vert _{X}-\left\Vert 
\frac{1}{x}\chi _{(\lambda _{0},1]}(x)\right\Vert _{X}\right\vert \\
& \leq \left\Vert \frac{1}{x}\chi _{(\lambda _{n},1]}(x)-\frac{1}{x}\chi_{(\lambda _{0},1]}(x)\right\Vert _{X} \\
& =\left\Vert \frac{1}{x}\chi _{(\min \{\lambda _{0},\lambda _{n}\},\max
\{\lambda _{0},\lambda _{n}\})}(x)\right\Vert _{X}=\left\Vert
h_{n}\right\Vert _{X}\rightarrow 0\quad \text{as }n\rightarrow \infty .
\end{align*}
This proves (\ref{zb}). In the missing case, when $\lambda_0 = 1$, the argument is essentially 
the same, so we will omit it
	
Now suppose that $I = [0,\infty )$. Note only that in this case $\frac{1}{x}\chi _{\lbrack \lambda ,\infty )}\left( x\right) \in X$ 
for each $0<\lambda \in I$ and $\chi_{[a,b]} \in X$ for each $0 < a < b < \infty$, see Theorem D.
Thus we can proceed as in the previous case.
\end{proof}

It is not surprising that we can prove analogous lemma also for the function $G_X$.

%%%%%%%%%%%%% Lemma 9.2
\begin{lemma} \label{ciaglosc G_X}
{\it Let $X$ be a Banach function space on $I$ such that the Copson operator is bounded 
on $X$. Assume that one of the following holds true
\begin{enumerate}
\item [(i)] the space $X$ is order continuous,
\item [(ii)] the space $X$ is symmetric and $X \not\hookrightarrow L^\infty$.
\end{enumerate}
Then the function $G_{X}$ is finitely valued and continuous for all $x \in I$.}
\end{lemma}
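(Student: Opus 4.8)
The plan is to mirror the proof of Lemma \ref{Function F} almost verbatim, replacing the role of $F_X$ by $G_X$ and the function $\frac{1}{x}\chi_{(\lambda,m(I))}$ by $\chi_{[0,\lambda]}$, and using boundedness of the Copson operator $C^*$ (rather than $C$) together with Corollary \ref{Copson bounded and embeddings} in place of Theorem D. First I would note that $G_X$ is finitely valued: since $C^*$ is bounded on $X$ we have $\supp(C^*X) = I$ by Corollary \ref{Copson bounded and embeddings}, and the proof of Lemma \ref{complemented L1 in C*X} shows that $\chi_{[0,\lambda]} \in X$ for all $0 < \lambda < m(I)$, so $G_X(\lambda) = \norm{\chi_{[0,\lambda]}}_X < \infty$.

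For continuity, I would fix $0 < \lambda_0 \in I$ and take an arbitrary sequence $\lambda_n \to \lambda_0$. The key estimate replacing (\ref{ogr}) is simply
\begin{equation*}
0 \leq \abs{\chi_{[0,\lambda_n]}(x) - \chi_{[0,\lambda_0]}(x)} = \chi_{(\min\{\lambda_0,\lambda_n\}, \max\{\lambda_0,\lambda_n\}]}(x) \leq \chi_{(\lambda_0 - \varepsilon, \lambda_0 + \varepsilon]}(x),
\end{equation*}
valid for all $n \geq N$ once $\varepsilon$ is chosen small and $N$ large enough so that $\lambda_n \in (\lambda_0 - \varepsilon, \lambda_0 + \varepsilon)$. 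Setting $h_n := \chi_{(\min\{\lambda_0,\lambda_n\}, \max\{\lambda_0,\lambda_n\}]}$ and the dominating function $H := \chi_{(\lambda_0 - \varepsilon, \lambda_0 + \varepsilon]}$, I would observe $h_n \to 0$ almost everywhere and $0 \leq h_n \leq H$. I would then show $H \in X_a$: under hypothesis (ii), when $X$ is symmetric with $X \not\hookrightarrow L^\infty$, one has $X_a = X_b$ (the closure of bounded functions on sets of finite measure, cf. Theorem B in \cite{KT17}), and $H$ is exactly such a function; under hypothesis (i) order continuity gives $X = X_a$ directly, and $H \in X$ because $\chi_{[0, \lambda_0 + \varepsilon]} \in X$ by the finiteness established above. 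Order continuity of $H$ then forces $\norm{h_n}_X \to 0$.

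The conclusion then follows from the reverse triangle inequality exactly as in Lemma \ref{Function F}:
\begin{align*}
0 \leq \abs{G_X(\lambda_n) - G_X(\lambda_0)} & = \abs{\norm{\chi_{[0,\lambda_n]}}_X - \norm{\chi_{[0,\lambda_0]}}_X} \\
& \leq \norm{\chi_{[0,\lambda_n]} - \chi_{[0,\lambda_0]}}_X = \norm{h_n}_X \rightarrow 0 \quad \text{as } n \rightarrow \infty.
\end{align*}
For $I = [0,\infty)$ the argument is identical, the only point being that boundedness of $C^*$ again guarantees $\chi_{[0,\lambda]} \in X$ for every $0 < \lambda < \infty$, so every relevant characteristic function lies in $X$ and the same domination works. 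I do not anticipate a genuine obstacle here: the proof is structurally dual to Lemma \ref{Function F} and the only mild care needed is in verifying $H \in X_a$ in the two cases, where the symmetric case leans on the identification $X_a = X_b$ and the order-continuous case is immediate. The essential mechanism — dominated sequences of shrinking characteristic functions tending to zero in order-continuous norm — is unchanged.
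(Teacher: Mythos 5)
Your proposal is correct and is precisely the argument the paper intends: its own proof of this lemma is a one-line instruction to repeat the proof of Lemma \ref{Function F} with Corollary \ref{Copson bounded and embeddings} supplying $\chi_{[0,\lambda]} \in X$, and your expansion (domination of $h_n = \abs{\chi_{[0,\lambda_n]} - \chi_{[0,\lambda_0]}}$ by $H = \chi_{(\lambda_0-\varepsilon,\lambda_0+\varepsilon]}$, the verification $H \in X_a$ via $X = X_a$ in case (i) and $X_a = X_b$ in case (ii), then the reverse triangle inequality) is exactly that adaptation. No gaps beyond those already implicit in the paper's own sketch.
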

%%%%%%%%%%
\begin{proof}
We proceed as in the proof of Lemma \ref{Function F} and use Corollary \ref{Copson bounded 
and embeddings} instead of the proof of Lemma \ref{complemented L1 in C*X}.
\end{proof}

If we replace the assumption that the operator $T$, where $T = C$ or $T = C^*$, is bounded 
on $X$ by the assumption that $TX \neq \{ 0 \}$, then using the same arguments as before 
we obtain that the function $F_X$ (resp. $G_X$) is finitely valued and continuous for all 
$x \in \interior(\supp(TX))$.

The above lemma does not exclude the possibility that the function $F_X$ is
continuous on $0 < x \in I$ but the space $X$ has trivial order continuous part.
In fact, it is rather common for spaces with $X_a$ being trivial to own this
property. We will now give some examples illustrating the discussion about the continuity
of the function $F_X$.

%%%%%%%%%%%% Ex 9.3
\begin{example} \label{przyklad do F_X}
(a) Let $X$ be a Banach function space on $I$ and $w_0, w_1 \colon I \rightarrow (0,\infty)$ be 
two weights that differ only on the set of measure zero, i.e., $m(\{x\in I \colon w_0(x) \neq w_1(x) \}) = 0$.
Then, of course, $X(w_0) \equiv X(w_1)$. In particular, if $X$ satisfies the assumptions of 
Lemma \ref{Function F} and $w_1 = \mathfrak{D}$, where
$$
\mathfrak{D} \colon I \ni x \mapsto \mathfrak{D}(x) := \chi_{I/ \mathbb{Q}}(x),
$$
is a Dirichlet function, then $X \equiv X(\mathfrak{D})$ and $\mathfrak{D}$ is nowhere continuous 
function on $I$ but $F_{X(\mathfrak{D})}$ is a continuous function for all $0 < x \in I$.
\smallskip 
	
(b) Put 
$$
w_2 \colon [0,1] \ni x \mapsto w_2(x) := 2\chi_{\mathfrak{C}}(x) + \chi_{[0,1]/\mathfrak{C}}(x),
$$
where $\mathfrak{C}$ is the Smith--Volterra--Cantor set (or the fat Cantor set). The set of 
discontinuities of $w_2$ is the set $\mathfrak{C}$, so it is uncountable and of positive measure.
However, the set of discontinuities of the function $F_Y$, where $Y := L^\infty(w_2)[0,1]$, is 
at most countable.
\smallskip 
	
(c) Let $(q_n) \subset \mathbb{Q} \cap [0,1]$ be a sequence of rational numbers and put 
$Z := L^\infty(w_3)[0,1]$, where
$$
w_3 \colon [0,1] \ni x \mapsto w_3(x) := \sum_{ \substack{q_n < x \\ q_n \in \mathbb{Q} \cap [0,1]}}^\infty 2^{-n}.
$$
Then the function $F_Z$ is discontinuous at every rational number from the interval $[0,1]$ and 
continuous elsewhere.
\smallskip 
	
(d) Let $X$ be a symmetric space such that $X \cong L^{\infty}$, i.e., the spaces $X$ and $L^\infty$ 
have the same elements and $\left\Vert f\right\Vert _{X} = A \left\Vert f\right\Vert _{L^{\infty }}$ for 
$f \in X$ and some constant $A > 0$. Then, with the same notation as in the proof of 
Lemma \ref{Function F}, we have
\begin{equation*}
\left\vert F_{X}(\lambda _{n})-F_{X}(\lambda _{0})\right\vert =\left\vert
\left\Vert \frac{1}{x}\chi _{(\lambda _{n}, m(I))}(x)\right\Vert _{X}-\left\Vert 
\frac{1}{x}\chi _{(\lambda _{0},m(I))}(x)\right\Vert _{X}\right\vert
=A\left\vert \frac{1}{\lambda _{n}}-\frac{1}{\lambda _{0}}\right\vert \rightarrow 0,
\end{equation*}
as $n \rightarrow \infty$. But this means that $F_X$ is continuous for $0 < x \in I$. Is also worth 
noting that if $X$ is a symmetric space on $[0,1]$ then the condition $X_a =\{ 0 \}$ is equivalent 
to $X=L^{\infty}[0,1]$, see \cite[Theorem B]{KT17}. However, in the class of Orlicz spaces 
the condition $(L^\Phi)_a = \{ 0 \}$ (that is, the Orlicz function $\Phi$ takes also infinite values) 
is equivalent to $L^\Phi \cong L^{\infty}[0,1]$. Moreover, for symmetric spaces on $[0, \infty)$ 
the aforementioned condition $X_a =\{ 0 \}$ is equivalent to $X \hookrightarrow L^\infty[0,\infty)$, 
see also \cite[Theorem B]{KT17}.
\smallskip 	
	
(e) Let $Y$ be a Banach function space on $I$ such that $Y \cong X \cap L^\infty$, where
$$
X \cap L^\infty := \{f \in L^0 \colon \norm{f}_Y := \max\{\norm{f}_X, \norm{f}_{L^\infty}\} < \infty \},
$$
and $X$ is an order continuous Banach function space on $I$. Of course, $Y_a = \{ 0 \}$ but we 
can prove that the function $F_{Y}$ is continuous for $0 < x \in I$ (we will give the sketch of 
the proof only for $I = [0,1]$ because the remaining case is the same). Indeed, we have
\begin{align*}
\left\vert F_{Y}(\lambda _{n})-F_{Y}(\lambda _{0})\right\vert & =\left\vert
\left\Vert \frac{1}{x}\chi _{(\lambda _{n},1]}(x)\right\Vert _{Y}-\left\Vert 
\frac{1}{x}\chi _{(\lambda _{0},1]}(x)\right\Vert _{Y}\right\vert \\
& = B \left\vert \left\Vert \frac{1}{x}\chi _{(\lambda _{n},1]}(x)\right\Vert
_{X \cap L^{\infty }}-\left\Vert \frac{1}{x}\chi _{(\lambda_{0},1]}(x)\right\Vert _{X \cap L^{\infty }}\right\vert,
\end{align*}
for some constant $B > 0$. Example \ref{przyklad do F_X} (d) above and Lemma \ref{Function F} 
implies that $F_{L^\infty}$ and $F_X$ are continuous functions for all $0 < x \in I$. Therefore, 
$F_Y = B \max\{F_{X}, F_{L^\infty} \}$ is also continuous for $0 < x \in I$ as a maximum of two 
continuous functions and the claim follows.
\smallskip 	
	
The same examples can be considered also in the context of the function $G_X$.
\end{example}

\end{document}